\newtheorem{theorem}{Theorem}[section]
\newtheorem{corollary}[theorem]{Corollary}
\newtheorem{definition}[theorem]{Definition}
\newtheorem{lemma}[theorem]{Lemma}
\newtheorem{proposition}[theorem]{Proposition}
\newtheorem{remark}[theorem]{Remark}
\DeclareMathOperator{\diff}{d}
\DeclareMathOperator{\pv}{pv}
\DeclareMathOperator{\sign}{sign}
\renewcommand\({\left(}
\renewcommand\){\right)}
\renewcommand\lVert{\left\Vert}
\renewcommand\rVert{\right\Vert}
\def\L#1{\left\langle {#1}\right\rangle}
\def\nc{\newcommand}
\def\be{\beta}
\def\lam{\lambda}
\def\ra{\rightarrow}
\def\la{\leftarrow}
\def\D{\la D\ra}
\nc\pa{\partial}
\nc\CC{\mathbb{C}}
\nc\RR{\mathbb{R}}
\nc\QQ{\mathbb{Q}}
\nc\ZZ{\mathbb{Z}}
\nc\NN{\mathbb{N}}
\def\a{\alpha}
\def\ba{\begin{align}}
\def\bad{\begin{aligned}}
\def\be{\begin{equation}}
\def\ea{\end{align}}
\def\ead{\end{aligned}}
\def\ee{\end{equation}}
\def\e{\eqref}
\def\dalpha{\diff \! \alpha}
\def\ddelta{\diff \! \delta}
\def\dt{\diff \! t}
\def\dtau{\diff \! \tau}
\def\dh{\diff \! h}
\def\ds{\diff \! s}
\def\dx{\diff \! x}
\def\dxi{\diff \! \xi}
\def\dy{\diff \! y}
\def\dz{\diff \! z}
\def\fract{\frac{\diff}{\dt}}
\def\defn{\mathrel{:=}}
\def\eps{\varepsilon}
\def\la{\left\vert}
\def\lA{\left\Vert}
\def\bla{\big\vert}
\def\blA{\big\Vert}
\def\le{\leq}
\def\les{\lesssim}
\def\mez{\frac{1}{2}}
\def\ra{\right\vert}
\def\rA{\right\Vert}
\def\bra{\big\vert}
\def\brA{\big\Vert}
\def\tdm{\frac{3}{2}}
\def\uq{\frac{1}{4}}
\def\xN{\mathbb{N}}
\def\xR{\mathbb{R}}
\begin{document}
\title{Endpoint Sobolev theory for the Muskat equation}
\author{Thomas Alazard}
\author{Quoc-Hung Nguyen}
\date{}

\setlength{\baselineskip}{5mm}

\begin{abstract}
This paper is devoted to the study of solutions with critical 
regularity for the two-dimensional Muskat equation. 
We prove that the Cauchy problem is well-posed on the endpoint Sobolev space of $L^2$ functions 
with three-half derivative in $L^2$. This result 
is optimal with respect to the scaling of the equation. 
One well-known difficulty is that one cannot define 
a flow map such that the lifespan is bounded from below on bounded subsets of this critical Sobolev space. 
To overcome this,  we estimate the solutions for a norm 
which depends on the initial data themselves, using 
the weighted fractional Laplacians introduced in our previous works. 
Our proof is the first in which a null-type structure is identified for 
the Muskat equation, allowing 
to compensate for the degeneracy of the parabolic behavior for large slopes.

\end{abstract}

\maketitle

\section{Introduction}
This is the main in a sequence of three papers on 
the Cauchy problem for the Muskat equation for solutions with critical regularity. 
The goal of the present paper is to obtain a definitive result, that is to solve the Cauchy problem 
in the endpoint Sobolev space 
$$
H^\tdm(\xR)=\left\{ f\in L^2(\xR)\,;\, \int_{\xR}\big(1+\la\xi\ra)^{3}\bla\hat{f}(\xi)\bra^2\dxi<+\infty\right\}\cdot
$$
The study of the Cauchy problem for the Muskat equation begun 
two decades ago, but with many roots in earlier works. 
Inspired by the analysis of 
free boundary flows, different approaches succeeded to establish 
local well-posedness results for smooth enough initial data (see~\cite{Yi2003,SCH2004,Ambrose-2004,Ambrose-2007,CG-CMP,CCG-Annals}). 
In the last several years, this problem was extensively studied. There are now many 
different proofs that the  
Cauchy problem is well-posed, locally in time, 
on the sub-critical Sobolev spaces $H^{s}(\xR)$ with 
$s>3/2$ (see~\cite{Cheng-Belinchon-Shkoller-AdvMath,CGSV-AIHP2017,DLL,Matioc2,Alazard-Lazar,Nguyen-Pausader}). 

Another important component of the background is 
that the Muskat equation has a compact formulation in terms of singular integrals 
(see Caflisch, Howison and Siegel~\cite{SCH2004}, C\'ordoba and Gancedo~\cite{CG-CMP}). 
Besides its esthetic aspect, it allows 
to apply tools at interface of harmonic analysis and nonlinear partial differential equations. 
One can think of the circle of methods centering around the study of the Hilbert transform and Riesz potentials, 
or Besov and Triebel-Lizorkin spaces, which can be said to be at the root of the present analysis. 
In  this direction, we are strongly influenced by many 
earlier works about global existence results in critical spaces under some smallness assumptions 
(see in particular~\cite{CCGRPS-JEMS2013,CCGRPS-AJM2016,Cameron,Cordoba-Lazar-H3/2,Gancedo-Lazar-H2}). 

Building upon the above mentioned papers, we develop here 
an approach which allows to resolve several difficult questions. In particular, 
the analysis presented here 
if the first in which a \textbf{null-type structure} is exhibited for the Muskat equation. 
This provides a substantial clarification of the structure of the nonlinearity, which 
is the key to compensate for 
the degeneracy of the parabolic behavior for large slopes. 

The results proven in this article 
are of two types: global existence results valid under a smallness assumption, or
local well-posedness results for arbitrary initial data. For our subject, the last are the most important.
To explain this, we first of all recall that 
Castro, C\'{o}rdoba, Fefferman, Gancedo and L\'{o}pez-Fern\'{a}ndez proved that the 
Cauchy problem is not well posed globally in time for some large data~(\cite{CCFG-ARMA-2013, CCFG-ARMA-2016, CCFGLF-Annals-2012}).
A profound consequence of this fact is that one cannot
define a flow map such that the lifespan is bounded from below on bounded subsets of $H^\tdm(\xR)$  
(otherwise, we would obtain a global existence result for any initial data in $H^\tdm(\xR)$ 
by a scaling argument; using that if $f=f(t,x)$ solves the Muskat equation, so does the function 
$\lambda^{-1}f(\lambda t,\lambda x)$).
This is one of the reasons why it is
rather surprising that the Cauchy problem is well-posed 
for \textbf{large data} on the endpoint Sobolev space $H^\tdm(\xR)$. 
The same situation appears for various dispersive equations, however here 
the problem is considerably more difficult since the Muskat equation 
is fully-nonlinear. 

The discussion in the previous paragraph shows that one cannot rely on  
classical Sobolev energy estimates; since the time of existence must depend 
on the profile of the initial data and not only on its 
Sobolev norm. The problem therefore arises of estimating the solutions of the Muskat equation
with a norm whose definition involves the initial data themselves.
In fact, it is already a challenging question to define these norms.
For that, we have introduced in~\cite{AN1} some \textbf{weighted fractional Laplacians} 
which are tailored to the fractional characteristic of the Muskat equation, following previous works in~\cite{BN18a,BN18b,BN18c,BN18d,Ng}. 
We will review the main properties of these operators in~\S\ref{S:wRp}.
We will push this idea further by introducing some time-dependent frequency decompositions
which also depend on the solution.

The study of the well-posedness of the Cauchy problem 
for the Muskat equation on the endpoint Sobolev space $H^\tdm(\xR)$ 
faces two other serious obstacles. 
Firstly, an obvious difficulty is that $H^{\tdm}(\xR)$ is not embedded in the space of Lipschitz functions. 
As a consequence, the spatial derivative is not controlled in $L^\infty$-norm. 
This raises interesting new difficulties since $L^\infty$-norms 
are ubiquitous in classical nonlinear estimates. 
Secondly, the Muskat equation is a \textbf{degenerate parabolic equation} for solutions which 
are not controlled in Lipschitz norm (this statement will be given a rigorous meaning in \S\ref{S:pMe}, 
see Remark~\ref{R:dpb}). As mentioned above, the only way to overcome this difficulty is 
to identify a null-type structure in the nonlinearity. 
In addition, one has to perform a paradifferential-type analysis 
which is compatible with both the weighted fractional Laplacians 
and this null-type structure. This is the main problem whose solution is the subject of this article.

We will come back to these three points in \S\ref{S:1.2} after having 
introduced some notations and stated our main result. 
In~\S\ref{S:1.2}, we will also compare the main results of this article with our previous ones.

\subsection{Main result}
The Muskat equation dictates the dynamics  
of the interface $\Sigma$ separating two fluids having 
different densities $\rho_1$ and~$\rho_2$, obeying Darcy's law and evolving under the restoring force of gravity. 
Hereafter we assume that $\rho_2>\rho_1$ (this corresponds 
to the stable physical case where the heaviest fluid is located underneath the free surface) and 
we further assume that $\rho_2-\rho_1=2$ (without loss of generality).

On the supposition that the fluids are two-dimensional 
and that the interface is a graph, $\Sigma$ is of the form
$$
\Sigma(t)=\left\{ (x,y)\in\xR\times\xR\,;\, y=f(t,x)\right\} \qquad (t\ge 0),
$$
where the free surface elevation $f$ is an unknown. 
In the C\'ordoba-Gancedo formulation~\cite{CG-CMP}, 
the evolution equation for $f$ simplifies to
\be\label{a1}
\partial_tf=\frac{1}{\pi}
\pv\int_\xR\frac{\partial_x\Delta_\alpha f}{1+\left(\Delta_\alpha f\right)^2}\dalpha,
\ee
where the integral is understood as a principal value 
integral\footnote{Since it is not immediately obvious that the right-hand 
side is well defined, 
we refer to the discussion in paragraph~\S\ref{S:3.4} 
where we recall that the Muskat equation can be written under the form 
$\partial_t f+\D f=\mathcal{T}(f)f$ where $\mathcal{T}(f)f$ is given 
by a well-defined integral.}, 
and $\Delta_\alpha f$ is a slope, defined by the quotient
\begin{align}\label{eq2.2}
\Delta_\alpha f(t,x)=\frac{f(t,x)-f(t,x-\alpha)}{\alpha}\cdot
\end{align}

Recall that the following notations for Sobolev and homogeneous Sobolev norms:
\begin{equation}\label{defi:Sobolev}
\begin{aligned}
\lVert u \rVert_{\dot{H}^{\sigma}}^{2} &\defn (2\pi)^{-1} 
\int_{\xR} \la\xi\ra^{2\sigma} |\hat{u}(\xi)|^2\dxi,\\
\lVert u \rVert_{H^\sigma}^{2} &\defn (2\pi)^{-1} 
\int_{\xR} (1+|\xi|^2)^{\sigma} |\hat{u}(\xi)|^2\dxi,
\end{aligned}
\end{equation}
where~$\widehat{u}$ is the Fourier transform of~$u$.

\begin{theorem}\label{Theorem}
$i)$ For any initial data $f_0$ in $H^{\tdm}(\xR)$, there exists a time $T>0$ such that 
the Cauchy problem for the Muskat equation~\e{a1} has a unique solution $f$ in the space
\begin{equation*}
X^\tdm(T)=\left\{ f\in C^0\big([0,T];  H^{\tdm}(\xR)\big)\,;\, 
\int_{0}^{T}\!\!\int_\xR \frac{(\partial_{xx}f)^2}{1+(\partial_xf)^2} \dx\dt<\infty\right\}.
\end{equation*}
$ii)$ Moreover, there exists a positive constant $\varepsilon_0$ such that, 
for any initial data $f_0$ in $H^{\frac32}(\xR)$ satisfying
\begin{equation*}
\lA f_0\rA_{\dot H^{\tdm}}\leq \varepsilon_0,
\end{equation*}
the Cauchy problem for~\e{a1} has a unique global solution 
$f$ in $X^\tdm(+\infty)$. 
\end{theorem}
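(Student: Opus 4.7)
The plan is to obtain both statements as consequences of a single a priori estimate on regularized solutions, complemented by a compactness argument for existence and an energy argument at lower regularity for uniqueness. Since the results invoked in the introduction forbid a lifespan depending only on $\lA f_0\rA_{H^\tdm}$, the central innovation must be a solution-adapted seminorm $\Phi$ built from the weighted fractional Laplacians of~\cite{AN1}. One chooses a frequency threshold and a weight that both depend on $f_0$ in such a way that the low-frequency part of $f_0$ is Lipschitz and absorbs its critical mass, while the high-frequency remainder is small enough in the critical seminorm to be swallowed by the parabolic dissipation. The lifespan will then depend on the profile of $f_0$ through the choice of this threshold and weight.

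First I would regularize~\eqref{a1}, for instance by adding a viscosity term $\eps\partial_{xx}f_\eps$, so that standard parabolic theory produces smooth solutions $f_\eps$ on a short time interval. The main task is then to prove a priori bounds on $f_\eps$ in the norm defining $X^\tdm(T)$ with a lifespan $T>0$ independent of $\eps$. Writing the equation as $\partial_t f+\D f=\mathcal{T}(f)f$ as in the footnote after~\eqref{a1}, the strategy is to test this evolution against a weighted fractional operator $\mathcal{L}(f_0)f$, the left-hand side producing the dissipative contribution
\begin{equation*}
\int_0^T\!\!\int_\xR \frac{(\partial_{xx}f)^2}{1+(\partial_xf)^2}\,\dx\dt
\end{equation*}
which is built into $X^\tdm(T)$. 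The hard part will be to expose a \emph{null-type} cancellation inside the nonlinear integrand $\partial_x\Delta_\alpha f/(1+(\Delta_\alpha f)^2)$: the commutators that arise when $\mathcal{L}(f_0)$ meets $\mathcal{T}(f)f$ must be rewritten so that each factor carries a slope-type smallness, so that the estimate closes without any $L^\infty$ control on $\partial_xf$. A time-dependent Littlewood-Paley decomposition adapted to the solution plays a decisive role here, since it replaces forbidden $L^\infty$ estimates by quantities involving the weight which tend to zero as the high-frequency cutoff is refined.

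Once an inequality of the form $\Phi(f(t))\le \Phi(f_0)+C\int_0^t P(\Phi(f))\,\ds$ is obtained with $P$ polynomial, a bootstrap yields a lifespan $T>0$ depending on the profile of $f_0$, and a compactness argument produces a limit $f\in X^\tdm(T)$, giving part $(i)$. Uniqueness follows from an $L^2$-type energy estimate on the difference of two solutions at a lower regularity where the nonlinearity becomes Lipschitz; this still uses the dissipation built into $X^\tdm(T)$ but does not require the full null-structure analysis. For part $(ii)$, the same framework applies under the smallness assumption $\lA f_0\rA_{\dot H^\tdm}\le \eps_0$: the adapted decomposition becomes trivial because $f_0$ itself plays the role of the small remainder, the nonlinear contributions are absorbed into the dissipation, and one closes $\Phi(f(t))\le \Phi(f_0)$ uniformly in $t$, yielding the global solution in $X^\tdm(+\infty)$.
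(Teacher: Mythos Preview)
Your outline captures the broad architecture---regularize, prove weighted a~priori estimates exploiting a null-type structure, pass to the limit, prove uniqueness at lower regularity---and this is indeed the paper's plan. But several steps are either misdescribed or missing in ways that matter.

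\textbf{The role of the weight.} You describe the weight as implementing a splitting of $f_0$ into a Lipschitz low-frequency piece plus a small high-frequency remainder. That is not the mechanism. The paper simply observes (Lemma~\ref{L:enhanced}) that any $f_0\in H^{3/2}$ lies in some strictly stronger space $\mathcal{H}^{3/2,\phi}$ with $\kappa\to\infty$; the point of the weight is not to make part of the data Lipschitz but to create a sub-critical margin. The frequency threshold $\lambda$ is a \emph{separate} device: it is chosen \emph{after} the energy inequality is derived, as a time-dependent function of the dissipation $B(t)$ (in fact $\lambda=B^{-1/2}$), and the two interact through the elementary bound $\min\{\lambda|\xi|,1\}^\beta\lesssim \kappa(|\xi|)/\kappa(1/\lambda)$.

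\textbf{How the estimate actually closes.} Your claimed inequality $\Phi(f(t))\le\Phi(f_0)+C\int_0^t P(\Phi)\,\diff s$ hides the real difficulty. The raw estimate has the form
\[
\fract A+B\lesssim (1+A^4)\frac{B}{\kappa(B^{1/50})}+\text{(sublinear in }B)+\mathcal{G}(A),
\]
where $A$ is the weighted $H^{3/2,\phi}$ norm and $B$ the degenerate dissipation. The right-hand side is \emph{not} polynomial in $A$ alone; it contains $B$ itself, and only the divergence $\kappa(r)\to\infty$ allows one to absorb this term into the left-hand side. Without identifying this structure you cannot explain why the argument closes for large data.

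\textbf{The null-type structure.} You say the commutators ``carry a slope-type smallness''. The precise content is that after commuting $|D|^{1,\phi}$ with $\mathcal{T}(f)$, with the transport term $W(f)\partial_x$, and with the remainder $R(f)$, every resulting bilinear form can be bounded by products of $\bigl\lVert f_{xx}/\langle f_x\rangle\bigr\rVert_{L^2}$ and $\bigl\lVert |D|^{2,\phi}f/\langle f_x\rangle\bigr\rVert_{L^2}$ rather than $\lVert f_{xx}\rVert_{L^2}$ (Propositions~\ref{X1}, \ref{P:5.6}, \ref{P:5.7}). This division by $\langle f_x\rangle$ is exactly what compensates the degeneracy of the parabolic smoothing.

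\textbf{Uniqueness and enhanced regularity.} Two corrections. First, the contraction estimate in $\dot H^{1/2}$ (Proposition~\ref{P:6.final}) does rely on the null-type commutator bounds of Propositions~\ref{X1} and~\ref{P:5.6}; it is not a soft argument. Second, that contraction estimate requires both solutions to lie in $C^0_t\mathcal{H}^{3/2,\phi}$ for some admissible $\phi$, which is \emph{stronger} than membership in $X^{3/2}(T)$. To obtain unconditional uniqueness in $X^{3/2}(T)$ the paper proves a separate propagation-of-regularity result (Proposition~\ref{P:6.1b}): any $X^{3/2}$ solution whose initial datum happens to lie in $\mathcal{H}^{3/2,\phi}$ automatically stays there on a short interval. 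Your outline omits this step entirely.

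\textbf{Minor points.} The paper regularizes with $\varepsilon|D|^8$, not $\varepsilon\partial_{xx}$, to have enough smoothing; and convergence of the approximants is obtained via the contraction estimate (they form a Cauchy sequence in $\dot H^{1/2}$, upgraded to $H^{3/2}$ by interpolation against the uniform weighted bound), not by compactness.
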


In the next paragraphs, 
we shall compare this result with previous ones for other critical problems. We shall also 
describe the general strategy of proof of Theorem~\ref{Theorem}, the difficulties one has to cope with, 
and the ideas used to overcome them. 

\subsection{Comparison with previous works}\label{S:1.2'}
The study of the well-posedness of the Cauchy problem 
for various partial differential equations 
has attracted a lot of attention in the last decades. 

For the Schr\"odinger equation, which 
is the prototypical example of a semi-linear dispersive 
equation, the study of Cauchy problem 
in the  energy critical case goes back to the works of Cazenave and Weissler~\cite{Cazanave-Weissler-1989,Cazanave-Weissler-1990} 
and culminates with the global existence results of 
Bourgain~\cite{Bourgain-1999}, Grillakis~\cite{Grillakis-2000} and 
Colliander, Keel, Staffilani, Takaoka and 
Tao~\cite{CKSTT-AoM}. By contrast with sub-critical problems, 
the time of existence given by the local theory in~\cite{Cazanave-Weissler-1989,Cazanave-Weissler-1990} 
depends on the profile of the data and not only on its energy. 
Consequently, the conservation of the energy is not sufficient to obtain global existence results. 
Detailed historical accounts of the 
subject can be found in~\cite{Tao-Book}. 
We also refer to the recent paper by Merle, Raph{\"a}el, 
Rodnianski and Szeftel~\cite{MRRS-2019NLS} which establishes 
an unexpected blow-up result for supercritical defocusing nonlinear Schr\"odinger equations. 
If instead of a semi-linear equation, one considers a quasi-linear problem, then 
the scaling is not necessarily the only relevant criteria. 
One key result in this direction is the resolution of the  bounded $L^2$ curvature conjecture by Klainerman, Rodnianski and Szeftel~\cite{KRS-invent}.

As for hyperbolic or dispersive equations, 
the study of critical problems for parabolic equations has a very rich litterature. 
Consider the equation
\begin{align*}
\partial_t\theta+u\cdot\nabla \theta+(-\Delta)^{\frac{\alpha}{2}}\theta=0\quad \text{with}
\quad u=\nabla^\perp(-\Delta)^{-\mez}\theta,
\end{align*}
which is a dissipative version of the classical surface quasi-geostrophic equation introduced by Constantin-Majda-Tabak~\cite{CMT-1994}. 
In the critical case $\alpha=1$, the global in time well-posedness 
has been proved by Kiselev-Nazarov-Volberg \cite{KNV-2007}, Caffarelli-Vasseur~\cite{Caffarelli-Vasseur-AoM} and 
Constantin-Vicol~\cite{CV-2012} (see also~\cite{KN-2009,Silvestre-2012,VaVi,NgYa}). 
For the Muskat equation, 
the nonlinear term is considerably more complicated. However, 
Cameron proved in~\cite{Cameron} that one can apply 
the method introduced by Kiselev-Nazarov-Volberg to prove 
the existence of global in time solutions 
whenever the product of the maximal and minimal slopes is less than $1$. 

For the Muskat equation, the study of the critical Sobolev problem was initiated by 
C\'ordoba and Lazar in~\cite{Cordoba-Lazar-H3/2}. They proved 
a global existence result 
for initial data $f_0$ in $H^{\frac52}(\xR)$ such that 
$(1+\lA \partial_x f_0\rA_{L^\infty}^4)\lA f_0\rA_{\dot{H}^{\tdm}}\ll 1$ (see also \cite{Gancedo-Lazar-H2,AN2}). 
Their main technical result is a key {\em a priori} estimate 
in the critical space $\dot{H}^\tdm(\xR)$, which reads:
\be\label{CL}
\fract \lA f\rA_{\dot H^{\frac{3}{2}}}^2
+ \int_\xR \frac{( \partial_{xx}f)^2}{1+(\partial_x f)^2} \dx\lesssim  \(\lA f\rA_{\dot H^{\frac{3}{2}}}
+\lA f\rA_{\dot H^{\frac{3}{2}}}^2\)
\lA f\rA_{\dot{H}^2}^2.
\ee
This estimate is easy to state, however its proof is very delicate. C\'ordoba and Lazar 
introduce in~\cite{Cordoba-Lazar-H3/2} 
a tricky reformulation of the Muskat equation in terms of oscillatory integrals, 
together with the systematic use of Besov spaces. 

For the sake of comparison, it might be useful for the reader to explain how 
to get a global existence result out of the inequality~\e{CL}. 
Here the idea is quite simple: one wants to absorb the right-hand side 
by the left-hand side. This requires 
to control from below the denominator $1+(\partial_xf)^2$. 
For that purpose, one uses an integration by parts argument that 
goes back to the work of C\'ordoba and Gancedo~\cite{CG-CMP2} 
(see also~\cite{Cameron,Gancedo-Lazar-H2}). 
This gives an estimate of the form (see~\cite{AN2})
$$
\fract \lA \partial_x f\rA_{L^\infty}\les \lA f\rA_{H^2}^2.
$$
By combining the latter inequality with \e{CL}, one obtains that 
there exists $c_0$ small enough, so that
\be\label{CL3}
\fract \lA f\rA_{\dot H^{\frac{3}{2}}}^2
+ c_0\int_\xR \frac{( \partial_{xx}f)^2}{1+(\partial_x f)^2} \dx\le 0,
\ee
which gives the wanted control of the $L^\infty_t(H^{3/2}_x)$-norm.

Eventually, we discuss the main result in our previous paper~\cite{AN2}, where 
we proved that the Cauchy problem is well-posed 
in the space $W^{1,\infty}(\xR)\cap H^\tdm(\xR)$ for arbitrary large data (together with 
a global in time regularity result under a smallness assumption in $\dot{H}^{\tdm}(\xR)$). 
The key difference between the former large data result and 
other known results for other critical parabolic problems 
is that we know that, for the Muskat equation, 
there exist some solutions with large data which do not exist globally in time 
(as proved by Castro, C\'{o}rdoba, Fefferman, 
Gancedo and L\'opez-Fern\'andez~(\cite{CCFG-ARMA-2013,CCFG-ARMA-2016,CCFGLF-Annals-2012}).  
Consequently, one cannot solve the 
Cauchy problem for a time $T$ which depends only on 
the norm of $f_0$ in $\dot W^{1,\infty}(\xR)\cap \dot H^{3/2}(\xR)$. 
Otherwise, this would yield a global existence result for any initial data 
by a scaling argument. 

As already said, the study of critical problems is a very vast subject of which we have 
barely scratched the surface in this paragraph. There are many related results. 
Let us mention papers by Vazquez~\cite{Vazquez-DCDS}, for 
the fractional porous media equation, as well as 
Granero-Belinch{\'o}n and Scrobogna~\cite{Granero-Scrobogna} and  Scrobogna~\cite{Scrobogna-HS} 
for quadratic approximation of the Hele-Shaw equation (with or without surface tension). 
We also refer the reader to 
Abedin and Schwab~\cite{Abedin-Schwab-2020}, who obtained recently 
the existence of a modulus of continuity for the Muskat equation 
by means of Krylov-Safonov estimates. 
Let us also mention that the existence and (possible) non-uniqueness 
of some weak-solutions has also been thoroughly studied 
(see~\cite{Brenier2009,cordoba2011lack,szekelyhidi2012relaxation,castro2016mixing,forster2018piecewise,noisette2020mixing}).

\subsection{Strategy of the proof}\label{S:1.2}
A key difficulty is that 
the coercive quantity which appears in the left-hand side of the C\'ordoba-Lazar's critical estimate~\e{CL}, that is
\be\label{co}
\int_0^T\int_\xR \frac{( \partial_{xx}f)^2}{1+(\partial_x f)^2} \dx\dt,
\ee
is insufficient to control the $\lA \cdot\rA_{L^2_t\dot{H}_x^2}$-norm of $f$. 
In sharp contrast with the previous works in~\cite{Cameron,Cordoba-Lazar-H3/2,Gancedo-Lazar-H2,AN2}, 
in this paper we make no extra assumption on the initial data which would allow us to control 
from above the denominator~$1+(\partial_xf)^2$. 

By comparison with the strategy recalled in the previous paragraph, 
this leads to a serious difficulty since the parabolic feature degenerates in this case. 
Thus a major ingredient in the proof of Theorem~\ref{Theorem}, 
and another object of the paper, is to prove that it is sufficient to control 
the quantity~\e{co}. To do so, we will uncover some key cancellations 
showing that one can divide the most singular terms by the possibly large factor 
$1+(\partial_x f)^2$. This is the 
null-type structure for the Muskat equation which will be systematically 
studied in the following.

In the rest of this introduction, we will explain how one can overcome 
the two main difficulties mentioned above; namely, the fact that the time 
of existence must depend on the initial data and the 
degeneracy in the parabolic behavior.

\subsubsection*{A weak null-type property}
Our first idea will consist in proving a (weak) null-type property. 
By this we mean the proof of an inequality of the form~\e{CL}
but with the 
key difference that the right-hand side does not involve the $\lA \cdot\rA_{\dot{H}^2}$-norm of~$f$, 
but the 
weaker coercive quantity given by~\e{co}. 

More precisely, we will prove that
\be\label{i10}
\fract \lA f\rA_{\dot H^{\frac{3}{2}}}^2
+ \int_\xR \frac{( \partial_{xx}f)^2}{1+(\partial_x f)^2} \dx\lesssim
\left(1+\Vert f\Vert_{\dot H^{\frac{3}{2}}}^{7}\right)
\Vert f\Vert_{\dot H^{\frac{3}{2}}} \int_\xR \frac{( \partial_{xx}f)^2}{1+(\partial_x f)^2} \dx.
\ee
This estimate will imply at once statement $ii)$ in Theorem~\ref{Theorem}, about 
the global existence result under a smallness assumption. Indeed, this part is 
perturbative in character, and one can absorb the right-hand side of \e{i10} by the left-hand side, 
provided that $\Vert f\Vert_{\dot H^{\frac{3}{2}}}$ is small enough. 

However, the estimate \e{i10} 
is insufficient to prove the main result, which is statement~$i)$ in Theorem~\ref{Theorem}.  
Indeed, as we have explained before, for the latter purpose, 
one cannot rely entirely on an estimate of the critical $\lA \cdot\rA_{\dot{H}^{\tdm}}$-norm. 
This will force us to prove a strong null-type property, whose proof is considerably more involved.

\subsubsection*{Weighted fractional Laplacians} Before going any further, we need 
to define the norm that we are going to estimate. To do so, we pause to 
introduce a weighted version of the classical fractional Laplacian.

Given a real number $s$ and a function $\phi\colon[0,+\infty)\to[1,+\infty)$ 
satisfying a doubling condition (this means that 
there is $c_0>0$ such that $\phi(2r)\leq c_0\phi(r)$ for any $r\geq 0$), 
we introduce the weighted fractional Laplacian $|D|^{s,\phi}$ 
as 
the Fourier multiplier with symbol $|\xi|^s\phi(|\xi|)$. More precisely,
\begin{equation*}
\mathcal{F}( |D|^{s,\phi}f)(\xi)=|\xi|^s\phi(|\xi|) \mathcal{F}(f)(\xi).
\end{equation*}
Moreover, we define the space
\be\label{defi:space}
\mathcal{H}^{s,\phi}(\xR)=\{ f\in L^2(\xR)\, :\, \D^{s,\phi}f\in L^2(\xR)\}.
\ee
A weighted fractional Laplacian $\D^{s,\phi}$, as the name indicates, attempts to be 
a slight modulation of the usual fractional Laplacian $\D^s=(-\partial_{xx})^{s/2}$.  
We will consider special 
weight functions $\phi$ depending 
on some extra functions $\kappa\colon [0,\infty)\to [1,\infty)$, of the form
\begin{equation}\label{ni10}
\phi(\lam)=\int_{0}^{\infty}\frac{1-\cos(h)}{h^2} \kappa\left(\frac{\lam}{h}\right) \dh, \quad \text{for }\lambda\ge 0.
\end{equation}
Let us clarify the assumptions on $\kappa$ which will be needed. 
We say that a function $\kappa\colon[0,+\infty)\to[1,+\infty)$ is 
an admissible weight if it satisfies the following conditions:
\begin{enumerate}[$({{\rm H}}1)$]
\item\label{iH1} $\kappa$ is increasing;
\item\label{iH2} 
there is a positive constant $c_0$ such that $\kappa(2r)\leq c_0\kappa(r)$ for any $r\geq 0$; 
\item\label{iH3} the function $r\mapsto \kappa(r)/\log(4+r)$ is decreasing on  $[0,\infty)$.
\end{enumerate}
The operator $\D^{s,\phi}$ have been introduced 
in~\cite{BN18a,BN18b,BN18d,Ng} in the special case $\phi(r)=(\log(2+r))^a$ 
for some $a\in [0,1]$. The general case has been introduced in~\cite{AN1}. 

In our first paper~\cite{AN1}, we studied the Muskat equation 
in the spaces $\mathcal{H}^{s,\log(\cdot+|\xi|)^a}(\xR)$ for some $a\ge \frac{1}{3}$, 
which permitted to solve the Cauchy problem for non-Lipschitz initial data, following earlier works 
by Deng, Lei and Lin~\cite{DLL}, Cameron~\cite{Cameron}, C\'ordoba and Lazar~\cite{Cordoba-Lazar-H3/2}, 
Gancedo and Lazar~\cite{Gancedo-Lazar-H2} which allowed arbitrary large slopes. 
The main difficulty in~\cite{AN1} 
is that the assumption of finite slopes is difficult to dispense of. 
Indeed, classical nonlinear estimates 
require to control the $L^\infty$-norm of 
some factors, which amounts for the Muskat problem 
to control the $L^\infty$-norm of the slopes~$\Delta_\alpha f$, equivalent to control 
the Lipschitz norm of~$f$. Loosely speaking, the assumption $a\ge \frac13$ might be 
understood as the minimal assumption 
in terms of weighted fractional Laplacian to overcome the degeneracy in the parabolic 
feature by an interpolation argument. 
In this paper, we will perform a much more difficult analysis 
which will allow us to remove completely the assumption $a\ge\frac13$.

\subsubsection*{Weighted Sobolev estimates} 
As already explained, for large data, the time of existence must depend 
on the initial data themselves and not only on its critical 
Sobolev norm. To overcome this problem, 
our strategy is to estimate the solution for a stronger norm whose definition involves the initial 
data themselves. To do so, we use the following elementary result (which follows from 
Lemma~$3.8$ in~\cite{AN1}).
\begin{lemma}\label{L:enhanced}
For all $f_0$ in $H^\tdm(\xR)$, there exists an admissible weight $\kappa$ satisfying 
$\lim_{r\to+\infty}\kappa(r)=\infty$ and such that 
$f_0$ belongs to $\mathcal{H}^{\tdm,\phi}(\xR)$ where $\phi$ is given by~\e{ni10}.
\end{lemma}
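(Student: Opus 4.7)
The plan is to first reduce the problem to finding an admissible $\kappa\to\infty$ against which $f_0$ is weighted-Sobolev integrable, then construct such a $\kappa$ using the distribution function of the $L^1$ tail $g(\xi)\defn|\xi|^3|\hat{f}_0(\xi)|^2$. Using~\e{ni10}, I would first establish a two-sided comparison $\kappa(\lambda)\lesssim\phi(\lambda)\lesssim\kappa(\lambda)$ valid for admissible $\kappa$ whose doubling constant $c_0$ satisfies $c_0<2$: the lower bound comes from restricting the $h$-integral to $[1,2]$ and applying (H1)--(H2); the upper bound splits the integral at $h=1$ and uses iterated doubling to estimate $\kappa(\lambda/h)\leq c_0\, h^{-\log_2 c_0}\kappa(\lambda)$, producing a convergent integral exactly when $c_0<2$. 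It then suffices to exhibit admissible $\kappa$ with $\kappa\to\infty$ and $\int_{\xR}|\xi|^3\kappa(|\xi|)^2|\hat{f}_0(\xi)|^2\dxi<\infty$.

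For the construction, let $G(R)\defn\int_{|\xi|>R}g(\xi)\dxi$, which is continuous, decreasing, and vanishes at infinity since $g\in L^1$. I would choose iteratively $R_1<R_2<\cdots\to+\infty$ so that $G(R_n)\leq 4^{-n}$ and, by enlarging $R_n$ if needed (which only improves the tail bound), so that the parameters $t_n\defn\log(4+R_n)$ satisfy the spacing condition $t_{n+1}-t_n\geq t_n-t_{n-1}$ with $t_0\defn 0$. Let $\psi:[0,\infty)\to[1,\infty)$ be the piecewise-linear interpolant with $\psi(0)=1$ and $\psi(t_n)=n+1$ for $n\geq 1$; the spacing condition is exactly equivalent to the slope sequence $\{1/(t_n-t_{n-1})\}$ being non-increasing, i.e., to $\psi$ being concave. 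Define $\kappa(r)\defn\psi(\log(4+r))$.

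Admissibility reduces to three clean checks. (H1) is immediate from $\psi$ increasing. (H3) follows because $\psi(t)/t$ is decreasing for any concave $\psi$ with $\psi(0)\geq 0$, so $\kappa(r)/\log(4+r)=\psi(t)/t$ is decreasing in $r$. For (H2), the bound $\log(4+2r)\leq\log(4+r)+\log 2$ combined with the decay of $\psi(t)/t$ gives $\kappa(2r)/\kappa(r)\leq(t+\log 2)/t\leq 3/2$ for $t\geq\log 4$, ensuring $c_0<2$ as required. Integrability then follows from a dyadic decomposition into annuli $A_n=\{R_n<|\xi|\leq R_{n+1}\}$: since $\kappa\leq n+2$ on $A_n$,
\[
\int_{A_n}\kappa(|\xi|)^2 g(\xi)\dxi\leq(n+2)^2 G(R_n)\leq(n+2)^2\,4^{-n},
\]
which is summable; and $\kappa\to\infty$ since $\psi\to\infty$.

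The main technical tension is that (H3) forces $\kappa\leq C\log(4+r)$, while the strict inequality $c_0<2$ in (H2) (needed for the $\phi\lesssim\kappa$ comparison in Step~1) further restricts how fast $\kappa$ may grow across dyadic scales; yet $\kappa$ must still grow fast enough along $(R_n)$ to suppress the tail of $g$. Packaging $\kappa$ through a single concave function $\psi$ of the variable $t=\log(4+r)$ is what makes these constraints compatible: it reduces all three admissibility conditions to a single concavity property of $\psi$, leaving only the choice of abscissae $(t_n)$, which can always be made to grow faster than $G$ decays since $f_0\in H^{\tdm}(\xR)$.
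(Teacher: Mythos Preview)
Your argument is correct and self-contained. The paper does not supply its own proof here, deferring instead to Lemma~3.8 of~\cite{AN1}, so there is no internal proof to compare against. One remark: the comparison $\phi\sim\kappa$ in fact holds for \emph{every} admissible weight (this is Proposition~\ref{Z9}, whose proof in~\cite{AN1} exploits assumption~(H\ref{H3}) to bypass the restriction $c_0<2$); but since your constructed $\kappa$ has doubling constant $c_0=3/2<2$, your more elementary argument for this step is sufficient.
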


Then, we are in position to state an improvement of statement $i)$ 
in Theorem~\ref{Theorem} 
which asserts that, 
whenever one controls a bigger norm than 
the critical one, 
the time of existence depends only on the norm. 
Recall that $\mathcal{H}^{\tdm,\phi}(\xR)$ is defined by~\e{defi:space}.

\begin{theorem}\label{Theorem2}
Consider a real number $M_0>0$ and a 
function $\phi$ given by \e{ni10} for some admissible weight 
satisfying assumptions~$(\rm{H}\ref{iH1})$--$(\rm{H}\ref{iH3})$ 
with $\lim_{r\to+\infty}\kappa(r)=\infty$. 
Then there exists a time $T_0>0$ depending on $M_0,\kappa$ such that, 
for any initial data $f_0$ in $\mathcal{H}^{\tdm,\phi}(\xR)$ 
satisfying $\lA f_0\rA_{\mathcal{H}^{\tdm,\phi}}\le M_0$, 
the Cauchy problem for~\e{a1} has a unique solution in the space
\begin{equation*}
\mathcal{X}^{\tdm,\phi}(T_0)\defn \left\{ f\in C^0\big([0,T_0];\mathcal{H}^{\tdm,\phi}(\xR)\big)\,;\,
\int_{0}^{T_0}\!\!\int_\xR \frac{\bla \D^{2,\phi}f\bra^2}{1+(\partial_xf)^2} \dx\dt<\infty\right\}.
\end{equation*}
\end{theorem}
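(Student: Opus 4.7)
The plan is to derive a weighted a priori estimate analogous to \e{i10} but in the space $\dot{\mathcal H}^{\tdm,\phi}$, to close it on a time depending on $(M_0,\kappa)$, and finally to construct solutions by a regularization-compactness procedure with uniqueness proved separately. Crucially, whereas \e{i10} can only be closed under a smallness assumption on the critical norm, the weighted version should be closable for arbitrary size thanks to the extra room provided by the unbounded weight $\kappa$.

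\textbf{Weighted null-type energy estimate.}
First I would apply $\D^{\tdm,\phi}$ to \e{a1} and pair with $\D^{\tdm,\phi}f$ in $L^2$. Symmetrizing the principal term in the variable $\alpha$, as in the C\'{o}rdoba-Gancedo derivation leading to \e{CL}, should produce the dissipation
\begin{equation*}
D_\phi(f)\defn \int_\xR \frac{\bla\D^{2,\phi}f\bra^2}{1+(\partial_xf)^2}\dx,
\end{equation*}
while the commutator remainders, handled by the null-type structure announced in the introduction together with a paradifferential calculus adapted to $\phi$ (cf.\ \cite{AN1}), should reduce to a multiple of $D_\phi(f)$ itself. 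The target inequality is
\begin{equation*}
\fract\lA f\rA_{\dot{\mathcal H}^{\tdm,\phi}}^2+cD_\phi(f)\le G\(\lA f\rA_{\dot{\mathcal H}^{\tdm,\phi}},\kappa\)D_\phi(f),
\end{equation*}
with $G$ continuous in its first variable and $G(0,\kappa)=0$.

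\textbf{Closing the estimate and constructing solutions.}
The gain enabling a large-data bootstrap comes from $\kappa(r)\to\infty$, which ensures that $\dot{\mathcal H}^{\tdm,\phi}$ is strictly stronger than $\dot H^\tdm$: for every $\eta>0$ there is a threshold $R=R(\eta,\kappa)$ such that $\bla\xi\bra^{\tdm}\le \eta\bla\xi\bra^{\tdm}\phi(\bla\xi\bra)$ on $|\xi|\ge R$, so that a frequency split controls the critical norm by $\eta\lA f\rA_{\dot{\mathcal H}^{\tdm,\phi}}$ plus a low-frequency term handled by Plancherel. A continuity bootstrap on $\{t\,:\,\lA f(t)\rA_{\dot{\mathcal H}^{\tdm,\phi}}\le 2M_0\}$ then produces $T_0=T_0(M_0,\kappa)$ and a bound $\sup_{[0,T_0]}\lA f(t)\rA_{\dot{\mathcal H}^{\tdm,\phi}}^2+\int_0^{T_0}D_\phi(f)\dt\le 4M_0^2$. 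To get an actual solution, I would regularize $f_0$ at frequency $N$, solve the subcritical Cauchy problem for each $N$ by \cite{Alazard-Lazar,Nguyen-Pausader}, apply the uniform weighted estimate to the regularized solutions $f^N$, and extract a limit by Aubin-Lions compactness; the convergence $\lA f_0^N\rA_{\mathcal H^{\tdm,\phi}}\to\lA f_0\rA_{\mathcal H^{\tdm,\phi}}$ follows from dominated convergence. Uniqueness in $\mathcal X^{\tdm,\phi}(T_0)$ would come from an $\dot H^\tdm$ energy estimate on the difference $f_1-f_2$ of two solutions, once again exploiting the null-type cancellation so that the degenerate factor $1+(\partial_xf_i)^2$ never needs to be bounded above.

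\textbf{Main obstacle.}
By far the hardest point is the weighted null-type estimate of Step~1: the naive commutator bounds between $\D^{\tdm,\phi}$ and the coefficient $1/(1+(\Delta_\alpha f)^2)$ cost one full derivative of $f$ in $L^\infty$, a quantity which is precisely not available at the endpoint $H^\tdm$. The null-type structure must be used to rewrite every such term so as to factor out the quotient $1/(1+(\partial_xf)^2)$ and pair it against $D_\phi(f)$, while the paralinearization must remain compatible with the weight $\phi$ defined by \e{ni10} so as not to destroy the slow gain encoded in $\kappa$. Doing both simultaneously, and in a way that is stable under the $\alpha$-symmetrization, is where the bulk of the technical work will lie.
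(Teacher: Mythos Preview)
Your strategy has the right architecture, but the target inequality you write down cannot be closed for large data, and the mechanism by which $\kappa\to\infty$ enters is not the one you describe.

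An estimate of the form
\begin{equation*}
\fract A + cD_\phi(f) \le G(A,\kappa)\,D_\phi(f),\qquad A\defn\lA f\rA_{\dot{\mathcal H}^{\tdm,\phi}}^2,
\end{equation*}
with $G(0,\kappa)=0$ still requires $G(A,\kappa)<c$ to absorb the right-hand side, i.e.\ smallness of the \emph{weighted} norm $A$ itself. Your frequency-split observation (that the critical $\dot H^{\tdm}$ norm is controlled by $\eta A^{1/2}$ plus a low-frequency remainder) does not help, because $G$ depends on $A$, not on the critical norm. This is exactly the obstruction discussed in the introduction: a purely multiplicative right-hand side of the form $G(A)B$ is what \e{i10} already gives with $\kappa\equiv 1$, and it only closes under smallness.

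The paper's estimate has a genuinely different structure. With $B=D_\phi(f)$, it reads (cf.\ \e{e:A-B} and Proposition~\ref{P:6.1})
\begin{equation*}
\fract A + B \le C(1+A^4)\Big(\log(4+B)^{1/2}B^{3/4}+\frac{B}{\kappa(B^{1/2})}+\frac{B}{\kappa(B^{1/50})}\Big)+\mathcal G(A).
\end{equation*}
The point is that the $B$-dependence on the right is \emph{sublinear} in $B$ for $B$ large, because $\kappa(r)\to\infty$. Hence, for any fixed $A$, the $B$-terms can be absorbed into the left-hand side up to a constant depending only on $A$, yielding $\fract A\le \tilde{\mathcal G}(A)$ and a lifespan $T_0(M_0,\kappa)$. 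The way $\kappa\to\infty$ is actually exploited is not via a comparison of norms but via a \emph{time-dependent frequency threshold} $\lambda=\lambda(t)$ in the commutator estimates (Propositions~\ref{X1}, \ref{P:5.6}, \ref{P:5.7}), together with the key inequality $\min\{\lambda|\xi|,1\}^\beta\le C_\beta\,\kappa(|\xi|)/\kappa(1/\lambda)$; choosing $\lambda\sim B^{-1/2}$ produces the $B/\kappa(B^{1/2})$ structure.

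Two smaller corrections: the difference estimate for uniqueness is done at the $\dot H^{1/2}$ level (Proposition~\ref{P:6.final}), not $\dot H^{\tdm}$; and convergence of the approximate solutions is obtained as a Cauchy sequence in $C^0_t H^{\tdm}_x$ (via the $\dot H^{1/2}$ contraction plus an interpolation using $\phi\to\infty$), rather than by Aubin--Lions compactness.
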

Notice that statement $i)$ in Theorem~\ref{Theorem} follows at once 
from Lemma~\ref{L:enhanced} 
and the above result.

\subsubsection*{A strong null-type property}
To prove Theorem~\ref{Theorem2}, we shall prove an estimate in weight 
Sobolev space $\mathcal{H}^{\tdm,\phi}(\xR)$ which extends~\e{i10} in several direction. 
Namely, 
given an initial data $f_0$ and an admissible weight $\phi$ such that $f_0$ 
belongs to $\mathcal{H}^{\tdm,\phi}(\xR)$, 
we shall prove that 
the following norms
\begin{equation*}
A(t)\defn \lA f(t)\rA_{L^2}^2+\blA \D^{\tdm,\phi}f(t)\brA_{L^2}^2,\quad 
B(t)\defn\int_\xR \frac{\bla \D^{2,\phi}f(t,x)\bra^2}{1+(\partial_xf(t,x))^2} \dx
\end{equation*}
satisfy
\begin{equation}\label{e:A-B}
\fract A
+ B\leq C_0 \left(1+A^{4}\right)\left(\log(4+B)^{\frac{1}{2}}B^{\frac{3}{4}}+\frac{B}{\kappa(B^{\mez})}+ \frac{B}{\kappa(B^{\frac{1}{50}})}\right)+\mathcal{G}\left(A\right),
\end{equation}
for some positive constant $C_0$ and some non-decreasing function $\mathcal{G}\colon\xR_+\to\xR_+$. 

The proof of~\e{e:A-B} relies on several new ideas. 
Firstly we shall systematically decompose the integrals in $\alpha$ into two parts: 
short scales $|\alpha|\le \lambda$ and large scales $|\alpha|\ge \lambda$. 
This is of course a classical idea, 
but here we are using it in a novel way tailored to our subject. 
Indeed, we will allow the frequency threshold to be a time dependent function 
(in the end, we select $\lambda=\lambda(t)=B(t)^\mez$). 
Also, the reason to introduce this decomposition is that, for any admissible weight $\kappa$ and any 
$\beta$ in $(0,1]$, there is a positive constant $C_\beta$ such that, for all $\lambda>0$ and all $\xi\in \xR$, 
$$
\min\{\lam|\xi|,1\}^\beta\le C_\beta \frac{\kappa(|\xi|)}{\kappa(1/\lambda)}\cdot
$$
Secondly, we shall uncover in the nonlinearity a special structure which allows 
to divide the most singular terms by the possibly large factor 
$1+(\partial_x f)^2$. To give this vague sentence a rigorous meaning, let us 
introduce the operator $\mathcal{T}(f)$ defined by
$$
\mathcal{T}(f)g = -\frac{1}{\pi}\int_\xR\left(\partial_x\Delta_\alpha g\right)
\frac{\left(\Delta_\alpha f\right)^2}{1+\left(\Delta_\alpha f\right)^2}\dalpha.
$$
This operator describes the nonlinearity in the Muskat equation since the latter can be written under the form (see Section~\ref{S:pMe})
\begin{align*}
\partial_tf+\D f = \mathcal{T}(f)f.
\end{align*}
We shall prove the following key estimate (see Proposition~\ref{P:5.7}),
\begin{align*}
&\la\left\langle \big[\D^{1,\phi},\mathcal{T}(f)\big](g),h\right\rangle\ra\\
&\quad \lesssim
\left( 1+\blA\D^{\frac{3}{2},\phi} f\brA_{L^2}^{5}\right)  \left(\brA\D^{\frac{3}{2},\phi}g\brA_{L^2}
\lA\frac{f_{xx}}{\L{f_x}}\rA_{L^2}
+\brA\D^{\frac{3}{2},\phi}f\brA_{L^2}\lA\frac{g_{xx}}{\L{f_x}}\rA_{L^2}\right)
\lA\frac{h}{\L{f_x}}\rA_{L^2}.
\end{align*}
This will allow us to commute derivatives with the Muskat equation. 
In addition, we shall prove two similar results required to analyze 
the convective term and a remainder term 
(see Proposition~\ref{X1} and Proposition~\ref{P:5.6}).

By so doing, we will get that, for any time dependent function $\lambda=\lambda(t)$,
\be\label{iinfty}
\begin{aligned}
&\fract \blA \D^{\tdm,\phi}f\brA_{L^2}^2
+ \lA\frac{|D|^{2,\phi}f}{\L{f_x}}\rA_{L^2}^2 \\
&\qquad\lesssim
\kappa\Big(\frac{1}{\lam}\Big)^{-1}\left(1+\blA\D^{\tdm,\phi}f\brA_{L^2}^{7}\right)
\blA\D^{\tdm,\phi}f\brA_{L^2} \lA\frac{|D|^{2,\phi}f}{\L{f_x}}\rA_{L^2}^2
\\
&\qquad\quad+
\left( 1+\blA\D^{\frac{3}{2},\phi} f\brA_{L^2}^{5}\right)\brA\D^{\frac{3}{2},\phi}f\brA_{L^2} \lA\frac{f_{xx}}{\L{f_x}}\rA_{L^2}\lA\frac{|D|^{2,\phi}f}{\L{f_x}}\rA_{L^2} \\
&\qquad\quad+\frac{1}{\sqrt{\lam}}\left(1+\blA\D^{\frac{3}{2},\phi}f\brA_{L^2}\right) \blA\D^{\frac{3}{2},\phi}f\brA_{L^2}^2\blA\D^{2,\phi}f\brA_{L^2},
\end{aligned}
\ee
where $f_x=\partial_x f$, $f_{xx}=\partial_{xx}f$ and $\L{f_x}=\sqrt{1+f_x^2}$. 
Notice that \e{iinfty} implies~\e{i10} by 
choosing $\kappa=1$ (then $\phi$ is a constant function) 
and letting $\lambda$ goes to $+\infty$.

Once \e{iinfty} will be established, the derivation of our main estimate~\e{e:A-B} 
will require to prove commutator estimates and interpolation inequalities 
which are also compatible with our singular weights. 

One can mention in this introduction the following 
estimate (see Lemma~\ref{L:Hilbert2}) 
about the Hilbert transform,
\be\label{i20a-1}
\int \frac{(\mathcal{H}g)^2}{1+h^2}\dx\lesssim\(1+\Vert h\Vert_{\dot H^{\frac{1}{2}}}^2\)
\int  \frac{g^2}{1+h^2}\dx.
\end{equation}
In the same vein, we shall prove that (see Proposition \ref{P:commD})
\be\label{i21}
\lVert \D^{\sigma,\phi}\left(\frac{g}{\sqrt{1+h^2}}\right)- \frac{1}{\sqrt{1+h^2}}
\D^{\sigma,\phi}g\rVert_{L^2}
\lesssim\blA \D^{\frac{1}{2},\phi}h\brA_{L^2}   
\lVert \frac{g}{\sqrt{1+h^2}}\rVert_{\dot H^{\sigma}}.
\ee

\subsubsection*{Uniqueness and propagation of regularity} 
Once {\em a priori} estimates will be granted, the end of the proof will rely on 
two additional ingredients. Firstly, a contraction estimate for 
the difference of two solutions. Namely, we shall prove (see Proposition~\ref{P:6.final}) that if $f_1$ and $f_2$ solve the Muskat equation, then
\begin{equation*}
\fract\Vert (f_1-f_2)(t) \Vert_{\dot{H}^\mez}\leq \mathcal{G}(M)
\log\left(2+\sum_{k=1}^2
\lA\frac{\partial_{xx}f_k}{\langle\partial_xf_k\rangle}(t)\rA_{L^2}\right)  \Vert (f_1-f_2)(t)\Vert_{\dot H^{\mez}},
\end{equation*}
where
$$
M\defn \sup_{t\in [0,T]}\big\{
\lA f_1(t)\rA_{\mathcal{H}^{\tdm,\phi}}+\lA f_2(t)\rA_{\mathcal{H}^{\tdm,\phi}}\}.
$$ 
Since $M$ involves a stronger norm than the one of $C^0([0,T];H^\tdm(\xR))$, 
to close the argument and obtain an unconditional uniqueness result 
in the space $X^\tdm(\xR)$ introduced 
in the statement of Theorem~\ref{Theorem}, 
we shall prove an enhanced regularity result. 
Namely, we shall prove (see Proposition~\ref{P:6.1b}) 
that for any solution in the space $X^\tdm(T)$, if the initial data 
belongs to $\mathcal{H}^{\tdm,\phi}(\xR)$ for some $\phi$, then the solution 
$f$ belongs to $C^0([0,T];\mathcal{H}^{\tdm,\phi}(\xR))$, after a possible shrinking of 
the time interval. 

\subsection{Plan of the paper}
The remainder of the paper is organized as follows. Section~\ref{S:2} reviews many inequalities 
which will be used throughout the article: Hardy, Hardy-Littlewood-Sobolev, Minkowski, embedding of Sobolev spaces in 
Triebel-Lizorkin spaces. We also prove in Section~\ref{S:2.2} the estimate~\e{i20a-1}. 
In Section~\ref{S:wRp} we recall various results about weighted fractional Laplacians 
that we borrow from our previous paper~\cite{AN1}, along with the 
key commutator estimate~\e{i21}. In Section~\S\ref{S:pMe}, we recall from~\cite{Alazard-Lazar} a paradifferential 
analysis of the Muskat equation, and apply it to obtain an energy estimate. 
The main task is then to estimate the various commutators and remainder terms, to obtain~\e{iinfty}. 
This is done in \S\ref{S:5}, which is the core of this paper. 
Once \e{iinfty} is proved, we conclude the proof in Section~\S\ref{S:6}
by using interpolation inequalities and a fair amount of bookkeeping.

\subsection{Notations}
Triebel-Lizorkin spaces are recalled in~\S\ref{S:2.1}. We refer to \S\ref{S:2.2} for the definitions of 
the Hilbert transform and Riesz potentials. More definitions and notations about 
the weighted fractional Laplacians are given in~\S\ref{S:wRp}. 

In addition:
\begin{enumerate}
\item We denote partial derivatives using subscripts: $f_x=\partial_xf$ 
and $f_{xx}=\partial_{xx}f$. 

\item If $A, B$ are nonnegative quantities, we use $A \lesssim B$ to denote the estimate 
$A\le CB$ for some constant $C$ depending only on fixed quantities, and $A\sim B$ to 
denote the estimate $A\les B\les A$.

\item Given two operators $A$ and $B$, we denote by $[A,B]$ the commutator 
$A\circ B-B\circ A$.
\item Given a normed space~$X$
and a function~$\varphi=\varphi(t,x)$ defined on~$[0,T]\times \xR$ with values in~$X$, 
we denote by~$\varphi(t)$ the function~$x\mapsto
\varphi(t,x)$. In the same vein, we use $\lA \varphi\rA_X$ as 
a compact notation for the time dependent function 
$t\mapsto \lA \varphi(t)\rA_X$. 

\item All functions are assumed to be real-valued. However, in many computations, 
we will use the complex-modulus notation: for instance, 
we will write $\la \Delta_\alpha f\ra^2$ or $\la \alpha\ra^2$ instead of $(\Delta_\alpha f)^2$ 
or $\alpha^2$, since 
we think that this might help to read certain formulas.

\end{enumerate}

\section{Preliminaries}\label{S:2}

We begin by discussing some classical inequalities which are basic to the analysis developed below. 
We also prove in~\S\ref{S:2.2} a new estimate about the Hilbert transform.

\subsection{Minkowski's inequality}
Suppose that $(S_1,\mu_1)$ and $(S_2,\mu_2)$ 
are two $\sigma$-finite measure spaces and 
$F\colon S_1\times S_2\to\xR$ is measurable. 
Then
\be\label{Min}
\bigg(\underset{S_2}{\int} 
\bigg\vert\underset{S_1}{\int} F(x,y)\mu_1(\dx)\bigg\vert^p 
\mu_2(\dy)\bigg)^{\frac{1}{p}}\le 
\bigg(\underset{S_1}{\int} 
\bigg(\underset{S_2}{\int} \la F(x,y)\ra^p \mu_2(\dy)\bigg)
^{\frac{1}{p}}
\mu_1(\dx).
\ee

\subsection{Hardy's inequality}
Consider two real numbers $r\geq 1$ and $s<r-1$. 
For any non-negative function $f$, there holds (see~\cite[page 20]{Zygmund})
\begin{equation}\label{Hardy0}
\int_0^\infty \left(\frac{1}{x}\int_0^xf(\tau)\dtau\right)^r x^s\dx 
\le \(\frac{r}{r-s-1}\)^r\int_0^\infty f(x)^rx^s\dx.
\end{equation}
In particular, one has,
\begin{equation}\label{Hardy}
\int_{\mathbb{R}} \left(\fint_0^x f(\tau)\dtau\right)^r |x|^s\dx 
\le \(\frac{r}{r-s-1}\)^r\int_{\mathbb{R}}  f(x)^r|x|^s\dx.
\end{equation}

\subsection{Riesz potentials}\label{S:2.2.a}
The Riesz potentials are the operators
$$
\mathbf{I}_s f(x)=\int\frac{f(y)}{|x-y|^{1-s}}\dy,\quad 0<s<1.
$$
The Hardy-Littlewood-Sobolev lemma states that these operators map $L^p\to L^q$, for 
\be\label{Riesz}
1<p<\frac{1}{s}
\quad\text{such that}\quad 
s+\frac1q=\frac1p\cdot
\ee
The previous result will be used 
throughout the paper. In particular, 
we will make extensive use of the fact 
that, for $0<s<1$, the Fourier multiplier $\D^{-s}$ is a multiple of $\mathbf{I}_{s}$:
\be\label{b1}
\D^{-s}=c_{s}\mathbf{I}_{s},
\ee
for some constant $c_{s}$.

\subsection{Hilbert transform}\label{S:2.2}
The Hilbert transform $\mathcal{H}$ is defined by:
\be\label{n3}
\mathcal{H}u(x)=-\frac{1}{\pi}\mathrm{pv}\int_\xR \frac{u(y)}{x-y}\dy.
\ee
This operator is ubiquitous in the study of the Muskat equation. 

For later purposes, we record in this paragraph two commutator estimates for the
Hilbert transform. 

Firstly, we will need to estimate the commutator of $\mathcal{H}$ 
with a function in $\dot{H}^1(\xR)$. 
Since $\dot{H}^1(\xR)$ is embedded into $C^{\mez}(\xR)$ and since 
the Hilbert transform is an operator of order $0$ (bounded from $H^\mu(\xR)$ 
to itself for any $\mu\in\xR$), we expect 
the commutator to be an operator of order $-1/2$. 
The next result (which is Lemma~$3.4$ in~\cite{AN1}) 
states that this is indeed the case when 
the commutator is acting on an element of $\dot{H}^{-\mez}(\xR)$.
\begin{lemma}[from~\cite{AN1}]\label{L:Hilbert1}
For all $g_1$ in $\dot{H}^1(\xR)$ and all $g_2$ in $\dot{H}^\mez(\xR)$, there holds
\begin{equation*}
\lA \left[\mathcal{H}, g_1\right ](\partial_x g_2)\rA_{L^2}
\lesssim\lA g_1\rA_{\dot H^{1}}\lA g_2\rA_{\dot H^{\mez}}.
\end{equation*}
\end{lemma}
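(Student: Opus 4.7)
The plan is to pass to the Fourier side and exploit a null-type support condition on the symbol of the commutator. I would first compute, using the standard identities $\widehat{\mathcal{H}u}(\xi) = i\sign(\xi)\hat u(\xi)$ and $\widehat{fg} = (2\pi)^{-1} \hat f * \hat g$, that
\begin{equation*}
\widehat{[\mathcal{H},g_1](\partial_x g_2)}(\xi) = -\frac{1}{2\pi}\int_{\xR}\bigl(\sign\xi - \sign\eta\bigr)\,\eta\,\hat g_1(\xi-\eta)\,\hat g_2(\eta)\,\deta.
\end{equation*}
The key observation I will rely on is that $\sign\xi - \sign\eta$ vanishes unless $\xi$ and $\eta$ have opposite signs; in that null regime $|\xi-\eta|=|\xi|+|\eta|$, so the $g_1$-frequency automatically carries at least the sum of the two inputs. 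This is exactly where the half-derivative of smoothing produced by commuting $\mathcal{H}$ with multiplication by $g_1$ will come from.

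By symmetry it suffices to control the $\xi>0$ portion of $\widehat{[\mathcal{H},g_1](\partial_x g_2)}$; there $\eta<0$, and I will set $\eta=-\mu$ with $\mu>0$. Introducing $F(\zeta)\defn|\zeta|\,|\hat g_1(\zeta)|$ and $G(\eta)\defn|\eta|^{1/2}\,|\hat g_2(\eta)|$, so that $\|F\|_{L^2}\sim\|g_1\|_{\dot H^1}$ and $\|G\|_{L^2}\sim\|g_2\|_{\dot H^{1/2}}$, the identity above will give the pointwise bound
\begin{equation*}
\bigl|\widehat{[\mathcal{H},g_1](\partial_x g_2)}(\xi)\bigr| \les \int_0^\infty \frac{\sqrt\mu}{\xi+\mu}\,F(\xi+\mu)\,G(-\mu)\,\diff\mu.
\end{equation*}
Cauchy-Schwarz in $\mu$ will separate the two factors and produce the weight $\int_0^\infty \mu(\xi+\mu)^{-2}F(\xi+\mu)^2\,\diff\mu$ times $\|G\|_{L^2}^2$. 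Integrating in $\xi>0$, substituting $\nu=\xi+\mu$, and applying Fubini, the inner weight $\int_0^\nu \mu\,\diff\mu = \nu^2/2$ will cancel exactly the $\nu^{-2}$ coming from $(\xi+\mu)^{-2}$; the resulting double integral collapses to $\tfrac12\|F\|_{L^2}^2$, after which Plancherel concludes.

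The main obstacle is that more elementary, physical-side decompositions do not close at this endpoint. For example, using $\mathcal{H}\partial_x=|D|$ produces $[\mathcal{H},g_1]\partial_x g_2 = [|D|,g_1]g_2 - \mathcal{H}(g_1' g_2)$, but neither summand is separately bounded by $\|g_1\|_{\dot H^1}\|g_2\|_{\dot H^{1/2}}$: the Kenig--Ponce--Vega commutator $[|D|,g_1]g_2$ would naturally demand $g_1$ Lipschitz, while $\|g_1' g_2\|_{L^2}$ would demand $g_2\in L^\infty$, both of which fail in one space dimension at this regularity. Likewise, a pointwise estimate exploiting $\dot H^1(\xR)\hookrightarrow C^{1/2}(\xR)$ would reduce the commutator to the Riesz potential $\mathbf{I}_{1/2}(|\partial_x g_2|)$, which sits at the forbidden endpoint $L^1\to L^2$ of Hardy--Littlewood--Sobolev. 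The Fourier argument sketched above bypasses both obstructions by encoding the algebraic cancellation as a support restriction on the symbol.
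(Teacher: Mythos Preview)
Your Fourier-side argument is correct. The paper does not give its own proof of this lemma: it is simply quoted from the companion paper~\cite{AN1}, so there is no proof here to compare against. That said, your approach is a clean and self-contained one: the support condition $\sign\xi\neq\sign\eta$ forces $|\xi-\eta|=|\xi|+|\eta|\ge|\eta|$, which is exactly what lets you trade the full derivative on $g_2$ for the $\dot H^1$ weight on $g_1$ and leave only a half-derivative on $g_2$. The Cauchy--Schwarz step and the Fubini/substitution computation that collapses the double integral to $\tfrac12\lA F\rA_{L^2}^2$ are both accurate as written.

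Your closing discussion of why physical-space decompositions (via $[\D,g_1]g_2-\mathcal{H}(g_1'g_2)$ or via $\dot H^1\hookrightarrow C^{1/2}$ and $\mathbf{I}_{1/2}$) fail at this endpoint is also on point and shows you understand why the algebraic cancellation in the symbol is essential.
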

We will also need a lemma which generalizes 
the boundedness of the Hilbert transform on $L^2(\xR)$.  
\begin{lemma}\label{L:Hilbert2}
Consider a function $\gamma\colon\xR\to [0,1]$ such that, 
for any $z_1,z_2$ in $\xR$,  
\be\label{assu:F2}
\la \gamma(z_1)-\gamma(z_2)\ra\le 
C\gamma(z_2)\la z_1-z_2\ra,
\ee
for some absolute constant $C$. 
Then, for all function $g$ in $L^2(\xR)$ 
and all function $h$ in $\dot{H}^\mez(\xR)$, there holds
\begin{equation}\label{Z3}
\int \la\gamma(h)\mathcal{H}g\ra^2\dx \lesssim_{C}\(1+\Vert h\Vert_{\dot H^{\frac{1}{2}}}^2\)
\int \la \gamma(h)g\ra^2\dx.
\end{equation}
\end{lemma}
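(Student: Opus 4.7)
The plan is to split $\gamma(h)\mathcal{H}g$ by means of the commutator identity
\[\gamma(h)\mathcal{H}g = \mathcal{H}\big(\gamma(h) g\big) + \big[\gamma(h),\mathcal{H}\big]g,\]
and to estimate each piece in $L^2$ separately. Since $\mathcal{H}$ is an isometry on $L^2(\xR)$, the first piece satisfies $\lA\mathcal{H}(\gamma(h)g)\rA_{L^2} = \lA\gamma(h)g\rA_{L^2}$, which is already bounded by the desired right-hand side of \e{Z3}. The whole task thus reduces to proving the commutator estimate
\[\lA [\gamma(h),\mathcal{H}]g\rA_{L^2} \lesssim_C \lA h\rA_{\dot H^{\mez}} \lA \gamma(h) g\rA_{L^2},\]
so that upon squaring and summing both contributions, the factor $1+\lA h\rA_{\dot H^{\mez}}^2$ in \e{Z3} emerges.

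For the commutator I would start from its explicit kernel representation,
\[[\gamma(h),\mathcal{H}]g(x) = -\frac{1}{\pi}\,\pv\!\int_\xR \frac{\gamma(h(x))-\gamma(h(y))}{x-y}\,g(y)\,\dy,\]
and then apply hypothesis \e{assu:F2} with $z_1=h(x)$, $z_2=h(y)$ to produce the pointwise bound
\[\bla [\gamma(h),\mathcal{H}]g(x)\bra \leq \frac{C}{\pi}\int_\xR K(x,y)\,\gamma(h(y))\,\la g(y)\ra\,\dy,\qquad K(x,y)\defn \frac{\la h(x)-h(y)\ra}{\la x-y\ra}.\]
The crucial feature is that \e{assu:F2} is directional: its right-hand side involves $\gamma(z_2)=\gamma(h(y))$ rather than $\gamma(h(x))$, so $\gamma(h(y))\la g(y)\ra$ appears inside the $y$-integral, which after an $L^2$ operator bound will reproduce precisely $\lA\gamma(h)g\rA_{L^2}$.

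To convert this pointwise bound into an $L^2$-estimate I would invoke the Hilbert--Schmidt criterion for integral operators together with the Gagliardo seminorm representation of $\dot H^{\mez}(\xR)$:
\[\iint_{\xR\times\xR} K(x,y)^2\,\dx\dy = \iint_{\xR\times\xR}\frac{\la h(x)-h(y)\ra^2}{\la x-y\ra^2}\,\dx\dy \sim \lA h\rA_{\dot H^{\mez}}^2.\]
The integral operator with kernel $K$ is thus bounded on $L^2$ with norm $\lesssim \lA h\rA_{\dot H^{\mez}}$; applying it to $F(y)=\gamma(h(y))\la g(y)\ra$ yields the sought commutator bound, and concatenating the two pieces gives \e{Z3}. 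I do not foresee a real obstacle: the argument is essentially a commutator-plus-Hilbert--Schmidt estimate, and the only delicate point is the directional form of \e{assu:F2}, which is exactly what makes $\gamma(h(y))$ (rather than $\gamma(h(x))$) emerge inside the $y$-integral; without this asymmetry, pulling $\gamma(h(x))$ outside the integral would prevent recovering the weighted $L^2$-norm on the right of \e{Z3}.
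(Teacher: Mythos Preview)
Your proof is correct and shares with the paper the same initial decomposition (write $\gamma(h)\mathcal{H}g$ as $\mathcal{H}(\gamma(h)g)$ plus a commutator, then use \e{assu:F2} to get the pointwise bound by $\int K(x,y)\la u(y)\ra\dy$ with $u=\gamma(h)g$ and $K(x,y)=\la h(x)-h(y)\ra/\la x-y\ra$). The divergence is only in how the resulting integral operator is estimated in $L^2$.

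The paper does not use the Hilbert--Schmidt bound; instead it applies H\"older's inequality inside the $y$-integral with exponents $(4/3,4)$, producing a Riesz potential of $\la u\ra^{3/2}$ and a factor $\la h(x)-h(y)\ra/\la x-y\ra^{1/3}$, then bounds the first via Hardy--Littlewood--Sobolev and the second via the embedding $\dot H^{1/2}(\xR)\hookrightarrow \dot W^{1/3,3}(\xR)$. Your route is shorter and more elementary for this particular lemma: the Hilbert--Schmidt norm of $K$ is \emph{exactly} (up to a constant) the Gagliardo seminorm $\lA h\rA_{\dot H^{1/2}}$, so one line suffices. The paper's route, by contrast, is a template that recurs throughout (splitting an integral into a Riesz-potential factor and a Triebel--Lizorkin/Sobolev factor), so it fits the surrounding machinery even if it is heavier here. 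Both yield the same constant structure $1+\lA h\rA_{\dot H^{1/2}}^2$ on the right.
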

\begin{remark}\label{R:3.4}
$i)$ We will apply this result with $\gamma(z)=\L{z}^{-1}=(1+z^2)^{-1/2}$. 

$ii)$ A classical result states that
$$
\lA \big[b,\mathcal{H}\big]u\rA_{L^p}\le C \lA b\rA_{{\rm BMO}}\lA u\rA_{L^p},\quad 1<p<\infty.
$$
This implies that the left-hand side of~\e{Z3} is bounded by 
$$
\lA \gamma(h)g\rA_{L^2}+\lA \gamma(h)\rA_{{\rm BMO}}\lA g\rA_{L^2}.
$$
Compared to the latter inequality, the key point is that the 
right-hand side of \e{Z3} does not involve the $L^2$-norm 
of $g$, but the one of $\gamma(h)g$. This is crucial here since we do not assume that 
$\gamma(h)$ is not bounded from below. 
\end{remark}
\begin{proof}
Set $u=\gamma(h)g$ and write
\begin{align*}
\int \la \gamma(h)\mathcal{H}g\ra^2\dx&\sim\int \bigg(\int \gamma(h)(x)\frac{g(y)}{x-y}\dy\bigg)^2\dx\\
&\lesssim \int |\mathcal{H}u|^2\dx+\int \left(\int \la u(y)\ra \la h(x)-h(y)\ra\frac{\dy}{|x-y|}\right) ^2 \dx,
\end{align*}
where we have used \eqref{assu:F2} in the last inequality. 

The first term in the right-hand side above is bounded by $\lA u\rA_{L^2}^2$ while the H\"older's inequality 
implies that the second term is estimated by
$$
\bigg(\int\bigg(\int \frac{\la u(y)\ra^\tdm}{\la x-y\ra^\mez}\dy\bigg)^{4}\dx\bigg)^{\frac{1}{3}}\times 
\bigg(\iint \bigg(\frac{|h(x)-h(y)|}{|x-y|^{\frac{1}{3}}}\bigg)^{3} \frac{\dy\dx}{|x-y|}\bigg)^{\frac{2}{3}}.
$$
Using the boundedness of the Riesz potentials on the Lebesgue spaces (see~\e{Riesz}), 
we get that 
the first factor is estimated by
$$
\bigg(\int\bigg(\int \frac{\la u(y)\ra^\tdm}{\la x-y\ra^\mez}\dy\bigg)^{4}\dx\bigg)^{\frac{1}{3}}\les
\blA |u|^{\tdm}\brA_{L^{\frac{4}{3}}}^{\frac{4}{3}}=\lA u\rA_{L^2}^2.
$$
On the other hand, the Sobolev 
embedding $\dot{H}^\mez(\xR)\hookrightarrow \dot{W}^{\frac13,3}(\xR)$ (see~\e{Sobolev} and \e{Gagliardo} below) implies 
that
$$
\bigg(\underset{\xR\times\xR}{\iint} \bigg(\frac{|h(x)-h(y)|}{|x-y|^{\frac{1}{3}}}\bigg)^{3}
\frac{\dy\dx}{|x-y|}\bigg)^{\frac{2}{3}}
\les \Vert h\Vert_{\dot H^{\frac{1}{2}}}^2.
$$
This proves~\e{Z3}.
\end{proof}

\subsection{Triebel-Lizorkin norms}\label{S:2.1}

For easy reading, we review some elementary results about 
Triebel-Lizorkin spaces following Triebel (see~\cite[$\S 2.3.5$]{Triebel-TFS} 
and~\cite[section~$3$]{Triebel1988}). 

Given a function $f\colon\xR\to\xR$, an integer $m\in\xN\setminus\{0\}$ 
and a real number $h\in\xR$,
 we define the finite differences 
$\delta_h^mf$ as follows:
$$
\delta_hf(x)=f(x)-f(x-h),\qquad \delta_h^{m+1}f=\delta_h(\delta_h^mf).
$$
\begin{definition}
Given 
an integer $m\in\xN\setminus\{0\}$, 
a real number $s\in [m-1,m)$ and $(p,q)\in [1,\infty)^2$, 
the homogeneous Triebel-Lizorkin space $\dot F^{s}_{p,q}(\xR)$ 
consists of those tempered distributions $f$
whose Fourier transform is integrable on a neighborhood of 
the origin and such that 
\be\label{n5}
\lA f\rA_{\dot F^s_{p,q}}
=\left(\int_{\mathbb{R}}\left(\int_{\mathbb{R}}
\la\delta_h^mf(x)\ra^q\frac{\dh}{|h|^{1+qs}}\right)^{\frac{p}{q}}\dx\right)^{\frac{1}{p}}<+\infty.
\ee
\end{definition}

We will extensively use the embedding of homogeneous Sobolev spaces 
in  Triebel-Lizorkin spaces.  Firstly, recall that 
$\lA \cdot\rA_{\dot H^{s}}$ and $\lA \cdot\rA_{\dot{F}^s_{2,2}}$ 
are equivalent. 
Moreover, for $s\in (0,1)$,
\be\label{SE0}
\lA u\rA_{\dot H^{s}}^2=\frac{1}{4\pi c(s)}\lA u\rA_{\dot{F}^s_{2,2}}^2
\quad\text{with}\quad c(s)=\int_\xR \frac{1-\cos(h)}{\la h\ra^{1+2s}}\dh.
\ee 
In addition, for any $2<p< \infty$ and any $q\leq\infty$, if $t-\frac{1}{2}=s-\frac{1}{p}$ then 
\begin{equation}\label{FB}
\lA f\rA_{\dot F^{s}_{p,q}(\mathbb{R})}\lesssim \lA f\rA_{\dot H^{t}(\mathbb{R})}.
\end{equation}
This includes the Sobolev embedding
\be\label{Sobolev}
\dot{H}^t(\xR)\hookrightarrow L^{\frac{2}{1-2t}}(\xR) \quad\text{for}\quad 
0\le t<\mez\cdot
\ee

Recall also that, for $s\in (0,1)$ and $1\le p<+\infty$, 
fractional Sobolev spaces~$\dot{W}^{s,p}(\xR)$ are 
associated with the Gagliardo semi-norms
\be\label{Gagliardo}
\lA u\rA_{\dot{W}^{s,p}}^p\defn 
\underset{\xR\times\xR}\iint \frac{\la u(x)-u(y)\ra^p}{\la x-y\ra^{sp}}\frac{\dx\dy}{\la x-y\ra}\cdot
\ee

\subsection{Several inequalities}
We gather in this paragraph several inequalities which will be used repeatedly. 
We also want to mention them in this preliminary section because their proofs 
are good examples of the kind of techniques that will be used later on. 

We begin with the three following elementary estimates:
\begin{align}
&\iint_{\xR^2}  \la \Delta_\alpha f\ra^2\dalpha\dx\sim \lA f\rA_{\dot{H}^{\mez}}^2,\label{an1:n15}\\[1ex]
&\iint_{\xR^2} \la \Delta_\alpha f-f_x\ra^2\frac{\dalpha}{|\alpha|^2}\dx\sim \lA f\rA_{\dot{H}^\tdm}^2,\label{an1:x1}\\[1ex]
&\int_\xR \left( \int_\xR \la\Delta_\alpha f\ra^2\dalpha\right)^2\dx\lesssim\lA f\rA_{\dot H^{\frac{3}{4}}}^4.\label{an1:n120}
\end{align}
The estimates~\e{an1:n15} and \e{an1:n120} follow immediately from the 
Sobolev embeddings in Tribel-Lizorkin spaces (see~\e{SE0} and \e{FB}), 
while \e{an1:x1} is proved by using the Fourier transform. 

Next, we prove more technical estimates for the following quantities:
$$
\Delta_\alpha f-f_x \quad\text{or}\quad \Delta_\alpha f-\Delta_{-\alpha}f.
$$
We are going to prove estimates valid for 
frequencies large enough 
(or equivalently, estimates valid when we integrate 
only on sets where the physical variable $|\alpha|$ is smaller than some threshold $\lambda$).

\begin{definition}\label{D:ES}
Given a real number $\alpha$, we define the operators $E_\alpha$ and $S_\alpha$ by
\begin{align*}
E_\alpha(f)(x)&=\Delta_\alpha f(x)-f_x(x)=\frac{f(x)-f(x-\alpha)}{\alpha}-f_x(x),\\
S_\alpha(f)(x)&=\Delta_\alpha f(x)-\Delta_{-\alpha}f(x)=\frac{2f(x)-f(x-\alpha)-f(x+\alpha)}{\alpha}\cdot
\end{align*}
\end{definition}
\begin{lemma}\label{L:2.7}
$i)$ Consider five real numbers $(\gamma,a_1,a_2,a_3,\beta)$ satisfying
$$
a_1\in \xR,\quad \gamma\ge 2,\quad 1<a_2<1+\gamma,\quad a_3=a_1+\frac{a_2-1}{\gamma},\quad \beta=\frac{2+2\gamma-2a_2}{\gamma}\cdot
$$
Then, for any $\lambda>0$ and any $f\in \mathcal{S}(\xR)$, the following inequalities hold:
\begin{align}
&\int_{|\alpha|\le\lam}\lA E_\alpha(f)\rA_{\dot{H}^{a_1}}^\gamma\frac{\dalpha}{|\alpha|^{a_2}}
\lesssim
\left(\int \min\{\lam|\xi|,1\}^{\beta} |\xi|^{2+2a_3} |\hat f(\xi)|^2\dxi
\right)^{\frac{\gamma}{2}},\label{e1}\\
&\int_{|\alpha|\le\lam}\lA S_\alpha(f)\rA_{\dot{H}^{a_1}}^\gamma\frac{\dalpha}{|\alpha|^{a_2}}
\lesssim\left(\int \min\{\lam|\xi|,1\}^{\beta} |\xi|^{2+2a_3} |\hat f(\xi)|^2\dxi
\right)^{\frac{\gamma}{2}},\label{e2}
\end{align}
and
\be\label{e3}
\int_{|\alpha|\le\lam}\lA \delta_\alpha(f)\rA_{\dot{H}^{a_1}}^\gamma\frac{\dalpha}{|\alpha|^{a_2}}
\lesssim\left(\int \min\{\lam|\xi|,1\}^{\beta} |\xi|^{2a_3} |\hat f(\xi)|^2\dxi
\right)^{\frac{\gamma}{2}}.
\ee
\end{lemma}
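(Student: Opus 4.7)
My plan is to pass to the Fourier side. For each of the three operators $E_\alpha$, $S_\alpha$, $\delta_\alpha$, the associated Fourier multiplier factorizes as a power of $|\xi|$ times $\min\{|\alpha\xi|,1\}$. Concretely, I would first record the pointwise bounds
\[
\left|\frac{1-e^{-i\alpha\xi}}{\alpha}-i\xi\right|+\left|\frac{2-e^{i\alpha\xi}-e^{-i\alpha\xi}}{\alpha}\right|\lesssim |\xi|\min\{|\alpha\xi|,1\},\qquad |1-e^{-i\alpha\xi}|\lesssim \min\{|\alpha\xi|,1\},
\]
which follow from Taylor expansion when $|\alpha\xi|\le 1$ and a brute bound when $|\alpha\xi|\ge 1$. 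Thanks to Plancherel, these give
\[
\lA E_\alpha f\rA_{\dot H^{a_1}}^2+\lA S_\alpha f\rA_{\dot H^{a_1}}^2\lesssim \int |\xi|^{2+2a_1}\min\{(\alpha\xi)^2,1\}|\hat f(\xi)|^2\dxi,
\]
with the analogous inequality for $\delta_\alpha f$ but with $|\xi|^{2a_1}$ in place of $|\xi|^{2+2a_1}$.

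Next, since the hypothesis $\gamma\ge 2$ makes $\gamma/2\ge 1$, I would apply Minkowski's inequality \eqref{Min} to bring the $\alpha$-integration inside the $\xi$-integration. Denoting by $I$ the left-hand side of \eqref{e1}, this yields
\[
I^{2/\gamma}\lesssim \int |\xi|^{2+2a_1}|\hat f(\xi)|^2\left(\int_{|\alpha|\le\lam}\min\{|\alpha\xi|,1\}^{\gamma}\frac{\dalpha}{|\alpha|^{a_2}}\right)^{\!2/\gamma}\dxi.
\]
The inner integral is a one-dimensional integral that I would compute by the substitution $\beta=\alpha\xi$:
\[
\int_{|\alpha|\le\lam}\min\{|\alpha\xi|,1\}^{\gamma}\frac{\dalpha}{|\alpha|^{a_2}}=|\xi|^{a_2-1}\int_{|\beta|\le\lam|\xi|}\min\{|\beta|,1\}^{\gamma}\frac{\diff\!\beta}{|\beta|^{a_2}}.
\]
The $\beta$-integrand is integrable near $0$ because $\gamma-a_2>-1$ (from $\gamma\ge 2$ and $a_2<1+\gamma$) and integrable at infinity because $a_2>1$. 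A direct estimate shows that the $\beta$-integral is bounded by $\min\{\lam|\xi|,1\}^{\gamma-a_2+1}$.

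Plugging this back, I get
\[
I^{2/\gamma}\lesssim \int |\xi|^{2+2a_1+\frac{2(a_2-1)}{\gamma}}\min\{\lam|\xi|,1\}^{\frac{2(\gamma-a_2+1)}{\gamma}}|\hat f(\xi)|^2\dxi,
\]
and the algebraic definitions $a_3=a_1+(a_2-1)/\gamma$ and $\beta=2(\gamma-a_2+1)/\gamma$ show that the exponents match those in \eqref{e1}. Estimates \eqref{e2} and \eqref{e3} follow by the exact same scheme, the only modification being the initial Fourier-side pointwise bound (which spares two powers of $|\xi|$ in the $\delta_\alpha$ case). The only delicate step is checking that the constraints $\gamma\ge 2$ and $1<a_2<1+\gamma$ in the hypotheses are the precise conditions required for Minkowski's inequality to apply and for the one-dimensional $\alpha$-integral to converge; everything else is routine bookkeeping of exponents.
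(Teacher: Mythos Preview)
Your proposal is correct and follows essentially the same route as the paper: Fourier-side pointwise bounds on the multipliers, then Minkowski (using $\gamma\ge 2$), then a scaling substitution to evaluate the inner $\alpha$-integral and match exponents. The only cosmetic issue is that you reuse the symbol $\beta$ for your substitution variable while $\beta$ already names the exponent in the statement; rename it to avoid confusion.
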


\begin{proof}
This proof is based on the Fourier transform. The key point is that 
\be\label{e4}
\begin{aligned}
\Vert E_\alpha(f)\Vert_{\dot H^{a_1}}^2&=\int |\xi|^{2a_1}\frac{\left|1-e^{-i\alpha\xi}-i\xi\alpha\right|^2 }{\alpha^2}|\hat f(\xi)|^2\dxi\\
&\lesssim \int |\xi|^{2+2a_1}\min\{1,\la \alpha\ra \la \xi\ra\} ^2|\hat f(\xi)|^2\dxi,\\ 
\Vert S_\alpha(f)\Vert_{\dot H^{a_1}}^2&\lesssim \int |\xi|^{2+2a_1}\min\{1,\la \alpha\ra \la \xi\ra\} ^2|\hat f(\xi)|^2\dxi,\\
\Vert \delta_\a(f)\Vert_{\dot H^{a_1}}^2&\lesssim \int |\xi|^{2a_1}
\min\{1,\la \alpha\ra \la \xi\ra\}^2|\hat f(\xi)|^2\dxi.
\end{aligned}
\ee

$i)$ Let us prove~\e{e1} and \e{e2} . 
Directly from~\e{e4} we have
\begin{align*}
Q^\gamma&\defn\int_{|\alpha|\le\lam}\lA E_\alpha(f)\rA_{\dot{H}^{a_1}}^\gamma\frac{\dalpha}{|\alpha|^{a_2}}
+\int_{|\alpha|\le\lam}\lA S_\alpha(f)\rA_{\dot{H}^{a_1}}^\gamma\frac{\dalpha}{|\alpha|^{a_2}}
\\
&\lesssim \underset{|\alpha|\le\lam}{\int}\left(\int_{\xR}|\xi|^{2+2a_1}\min\{1,\la \alpha\ra \la \xi\ra\} ^2 |\hat f(\xi)|^2\dxi\right)^{\frac{\gamma}{2}}\frac{\dalpha}{|\alpha|^{a_2}}.
\end{align*}
We raise both sides 
of the previous inequality to the power $\gamma^{-1}$, to obtain
\begin{equation*}
Q\lesssim \Bigg(\underset{|\alpha|\le\lam}{\int}\left(\int_{\xR}
\min\{1,\la \alpha\ra \la \xi\ra\} ^2 |\xi|^{2+2a_1}|\hat f(\xi)|^2\dxi\right)^{\frac{\gamma}{2}}
\frac{\dalpha}{|\alpha|^{a_2}}\Bigg)^{\frac2\gamma \cdot \mez}.
\end{equation*}
Now we are in position to apply Minkowski's inequality. Since $\gamma/2\ge 1$, 
we obtain that
\begin{equation}\label{e5}
Q
\lesssim \Bigg(\int_{\xR}\Bigg( \underset{|\alpha|\le\lam}{\int}
\min\{1,\la \alpha\ra \la \xi\ra\} ^\gamma
\frac{\dalpha}{|\alpha|^{a_2}}\Bigg)^{\frac{2}{\gamma}}|\xi|^{2+2a_1}|\hat f(\xi)|^2\dxi\Bigg)^\frac{\gamma}{2}.
\end{equation}
Now, by an elementary change of variable, we have
\begin{align*}
\underset{|\alpha|\le\lam}{\int}\min\{1,\la \alpha\ra \la \xi\ra\} ^\gamma
\frac{\dalpha}{|\alpha|^{a_2}}&=\la \xi\ra^{a_2-1}\underset{|\alpha|\le\lam\la\xi\ra}{\int}\min\{1,|\alpha|\} ^\gamma
\frac{\dalpha}{|\alpha|^{a_2}}\\
&\lesssim \la \xi\ra^{a_2-1} \min\left\{1, \lam|\xi|\right\}^{1+\gamma-a_2}.
\end{align*}
It follows that
\begin{align}\label{V1}
\Bigg(\underset{|\alpha|\le\lam}{\int}\min\{1,\la \alpha\ra \la \xi\ra\} ^\gamma
\frac{\dalpha}{|\alpha|^{a_2}}\Bigg)^{\frac{2}{\gamma}}&\les \la \xi\ra^{\frac{2 (a_2-1)}{\gamma}}
\times \min\left\{1, \lam|\xi|\right\}^{\frac{2+2\gamma-2a_2}{\gamma}}.
\end{align}
Then the wanted estimates~\e{e1} and \e{e2} 
follow from the previous bound and \e{e5}.

$ii)$ It remains to prove~\e{e3}. 
Starting from~\e{e4} and repeating the arguments used to infer~\e{e5}, we get
\begin{align*}
\int_{|\alpha|\le\lam}\lA \delta_\alpha(f)\rA_{\dot{H}^{a_1}}^\gamma\frac{\dalpha}{|\alpha|^{a_2}}
&\lesssim
\Bigg(\int_{\xR}\Bigg( \underset{|\alpha|\le\lam}{\int}\min\{1,\la \alpha\ra \la \xi\ra\}^\gamma
\frac{\dalpha}{|\alpha|^{a_2}}\Bigg)^{\frac{2}{\gamma}}|\xi|^{2a_1}
|\hat f(\xi)|^2\dxi\Bigg)^\frac{\gamma}{2}\\&\overset{\eqref{V1}}
\lesssim \left(\int \min\{\lam|\xi|,1\}^{\frac{2+2\gamma-2a_2}{\gamma}} |\xi|^{2a_3} |\hat f(\xi)|^2\dxi
\right)^{\frac{\gamma}{2}},
\end{align*}
which  implies~\e{e3}. 
\end{proof}

\section{Weighted fractional Laplacians}\label{S:wRp}

This section is devoted to the study of weighted fractional Laplacians, 
which is our pivotal tool to study the Muskat equation in critical spaces. 
The first paragraph is a review consisting of various notations and results about 
these operators, following our previous paper~\cite{AN1}. 
After these preliminaries, in $\S\ref{S:4.2}$ we pass to the proof of a key commutator estimate, 
which generalizes in some sense Lemma~\ref{L:Hilbert2} about 
the Hilbert transform.

\subsection{Definition and characterization}

Following our previous works, we will work with the following weighted fractional Laplacians. 

\begin{definition}\label{defi:D}
Consider $s$ in $[0,+\infty)$ and 
a function $\phi\colon [0,\infty)\to [1,\infty)$ satisfying a doubling condition
(such that $\phi(2r)\leq c_0\phi(r)$ for any $r\geq 0$ and some constant $c_0>0$). Then $|D|^{s,\phi}$ denotes 
the Fourier multiplier with symbol $|\xi|^s\phi(|\xi|)$. More precisely,
\begin{equation*}
\mathcal{F}( |D|^{s,\phi}f)(\xi)=|\xi|^s\phi(|\xi|) \mathcal{F}(f)(\xi).
\end{equation*}
Moreover, we define the space
$$
\mathcal{H}^{s,\phi}(\xR)=\{ f\in L^2(\xR)\, :|D|^{s,\phi}f\in L^2(\xR)\},
$$
equipped with the norm
$$
\lA f\rA_{\mathcal{H}^{s,\phi}}\defn \lA f\rA_{L^2}
+\left(\int_\xR \la \xi\ra^{2s}(\phi(\la\xi\ra))^2\bla\hat{f}(\xi)\bra^2\dxi\right)^\mez.
$$
\end{definition}
\begin{remark}
The Fourier multiplier $|D|^{s,\phi}$ with 
$\phi(r)=\log(2+r)^a$
was introduced and studied in~\cite{BN18a,BN18b,BN18d} for $s\in [0,1)$ (also see \cite{Ng}). 
\end{remark}
We shall consider some special functions $\phi$ which depend 
on some extra function $\kappa\colon [0,\infty)\to [1,\infty)$, of the form
\begin{equation}\label{n10}
\phi(\lam)=\int_{0}^{\infty}\frac{1-\cos(h)}{h^2} \kappa\left(\frac{\lam}{h}\right) \dh, \quad \text{for }\lambda\ge 0.
\end{equation}
Let us clarify the assumptions on $\kappa$ which will be needed in the sequel.

\begin{definition}\label{defi:admissible}
We say that a function $\kappa\colon[0,\infty) \to [1,\infty)$ 
is an admissible weight if it satisfies the following three conditions: 
\begin{enumerate}[$({{\rm H}}1)$]
\item\label{H1} $\kappa$ is increasing;
\item\label{H2} there is a positive constant $c_0$ such that $\kappa(2r)\leq c_0\kappa(r)$ for any $r\geq 0$; 
\item\label{H3} the function $r\mapsto \kappa(r)/\log(4+r)$ is decreasing on  $[r_0,\infty)$, for some $r_0>0$ large enough.
\end{enumerate}
\end{definition}

In \cite[Prop.~$2.7$]{AN1}, it is proved that $\phi$ and $\kappa$ are equivalent. 
More precisely, we have the following

\begin{proposition}[from~\cite{AN1}]\label{Z9}
Assume that $\phi$ is as defined in~\e{n10} for some admissible weight 
$\kappa$. 
Then there exist two constants $c,C>0$ such that, for all $\lambda\ge 0$,
\be\label{n60b}
c\kappa(\lam)\le \phi(\lam)\le C \kappa(\lam).
\ee
In particular, $\phi$ is also an admissible weight.
\end{proposition}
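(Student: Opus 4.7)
The plan is to leverage the admissibility hypotheses on $\kappa$ to bound $\kappa(\lambda/h)$ in terms of $\kappa(\lambda)$ in two different frequency regimes, turning the integral defining $\phi(\lambda)$ into a constant multiple of $\kappa(\lambda)$. Monotonicity (H\ref{H1}) handles the region $h \ge 1$ where $\kappa(\lambda/h)$ is pushed down toward $\kappa(\lambda)$, while the near-logarithmic growth enforced by (H\ref{H3}) handles the short scales $h \le 1$ where $\lambda/h$ is large. The doubling property (H\ref{H2}) alone is insufficient in the short-scale regime: it would only yield a polynomial bound $\kappa(\lambda/h) \lesssim \kappa(\lambda)h^{-\log_2 c_0}$, which need not be integrable against the bounded factor $(1-\cos h)/h^2$ once $c_0 \ge 2$.

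\emph{Lower bound.} I discard the contribution from $h \ge 1$ (which is nonnegative). On $(0,1]$, the inequality $\lambda/h \ge \lambda$ together with (H\ref{H1}) gives $\kappa(\lambda/h) \ge \kappa(\lambda)$, hence
$$
\phi(\lambda) \ge \kappa(\lambda) \int_0^1 \frac{1-\cos h}{h^2}\, dh =: c\,\kappa(\lambda).
$$

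\emph{Upper bound.} I split the integral at $h = 1$. For $h \ge 1$, hypothesis (H\ref{H1}) gives $\kappa(\lambda/h) \le \kappa(\lambda)$, and since $(1-\cos h)/h^2 \le 2/h^2$ the tail contributes at most $C\kappa(\lambda)$. For $h \le 1$, in the regime where $\lambda$ and $\lambda/h$ are both at least $r_0$, condition (H\ref{H3}) yields
$$
\kappa(\lambda/h) \le \kappa(\lambda)\,\frac{\log(4+\lambda/h)}{\log(4+\lambda)}
\le C\kappa(\lambda)\bigl(1+\log(1/h)\bigr),
$$
where the second inequality uses the elementary bound $\log(4+\lambda/h) \le \log(4+\lambda) + \log(1/h)$ for $h \le 1$ (since $4+\lambda/h \le (4+\lambda)/h$). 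Integrating against $(1-\cos h)/h^2$, which is bounded on $(0,1]$, together with the integrability of $\log(1/h)$ near $0$, produces a contribution of size at most $C\kappa(\lambda)$. The complementary bounded regimes, namely $\lambda \le r_0$ or $\lambda/h \le r_0$, are handled by the boundedness of $\kappa$ on compact sets together with $\kappa \ge 1$, so that any $\lambda$-independent finite bound on $\phi(\lambda)$ there is automatically absorbed into $C\kappa(\lambda)$.

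The main obstacle I expect is the bookkeeping between the different regimes (small $\lambda$ versus large $\lambda$, short versus long scales), in particular verifying that the pieces on which (H\ref{H3}) does not directly apply can be dominated by $C\kappa(\lambda)$ using only the admissibility hypotheses and $\kappa \ge 1$. Once the two-sided bound is established, the ``in particular'' assertion that $\phi$ is itself admissible follows readily: monotonicity and doubling for $\phi$ are inherited termwise from the integral definition and the corresponding properties of $\kappa$, while the decreasing behavior of $\phi(r)/\log(4+r)$ on $[r_0',\infty)$ (for some possibly larger $r_0'$) follows from the equivalence $\phi \sim \kappa$ combined with (H\ref{H3}) for $\kappa$.
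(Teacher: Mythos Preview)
The paper does not supply a proof of this proposition; it is quoted from~\cite{AN1} without argument, so there is no in-paper proof to compare against. Your argument for the two-sided bound $c\kappa(\lambda)\le\phi(\lambda)\le C\kappa(\lambda)$ is correct: the lower bound via (H\ref{H1}) on $(0,1]$, and the upper bound via the split at $h=1$ --- using (H\ref{H1}) on the tail and (H\ref{H3}) on $(0,1]$ to extract the factor $1+\log(1/h)$, which is integrable against the bounded kernel $(1-\cos h)/h^2$ --- are both sound. Your treatment of the regime $\lambda\le r_0$ is also fine once one observes that $\phi$ is increasing (by (H\ref{H1}) applied under the integral), so $\phi(\lambda)\le\phi(r_0)<\infty$ there, and $\kappa\ge 1$ absorbs this constant.

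There is, however, a genuine gap in your justification of the ``in particular'' clause. You correctly note that (H\ref{H1}) and (H\ref{H2}) for $\phi$ are inherited termwise from the integral representation. But your argument that (H\ref{H3}) for $\phi$ --- the eventual monotonicity of $\phi(r)/\log(4+r)$ --- follows from the equivalence $\phi\sim\kappa$ combined with (H\ref{H3}) for $\kappa$ is invalid: pointwise equivalence does not transfer monotonicity. For instance, $\kappa(r)=\log(4+r)$ is admissible with $\kappa(r)/\log(4+r)\equiv 1$, yet a function of the form $\kappa(r)\big(1+\tfrac12\sin r\big)$ is equivalent to $\kappa$ while its ratio to $\log(4+r)$ oscillates. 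Establishing (H\ref{H3}) for the specific $\phi$ given by~\eqref{n10} --- if it holds in exactly the stated form --- would require exploiting the integral structure, not merely the equivalence. In this paper only the equivalence~\eqref{n60b} and the doubling of $\phi$ (needed for Definition~\ref{defi:D}) are actually used downstream, so the gap is harmless for the applications here, but the ``in particular'' assertion remains unjustified by your sketch.
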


For later applications, we make some preliminary remarks about admissible weights.
\begin{lemma}
$i)$ For any $\sigma>0$, there exists a positive constant $C$ such that, for any real numbers 
$0< r\le \mu$,
\be\label{est:kappa}
r^\sigma\kappa\left(\frac{1}{r}\right) \le C_\sigma\,\mu^\sigma
\kappa\left(\frac{1}{\mu}\right).
\ee
$ii)$ For any $\beta$ in $(0,1]$, there is a positive constant $C_\beta$ such that, 
for all $\lambda>0$ and all $\xi\in \xR$, 
\begin{equation}\label{est:kappa3}
\min\{\lam|\xi|,1\}^\beta\le C_\beta \frac{\kappa(|\xi|)}{\kappa(1/\lambda)}\cdot
\end{equation}
\end{lemma}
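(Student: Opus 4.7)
The plan is to prove (i) first and deduce (ii) by a change of variables. It is convenient to reformulate (i) under the substitution $u = 1/r$, $v = 1/\mu$: the condition $0 < r \le \mu$ becomes $0 < v \le u$, and (i) takes the equivalent form $\kappa(u) \le C_\sigma (u/v)^\sigma \kappa(v)$ for all $0 < v \le u$. In this form the statement asserts a polynomial upper bound on the growth ratio of $\kappa$, which is plausible since hypothesis (H\ref{H3}) forces $\kappa$ to grow at most logarithmically at infinity. I would perform a case analysis on the position of $v$ relative to the threshold $r_0$ appearing in (H\ref{H3}).

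Case A ($v, u \ge r_0$): hypothesis (H\ref{H3}) directly yields $\kappa(u)/\kappa(v) \le \log(4+u)/\log(4+v)$. Using the elementary bound $4+u \le (4+v)(1+u/v)$, the right-hand side is controlled by $1 + \log(1+u/v)/\log(4+r_0)$. Combined with the standard estimate $1 + \log(1+x) \le C_\sigma(1+x)^\sigma$ (valid for every $\sigma > 0$ and all $x \ge 0$, since $\log$ grows slower than any positive power) and with $(1+u/v)^\sigma \le 2^\sigma (u/v)^\sigma$ (because $u/v \ge 1$), we conclude. Case B ($v < r_0 \le u$): apply Case A to the pair $(r_0, u)$, giving $\kappa(u) \le C_\sigma (u/r_0)^\sigma \kappa(r_0)$, and then use $u/r_0 \le u/v$ together with $\kappa(v) \ge 1$ to absorb $\kappa(r_0)$ into the constant. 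Case C ($v \le u < r_0$): monotonicity (H\ref{H1}) gives $\kappa(u) \le \kappa(r_0)$ while $\kappa(v) \ge 1$ and $(u/v)^\sigma \ge 1$, so the bound is immediate.

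For (ii), split on whether $\lambda|\xi| \ge 1$ or not. In the first case, the left-hand side equals $1$ and monotonicity (H\ref{H1}) gives $\kappa(|\xi|) \ge \kappa(1/\lambda)$ since $|\xi| \ge 1/\lambda$, so the inequality holds with $C_\beta = 1$. In the second case $\lambda|\xi| < 1$, apply the reformulated (i) with $u = 1/\lambda$, $v = |\xi|$, and $\sigma = \beta$, to obtain $\kappa(1/\lambda) \le C_\beta (\lambda|\xi|)^{-\beta} \kappa(|\xi|)$, which rearranges to the desired bound.

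The main technical point is the estimate in Case A: this is where the logarithmic growth condition (H\ref{H3}) is essential. Without it, an admissible weight could grow polynomially and (i) would fail outright; conversely, once Case A is handled the remaining cases and the reduction of (ii) are straightforward bookkeeping.
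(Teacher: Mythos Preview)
Your proof is correct. It differs from the paper's in the treatment of part (i): the paper writes
\[
r^\sigma\kappa\Big(\frac{1}{r}\Big)=
\frac{\kappa(1/r)}{\log(4+1/r)}\cdot\Big[r^{\sigma}\log\big(e^{1/\sigma}+1/r\big)\Big]\cdot\frac{\log(4+1/r)}{\log(e^{1/\sigma}+1/r)}
\]
and observes that the first two factors are increasing in $r$ (the first by (H\ref{H3}), the second by a derivative computation) while the third is bounded, so the whole product is bounded by its value at $r=\mu$ up to a constant. Your argument instead bounds the ratio $\kappa(u)/\kappa(v)$ directly from the logarithmic inequality $\log(4+u)\le\log(4+v)+\log(1+u/v)$ and then converts $\log$ to a power. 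The paper's factorization is slicker but tacitly uses (H\ref{H3}) on all of $[0,\infty)$; your case analysis on the threshold $r_0$ is more pedestrian but handles the weaker hypothesis of Definition~\ref{defi:admissible} explicitly. For part (ii) the two proofs coincide. One harmless omission: in the case $\lambda|\xi|<1$ you set $v=|\xi|$, which excludes $\xi=0$, but there the left-hand side vanishes and the inequality is trivial.
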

\begin{proof}
To prove statement $i)$ we use a decomposition into 
three factors:
$$
r^\sigma\kappa\(\frac{1}{r}\)=
\frac{\kappa\(\frac{1}{r}\)}{\log\(4+\frac{1}{r}\)}\times \left[ r^{\sigma}\log\(e^{\frac1\sigma}+\frac{1}{r}\)\right]
\times \frac{\log\(4+\frac{1}{r}\)}{\log\(e^{\frac1\sigma}+\frac{1}{r}\)}\cdot
$$ 
The first factor is an increasing function of $r$ by 
assumption on $\kappa$ (see Definition~\ref{defi:admissible}); 
the second factor is also increasing, 
as can be verified by computing its derivative, and the 
third factor is a bounded function on $(0,+\infty)$. 

Now, notice that the inequality in statement $ii)$ is obvious if $|\lambda \xi|\ge 1$ 
since $\kappa$ is increasing. 
When $|\lambda \xi|\le 1$, it follows directly from $i)$.
\end{proof}

The next proposition clarifies the links between $\D^{s,\phi}$ and the 
function $\kappa$ (we refer to Lemma~$2.6$ and Proposition~$2.7$ in~\cite{AN1} for the proof).

\begin{proposition}[from~\cite{AN1}]\label{P:3.5}
Assume that $\phi$ is as defined in~\e{n10} for some admissible weight $\kappa$. 

$i)$ For all 
$g\in \mathcal{S}(\xR)$, there holds
\begin{equation}\label{d1phi}
\D^{1,\phi}g(x)=\frac{1}{4}\int_{\xR}\frac{2g(x)-g(x+\alpha)-g(x-\alpha)}{\alpha^2}
\kappa\left(\frac{1}{|\alpha|}\right) \dalpha.
\end{equation}

$ii)$ Given $g\in \mathcal{S}(\xR)$, define the semi-norm
$$
\lA g\rA_{s,\kappa}:=\left(\iint_{\xR^2} \la 2g(x)-g(x+\alpha)-g(x-\alpha)\ra^2
\left(\frac{1}{|\alpha|^{s}}\kappa\left(\frac{1}{|\alpha|}\right)\right)^2\frac{\dx \dalpha}{|\alpha|}\right)^\mez\cdot
$$
Then, for all $1<s<2$, there exist two constants $c,C>0$ such that, for 
all $g\in \mathcal{S}(\xR)$,
\begin{equation*}
c\int_{\xR}  \la \D^{s,\phi}g(x)\ra^2\dx\le \lA g\rA_{s,\kappa}^2
\le C\int_{\xR}\la \D^{s,\phi}g(x)\ra^2\dx.
\end{equation*}
\end{proposition}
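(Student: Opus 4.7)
For statement $i)$, I would verify the identity by taking the Fourier transform of both sides. The symbol of $|D|^{1,\phi}$ equals $|\xi|\phi(|\xi|)$ by definition, while the Fourier transform in $x$ of the second difference $2g(x)-g(x+\alpha)-g(x-\alpha)$ is $(2-2\cos(\alpha\xi))\hat g(\xi)$. The task thus reduces to the scalar identity
\begin{equation*}
\frac{1}{4}\int_{\xR}\frac{2-2\cos(\alpha\xi)}{\alpha^2}\kappa\!\left(\frac{1}{|\alpha|}\right)\dalpha=|\xi|\phi(|\xi|),
\end{equation*}
which one obtains by the rescaling $h=|\xi|\alpha$ and the evenness of the integrand in $h$, recognising the definition~\e{n10} of $\phi$ at the end. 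The Fubini step used when moving to the Fourier side for $g\in\mathcal{S}(\xR)$ is legitimate because the $\alpha^2$-vanishing of $2g(x)-g(x+\alpha)-g(x-\alpha)$ near $\alpha=0$ compensates the slow (at worst logarithmic) growth of $\kappa(1/|\alpha|)$ permitted by assumption~(H\ref{H3}), while at infinity the $1/\alpha^2$ factor suffices since $\kappa$ is bounded near the origin.

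For statement $ii)$, I would again pass to the Fourier side. By Plancherel in $x$ and the rescaling $\beta=|\xi|\alpha$ in the $\alpha$-integral, the semi-norm $\lA g\rA_{s,\kappa}^2$ equals, up to a positive multiplicative constant,
\begin{equation*}
\int_{\xR}|\xi|^{2s}\Psi(|\xi|)|\hat g(\xi)|^2\dxi,\qquad \Psi(r)\defn\int_{0}^{\infty}\frac{(1-\cos\beta)^2}{\beta^{2s+1}}\,\kappa\!\left(\frac{r}{\beta}\right)^{\!2}\diff\beta.
\end{equation*}
Since $\lA |D|^{s,\phi}g\rA_{L^2}^2$ is a positive constant multiple of $\int|\xi|^{2s}\phi(|\xi|)^2|\hat g(\xi)|^2\dxi$ and $\phi\sim\kappa$ by Proposition~\ref{Z9}, the entire argument reduces to the pointwise comparison $\Psi(r)\sim \kappa(r)^2$, uniform in $r\ge 0$.

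The lower bound on $\Psi(r)$ is immediate: restricting to $\beta\in[1,2]$ and using the doubling property~(H\ref{H2}) gives $\kappa(r/\beta)\gtrsim \kappa(r)$, with $(1-\cos\beta)^2$ bounded below on that interval. For the upper bound, which I anticipate as the main obstacle, I split the integral at $\beta=1$. On $\{\beta\ge 1\}$ the monotonicity~(H\ref{H1}) gives $\kappa(r/\beta)\le\kappa(r)$ while $(1-\cos\beta)^2\le 4$, so the contribution is $\lesssim\kappa(r)^2\int_1^\infty\beta^{-2s-1}\diff\beta$, which is finite. The delicate regime is $\beta\in(0,1)$, where $\kappa(r/\beta)$ can be much larger than $\kappa(r)$. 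Here I invoke assumption~(H\ref{H3}), which after a brief case analysis in $r$ (separating $r\ge r_0$ from $r<r_0$ and exploiting the crude bound $\log(4+r/\beta)\le \log(4+r)+\log(1/\beta)$) yields
\begin{equation*}
\kappa(r/\beta)\lesssim \kappa(r)\bigl(1+\log(1/\beta)\bigr).
\end{equation*}
Combined with $(1-\cos\beta)^2\lesssim \beta^4$, this produces an integrand bounded by $\kappa(r)^2\beta^{3-2s}(1+\log(1/\beta))^2$ on $(0,1)$, integrable precisely because $s<2$. This is the pinch-point of the whole argument: both the upper restriction $s<2$ and the admissibility hypothesis~(H\ref{H3}) are used in a sharp way, and this is where the doubling assumption alone would be insufficient. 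Once the equivalence $\Psi(r)\sim\kappa(r)^2$ is established, multiplying by $|\xi|^{2s}|\hat g(\xi)|^2$ and integrating yields the two-sided bound in $ii)$.
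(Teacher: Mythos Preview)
The paper does not actually prove this proposition; it simply cites Lemma~2.6 and Proposition~2.7 of~\cite{AN1} for the proof. So there is no in-paper argument to compare against.

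That said, your approach is correct and is the natural one. The Fourier computation in part~$i)$ is clean, and in part~$ii)$ the reduction to the pointwise equivalence $\Psi(r)\sim\kappa(r)^2$ via Plancherel and rescaling is exactly right. Your handling of the upper bound on $(0,1)$ is where the content lies, and you have identified it correctly: assumption~$(\mathrm{H}\ref{H3})$ is what converts the potentially unbounded ratio $\kappa(r/\beta)/\kappa(r)$ into a logarithmic loss $(1+\log(1/\beta))$, which is then absorbed by the $\beta^{3-2s}$ factor coming from the second-difference vanishing, precisely when $s<2$. The case analysis in $r$ around the threshold $r_0$ is a minor bookkeeping point (using that $\kappa$ is bounded on $[0,r_0]$ and bounded below by $1$ everywhere), and you have flagged it appropriately. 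One small remark: your argument in fact goes through for all $0<s<2$; the lower restriction $s>1$ in the statement is not needed for the norm equivalence itself.
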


\subsection{A commutator estimate}\label{S:4.2}
Here is the main result of this section.
\begin{proposition}\label{P:commD}
Let 
$\phi$ be 
as defined in~\e{n10} for some admissible weight~$\kappa$. 
Consider a function $\gamma\colon\xR\to[0,1]$ such that, 
for any $x,y$ in $\xR$,  
\be\label{assu:F}
\la \gamma(x)-\gamma(y)\ra\le 
C\gamma(y)\la x-y\ra,
\ee
for some absolute constant $C$. 
Then, for any $\sigma\in (0,1/2)$, and $h,g\in \mathcal{S}(\mathbb{R})$, there holds
\be\label{Z1}
\lVert \D^{\sigma,\phi}\left(
\gamma(h)g\right)- \gamma(h)
\D^{\sigma,\phi}g\rVert_{L^2}
\lesssim_{C} \blA \D^{\frac{1}{2},\phi}h\brA_{L^2}   
\lVert \gamma(h)g\rVert_{\dot H^{\sigma}}.
\ee
\end{proposition}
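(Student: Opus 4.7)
The plan is to derive \e{Z1} by expressing the commutator as a pointwise weighted singular integral, using the Lipschitz-type hypothesis on $\gamma$ to convert it into a bilinear expression in $h$ and $u\defn\gamma(h)g$, and then splitting that expression by Cauchy--Schwarz into two factors that reconstruct $\blA|D|^{1/2,\phi}h\brA_{L^2}$ on one side and, via weak-type Young together with Sobolev embedding, $\lA u\rA_{\dot H^\sigma}$ on the other.

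The first step is to establish, in the spirit of Proposition~\ref{P:3.5} but with first differences and $\sigma\in(0,1)$, that $|D|^{\sigma,\phi}$ is equivalent (up to a bounded multiplier) to the singular integral
\[
\widetilde P_\sigma f(x)\defn c_\sigma\int_{\xR}\frac{f(x)-f(x-\alpha)}{|\alpha|^{1+\sigma}}\kappa\!\left(\frac{1}{|\alpha|}\right)\dalpha,
\]
whose symbol, by a direct Fourier computation (change of variables $h=|\xi||\alpha|$), equals $|\xi|^\sigma\phi_\sigma(|\xi|)$ with $\phi_\sigma(\lam)=\int_0^\infty(1-\cos h)h^{-1-\sigma}\kappa(\lam/h)\,dh$ comparable to $\phi$ by admissibility of~$\kappa$. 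Subtracting $F(x)\widetilde P_\sigma g(x)$ from $\widetilde P_\sigma(Fg)(x)$ with $F=\gamma(h)$ yields
\[
[\widetilde P_\sigma,F]g(x)=c_\sigma\int_{\xR}\frac{[F(x)-F(x-\alpha)]\,g(x-\alpha)}{|\alpha|^{1+\sigma}}\kappa\!\left(\frac{1}{|\alpha|}\right)\dalpha,
\]
and applying~\e{assu:F} together with $F(x-\alpha)g(x-\alpha)=u(x-\alpha)$ gives, after the substitution $y=x-\alpha$,
\[
\bigl|[\widetilde P_\sigma,F]g(x)\bigr|\lesssim\int_{\xR}\frac{|u(y)|\,|h(x)-h(y)|}{|x-y|^{1+\sigma}}\kappa\!\left(\frac{1}{|x-y|}\right)\dy.
\]

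By $L^2$-duality it then suffices to control, for every $v\in L^2$ with $\lA v\rA_{L^2}\le 1$, the trilinear form
\[
J(h,u,v)=\iint_{\xR\times\xR}|u(y)||v(x)||h(x)-h(y)|\,\frac{\kappa(1/|x-y|)}{|x-y|^{1+\sigma}}\,\dx\,\dy.
\]
I would factor the kernel as $\bigl[|h(x)-h(y)|\kappa(1/|x-y|)|x-y|^{-1}\bigr]\cdot|x-y|^{-\sigma}$ and apply Cauchy--Schwarz in the pair $(x,y)$. The first factor squared integrates to $\iint|h(x)-h(y)|^2|x-y|^{-2}\kappa(1/|x-y|)^2\,\dx\dy$, which a straightforward Plancherel computation identifies with $\blA|D|^{1/2,\phi}h\brA_{L^2}^2$ (the first-difference characterization of the weighted $\dot H^{1/2}$-norm). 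The second factor squared equals $\langle|u|^2\ast|\cdot|^{-2\sigma},|v|^2\rangle$; since $|\cdot|^{-2\sigma}$ lies in the Lorentz space $L^{1/(2\sigma),\infty}(\xR)$ and the Hölder-conjugate exponent is $1/(1-2\sigma)$, the weak-type Young inequality yields $\lA|u|^2\ast|\cdot|^{-2\sigma}\rA_{L^\infty}\lesssim\lA u\rA_{L^{2/(1-2\sigma)}}^2$, and the Sobolev embedding~\e{Sobolev} upgrades this to $\lA u\rA_{\dot H^\sigma}^2$. Assembling the two factors yields~\e{Z1}.

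The main obstacle is the very first reduction: the most natural first-difference formula produces the symbol $|\xi|^\sigma\phi_\sigma(|\xi|)$ rather than $|\xi|^\sigma\phi(|\xi|)$, and one must check carefully that the correction---an operator of the same order whose symbol is the bounded ratio $\phi/\phi_\sigma$---does not spoil the commutator estimate, either by absorbing it into an equivalent norm or by a separate paraproduct argument. Once this equivalence is cleanly settled the remaining analysis is essentially forced: the scaling $\sigma\in(0,1/2)$ pins down the Hölder/Young/Sobolev exponents at their critical values, and there is no slack to exploit anywhere else in the chain.
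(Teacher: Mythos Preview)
Your proposal has two gaps, one structural and one technical.

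\textbf{The reduction to $\widetilde P_\sigma$.} You correctly flag this as the main obstacle, but neither of your suggested fixes works. Writing $|D|^{\sigma,\phi}=m(D)\widetilde P_\sigma$ with $m=\phi/\phi_\sigma$ gives
\[
[\,|D|^{\sigma,\phi},F\,]g=m(D)[\widetilde P_\sigma,F]g+[m(D),F]\,\widetilde P_\sigma g.
\]
The second term involves $\widetilde P_\sigma g$, which depends on $g$ itself and not on $u=Fg$; since $F=\gamma(h)$ is not bounded away from zero (the whole point of the proposition, cf.\ Remark~\ref{R:3.4}), there is no way to control this in terms of $\lA u\rA_{\dot H^\sigma}$ as \e{Z1} requires. ``Absorbing into an equivalent norm'' does not help: the equivalence $\phi\sim\phi_\sigma$ implies $\blA|D|^{\sigma,\phi}v\brA_{L^2}\sim\blA\widetilde P_\sigma v\brA_{L^2}$ for each fixed $v$, but says nothing about commutators. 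A paraproduct argument for $[m(D),F]$ would need $m$ to be a H\"ormander--Mikhlin multiplier, which requires bounds on $|\xi|^k m^{(k)}(\xi)$; yet an admissible $\kappa$ is only increasing and doubling (Definition~\ref{defi:admissible}), so $m$ need not even be differentiable. The paper avoids this entirely: it factors $|D|^{\sigma,\phi}=|D|^{1,\phi}\,|D|^{-(1-\sigma)}$, combines the exact second-difference formula~\e{d1phi} with the Riesz kernel~\e{b1}, and proves directly that the resulting kernel $K$ obeys $K(y)\lesssim\kappa(1/|y|)\,|y|^{-1-\sigma}$. This yields the desired pointwise bound on the \emph{true} commutator, with no auxiliary operator and no leftover term.

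\textbf{The Cauchy--Schwarz splitting.} Your claim $\blA\,|u|^2\ast|\cdot|^{-2\sigma}\brA_{L^\infty}\lesssim\lA u\rA_{L^{2/(1-2\sigma)}}^2$ is false: this is exactly the forbidden $L^\infty$ endpoint of Hardy--Littlewood--Sobolev, and there exist $u\in\dot H^\sigma$ with $(|u|^2\ast|\cdot|^{-2\sigma})(0)=+\infty$. Pairing against $|v|^2\in L^1$ hits the same endpoint from the other side. (The step is salvageable with Lorentz refinements---weak-type $(1,1/(2\sigma))$ for the Riesz potential together with the Lorentz--Sobolev embedding $\dot H^\sigma\hookrightarrow L^{2/(1-2\sigma),2}$---but not with the weak Young inequality you invoke.) The paper sidesteps the endpoint by an \emph{asymmetric} Cauchy--Schwarz in the inner variable: it splits $|z|^{-1-\sigma}=|z|^{-1/4-\sigma}\cdot|z|^{-3/4}$, which lands the Riesz potential $|D|^{-1/2+2\sigma}(u^2)$ in $L^2$ (an interior HLS exponent) and simultaneously places the $h$-factor in $L^4_x$, so that the embedding $\dot H^{1/4}\hookrightarrow L^4$ reconstitutes $\blA|D|^{1/2,\phi}h\brA_{L^2}$.
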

\begin{remark}
As mentioned in Remark~$\ref{R:3.4}$, the key point is that 
the right-hand side does not involve the $L^2$-norm 
of $g$ but the one of $\gamma(h)g$. This is crucial here since we do not assume that 
$\gamma(h)$ is not bounded from below. 
\end{remark}
\begin{proof}
The first step of the proof consists in proving the following inequality:
\begin{multline}
\label{b2}
\left| \D^{\sigma,\phi}\left(\gamma(h) g\right)(x)- (\gamma(h)\D^{\sigma,\phi}g )(x)\right|
\\ \les \int \la	 \delta_z h(x)\ra \kappa\(\frac{1}{|z|}\) 
(\gamma(h)g)(x-z)\frac{\dz}{|z|^{1+\sigma}}\cdot
\end{multline}
To do so, we begin by applying the representation formula~\e{d1phi} (resp.\ \e{b1}) 
for the operator $\D^{1,\phi}$ (resp.\ $\D^{-a}$ with $0<a<1$). 
Since 
$$
\D^{\sigma,\phi}=\D^{1,\phi}\D^{-(1-\sigma)},
$$
it follows that
\begin{align*}
&\D^{\sigma,\phi} \eta(x)\\
&\quad=\frac{1}{4}\int_{\xR}
\frac{2|D|^{-(1-\sigma)}\eta(x)-|D|^{-(1-\sigma)}\eta(x+t)-|D|^{-(1-\sigma)}\eta(x-t)}{t^2}
\kappa\left(\frac{1}{|t|}\right) \dt\\
&\quad=\frac{c_{1-\sigma}}{4}  \int_{\xR}\int_{\xR}\left(\frac{2}{|x-z|^\sigma}
-\frac{1}{|x+t-z|^\sigma}-\frac{1}{|x-t-z|^\sigma}\right)
\frac{1}{t^2}\kappa\left(\frac{1}{|t|}\right)\eta(z)\dz\dt.
\end{align*}
Set $u=\gamma(h)g$. It follows from the assumption~\e{assu:F} 
and 
the triangle inequality that
$$
\la g(z)\big(\gamma(h(z))-\gamma(h(x))\big)\ra\les \la u(z)\ra \la h(z)-h(x)\ra.
$$
Consequently, we get
\begin{align*}
\left| \D^{\sigma,\phi}\left(\gamma(h) g\right)(x)- (\gamma(h)\D^{\sigma,\phi}g )(x)\right| 
\les\int_{\xR}K(x-z)\la u(z)\ra\la h(z)-h(x)\ra\dz,
\end{align*}
where
$$
K(y)=\int_{\xR}\left|\frac{2}{|y|^\sigma}-\frac{1}{|y+t|^\sigma}-\frac{1}{|y-t|^\sigma}\right|
\frac{1}{t^2}\kappa\left(\frac{1}{|t|}\right)\dt.
$$
Now, to obtain the claim~\e{b2} (with $z$ replaced by $x-z$) it is sufficient to prove that
\be\label{b3}
K(y)\les \kappa\left(\frac{1}{|y|}\right) \frac{1}{|y|^{1+\sigma}}\cdot
\ee
To do so, we begin by applying \eqref{est:kappa3} to get that 
\begin{equation}
\kappa\left(\frac{1}{|t|}\right)\lesssim \kappa\left(\frac{1}{|y|}\right) \left(1+|y|^{\frac{\sigma}{4}}|t|^{-\frac{\sigma}{4}}\right).
\end{equation}
Thus, we find that, for any $y\in\mathbb{R}\setminus \{0\}$, 
\begin{equation}
\label{Z01}
K(y)\lesssim \kappa\left(\frac{1}{|y|}\right)
\int\left|\frac{2}{|y|^\sigma}-\frac{1}{|y+t|^\sigma}-\frac{1}{|y-t|^\sigma}\right|\left(1+|y|^{\frac{\sigma}{4}}|t|^{-\frac{\sigma}{4}}\right)
\frac{1}{|t|^{2}}\dt.
\end{equation}
Now, the wanted estimate~\e{b3} follows from the fact that, for any $\beta$ in $[0,1)$, 
$$
\int\left|\frac{2}{|y|^\sigma}-\frac{1}{|y+t|^\sigma}-\frac{1}{|y-t|^\sigma}\right|
\frac{\dt}{|t|^{2+\beta}}=\frac{c(\sigma,\beta)}{|y|^{1+\sigma+\beta}},
$$
as can be verified by an immediate homogeneity argument. 
This completes the proof of the claim~\e{b2}. 

Now, using~\e{b1} and Cauchy-Schwarz inequality, we have
\begin{align*}
I&\defn\int \left| \D^{\sigma,\phi}\left(\gamma(h) g\right)- (\gamma(h)\D^{\sigma,\phi}g )\right|^2\dx
\\&\lesssim \int \left(\int \la	 \delta_zh(x)\ra \kappa\(\frac{1}{|z|}\) 
u(x-z)\frac{\dz}{|z|^{1+\sigma}}\right)^2 \dx\\
&\lesssim \int(|D|^{-\frac{1}{2}+2\sigma}(u^2)(x)\left(
\int \la \delta_zh(x)\ra^2\kappa\(\frac{1}{|z|}\)^2 \frac{\dz}{|z|^{\frac{3}{2}}}\right)\dx\\
&\lesssim
  \blA \D^{-\frac{1}{2}+2\sigma}(u^2)\brA_{L^2}
\int \Vert \delta_zh(\cdot)\Vert_{L^4}^2\kappa\(\frac{1}{|z|}\)^2 \frac{\dz}{|z|^{\frac{3}{2}}}\cdot
\end{align*}
Then, by using the boundedness of Riesz potentials on Lebesgue spaces (see~\e{Riesz}) 
and the Sobolev embedding $\dot{H}^{\frac14}(\xR)\hookrightarrow L^4(\xR)$ 
(see~\e{Sobolev}), 
we conclude that
\begin{align*}
I&\les   \Vert u\Vert_{L^{\frac{2}{1-2\sigma}}}^2\int \Vert \delta_zh(\cdot)\Vert_{\dot H^{\frac{1}{4}}}^2
\kappa\(\frac{1}{|z|}\)^2 \frac{\dz}{|z|^{\frac{3}{2}}}
\\&\lesssim 
\Vert u\Vert_{\dot H^{\sigma}}^2 \int |\xi|^{\frac{1}{2}}\left(\int \min\{|\xi||z|,1\}^2\kappa\(\frac{1}{|z|}\)^2
\frac{\dz}{|z|^{\frac{3}{2}}}\right) |\hat h(\xi)|^2 \dxi.
\end{align*}
The estimate~\e{est:kappa3} already used above to find  \eqref{b3} also implies that
$$\int \min\{|\xi||z|,1\}^2\kappa\(\frac{1}{|z|}\)^2
\frac{\dz}{|z|^{\frac{3}{2}}}\lesssim |\xi|^{\frac{1}{2}} \kappa\(|\xi|\)^2.
$$
This gives
$$
I\les
\Vert u\Vert_{\dot H^{\sigma}}^2\int |\xi|\kappa(|\xi|)^2|\hat h(\xi)|^2 \dxi 
\les \Vert u\Vert_{\dot H^{\sigma}}^2\blA\D^{\frac{1}{2},\phi}h\brA_{L^2}^2,
$$ 
where we have used the equivalence $\phi\sim\kappa$ (see~\e{n60b}). 
This implies \eqref{Z1}, which completes the proof.
\end{proof}

\section{Preliminaries about the Muskat equation}\label{S:pMe}
In this section, we recall some simple properties 
about the nonlinearity in the Muskat equation. 
This will serve to rigorously justify the computations in the sequel. 
Firstly, this will clarify the meaning of the integrals with respect to the variable $\alpha$. 
And secondly, we will exploit these results to introduce 
approximate Cauchy problems which are well-posed globally in time. 

\subsection{About the nonlinearity}\label{S:3.4}

Write
\be\label{ae20}
\frac{\partial_x\Delta_\alpha f}{1+\left(\Delta_\alpha f\right)^2}=\partial_x\Delta_\alpha f
-\partial_x\Delta_\alpha f\frac{\left(\Delta_\alpha f\right)^2}{1+\left(\Delta_\alpha f\right)^2}\cdot
\ee
Notice that the Hilbert transform (see~\e{n3}) satisfies
$$
\mathcal{H}u(x)=-\frac{1}{\pi}\mathrm{pv}\int_\xR\Delta_\alpha u\dalpha.
$$
In addition, the fractional laplacian $\D=(-\partial_{xx})^{\mez}$ satisfies $\D=\partial_x\mathcal{H}$. 
Consequently, it follows from~\e{ae20} 
that
\begin{align}\label{Tf1}
\partial_tf+\D f = \mathcal{T}(f)f,
\end{align}
where $\mathcal{T}(f)$ is the operator defined by
\be\label{def:T(f)f}
\mathcal{T}(f)g = -\frac{1}{\pi}\int_\xR\left(\partial_x\Delta_\alpha g\right)
\frac{\left(\Delta_\alpha f\right)^2}{1+\left(\Delta_\alpha f\right)^2}\dalpha.
\ee
Then we have the following elementary result, which is Proposition~$2.3$ in~\cite{Alazard-Lazar}.
\begin{proposition}[from~\cite{Alazard-Lazar}]\label{P:continuity}
\begin{enumerate}[i)]
\item\label{Prop:low1} For all $f$ in $\dot{H}^{1}(\xR)$ and $g$ in $\dot{H}^{\tdm}(\xR)$, 
the function
$$
\alpha\mapsto \Delta_\alpha g_x \ \frac{(\Delta_\alpha f)^2}{1+(\Delta_\alpha f)^2}
$$
belongs to $L^1_\alpha(\xR;L^2_x(\xR))$. Therefore, 
$\mathcal{T}(f)g$ belongs to $L^2(\xR)$. 
Moreover, there is a positive constant $C$ such that
\be\label{nTL2}
\lA \mathcal{T}(f)g\rA_{L^2}\le C \lA f\rA_{\dot{H}^1}\lA g\rA_{\dot{H}^\tdm}.
\ee

\item \label{Prop:low3} For all $\delta\in [0,1/2)$, there is a positive 
constant $C$ such that, 
for all functions $f_1,f_2$ in $\dot{H}^{1-\delta}(\xR)\cap \dot{H}^{\tdm+\delta}(\xR)$, 
\be\label{pl:2}
\lA (\mathcal{T}(f_1)-\mathcal{T}(f_2))f_2\rA_{L^2}
\le C \lA f_1-f_2\rA_{\dot{H}^{1-\delta}}\lA f_2\rA_{\dot{H}^{\tdm+\delta}}.
\ee
In particular,
$$
\lA \mathcal{T}(f)f \rA_{L^2}
\le C \lA f\rA_{\dot{H}^{\frac34}}\lA f_2\rA_{\dot{H}^{\frac74}}.
$$
\item \label{Prop:low2} The map $f\mapsto \mathcal{T}(f)f$ is locally Lipschitz from 
$\dot{H}^1(\xR)\cap \dot{H}^{\tdm}(\xR)$ to $L^2(\xR)$.
\end{enumerate}
\end{proposition}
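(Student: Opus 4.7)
I would establish all three assertions by combining Minkowski's inequality~\e{Min}, H\"older in~$x$, the pointwise bounds $F(z)\defn z^2/(1+z^2)\le\min(z^2,|z|,1)$ and the Lipschitz estimate $|F(a)-F(b)|\le|a-b|$, together with Fourier-side bounds on the finite differences $\Delta_\alpha u$ of the type appearing in Lemma~\ref{L:2.7}.

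For statement $i)$, the first step is to apply Minkowski's inequality to bring the $L^2_x$ norm inside the $\alpha$-integral in~\e{def:T(f)f}, reducing matters to proving
$$
\int_\xR\left\|\Delta_\alpha g_x\cdot F(\Delta_\alpha f)\right\|_{L^2_x}\dalpha \lesssim\lA f\rA_{\dot H^1}\lA g\rA_{\dot H^{\tdm}},
$$
and then split the $\alpha$-integration at $|\alpha|=1$. On $|\alpha|\ge 1$, I would use $F\le|\Delta_\alpha f|$ combined with the Fourier decay bound $\|\Delta_\alpha g_x\|_{L^2_x}\lesssim|\alpha|^{-1/2}\lA g\rA_{\dot H^{\tdm}}$, obtained by splitting the spectral integral at $|\xi|=1/|\alpha|$. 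On $|\alpha|\le 1$, I would break the integrand via H\"older into $L^4_x\times L^4_x$ factors, bound $\|F(\Delta_\alpha f)\|_{L^4_x}$ using $F(z)\le|z|$ and Sobolev embedding $\dot H^{1/4}(\xR)\hookrightarrow L^4(\xR)$, and apply a Lemma~\ref{L:2.7}-type interpolation of $\|\Delta_\alpha u\|_{\dot H^{1/4}}$ between $\|u\|_{\dot H^{s_1}}$ and $\|u\|_{\dot H^{s_2}}$ to extract just enough powers of $|\alpha|$ for $\alpha$-integrability at the origin.

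For statement $ii)$, writing
$$
(\mathcal T(f_1)-\mathcal T(f_2))f_2=-\tfrac{1}{\pi}\int_\xR\partial_x\Delta_\alpha f_2\cdot\bigl[F(\Delta_\alpha f_1)-F(\Delta_\alpha f_2)\bigr]\dalpha,
$$
the Lipschitz bound $|F(\Delta_\alpha f_1)-F(\Delta_\alpha f_2)|\le|\Delta_\alpha(f_1-f_2)|$ reduces the analysis to the pattern used in $i)$, with $\Delta_\alpha(f_1-f_2)$ now playing the role of $F(\Delta_\alpha f)$. The parameter $\delta\in[0,1/2)$ appears because H\"older leaves freedom to distribute Sobolev regularity between the factors $\Delta_\alpha(f_1-f_2)$ and $\partial_x\Delta_\alpha f_2$, via the embeddings $\dot H^{1/2-1/p}\hookrightarrow L^p$; shifting $\delta>0$ gains $\dot H^{1-\delta}$-regularity on the difference at the price of requiring $\dot H^{3/2+\delta}$ on $f_2$, while preserving the overall scaling. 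Statement $iii)$ is then immediate from the telescoping decomposition $\mathcal T(f_1)f_1-\mathcal T(f_2)f_2=\mathcal T(f_1)(f_1-f_2)+(\mathcal T(f_1)-\mathcal T(f_2))f_2$ by applying $i)$ to the first term and $ii)$ with $\delta=0$ to the second.

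The main obstacle is $\alpha$-integrability near zero in $i)$: the factor $\Delta_\alpha g_x$ can grow like $|\alpha|^{-1/2}$, reflecting that $g\in\dot H^{\tdm}$ is half a derivative short of the natural $\dot H^2$ regularity, while $F(\Delta_\alpha f)$ is pointwise only bounded by~$1$. The delicate point is to coordinate the H\"older exponents and the pointwise bounding exponent $\theta\in[0,1]$ in $F(z)\le|z|^{2\theta}$ with the Fourier-side bounds on $\|\Delta_\alpha u\|_{L^p}$ so as to produce a power of $|\alpha|$ strictly less than $1$ at the origin. This coordination, rather than any single estimate, is the crux of the argument.
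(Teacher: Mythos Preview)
The paper does not prove this proposition: it is quoted as Proposition~2.3 of~\cite{Alazard-Lazar} and stated without proof. There is therefore no argument in the present paper to compare against.

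Your plan is sound in outline, but the treatment of the region $|\alpha|\ge 1$ in part~$i)$ is not as written. You propose to pair $F\le|\Delta_\alpha f|$ with the bound $\|\Delta_\alpha g_x\|_{L^2}\lesssim|\alpha|^{-1/2}\|g\|_{\dot H^{3/2}}$; but placing $\Delta_\alpha g_x$ in $L^2_x$ forces $\Delta_\alpha f$ into $L^\infty_x$, which $f\in\dot H^1$ does not give, while using $F\le 1$ instead leaves only $|\alpha|^{-1/2}$ decay, non-integrable at infinity. A clean fix avoids the splitting altogether: use H\"older $L^2_x\le L^4_x\times L^4_x$ and $\dot H^{1/4}\hookrightarrow L^4$ uniformly in $\alpha$, then Cauchy--Schwarz in $\alpha$ with weight $|\alpha|^{1/2}$. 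A direct Fourier computation (a scaling argument as in Lemma~\ref{L:2.7}) gives
\[
\int_\xR \|\Delta_\alpha g_x\|_{\dot H^{1/4}}^2\,|\alpha|^{1/2}\,\dalpha \sim \|g\|_{\dot H^{3/2}}^2,
\qquad
\int_\xR \|\Delta_\alpha f\|_{\dot H^{1/4}}^2\,|\alpha|^{-1/2}\,\dalpha \sim \|f\|_{\dot H^{1}}^2,
\]
both $\alpha$-integrals converging at $0$ and at $\infty$. This yields the $L^1_\alpha(L^2_x)$ bound and~\e{nTL2} in one stroke. Your arguments for $ii)$ and $iii)$ are fine: the Lipschitz bound on $F$ reduces $ii)$ to the same bilinear pattern with exponents shifted by $\delta$, and $iii)$ follows by the telescoping you indicate.
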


\subsection{Approximate Cauchy problems}
We now take up the second question raised at the beginning of this section, 
that of defining approximate Muskat equations. The goal is to introduce equations 
for which the Cauchy problem is easily studied, 
and whose solutions are expected to converge to solutions of the original Muskat equation. 
There are many possible ways to do so. 
The next proposition introduces a parabolic regularization of the Muskat equation, depending on some 
viscosity parameter $\eps>0$, which is convenient for our purposes. 
The following result states that these equations are well-posed for any fixed~$\eps>0$. 
Our main task in the next sections will be to obtain estimates which are uniform in~$\eps$. 
\begin{proposition}\label{P:3.6}
Consider an initial data $f_0$ in $H^\tdm(\xR)$. For any $\eps>0$, 
the Cauchy problem
\begin{align}\label{main:acp}
\partial_tf+\varepsilon\D^{8}f+\D f = \mathcal{T}(f)f\quad;\quad 
f\arrowvert_{t=0}=f_0,
\end{align}
has a unique solution $f$ in $C^0([0,+\infty);H^\tdm(\xR))\cap L^2([0,+\infty);\dot{H}^2(\xR))$. 
In addition $f$ is $C^\infty$ on $(0,+\infty)\times\xR$.
\end{proposition}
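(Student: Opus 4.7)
I would treat this as a standard hyperparabolic Cauchy problem. Fix $\varepsilon>0$ and write the equation as $\partial_t f + L_\varepsilon f = \mathcal{T}(f)f$, where $L_\varepsilon := \varepsilon |D|^8 + |D|$ is positive self-adjoint on $L^2(\xR)$. Its semigroup satisfies the smoothing bound
\begin{equation*}
\lA e^{-tL_\varepsilon}g\rA_{\dot{H}^s} \lesssim (\varepsilon t)^{-(s-\sigma)/8}\lA g\rA_{\dot{H}^\sigma}\qquad (s\ge\sigma),
\end{equation*}
and the Duhamel representation reads $f(t) = e^{-tL_\varepsilon} f_0 + \int_0^t e^{-(t-s)L_\varepsilon}\mathcal{T}(f(s))f(s)\,\ds$. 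Since the regularization is of order~$8$ while $\mathcal{T}(f)f$ loses at most one derivative, the plan is standard: (a)~local existence by contraction; (b)~a priori $H^\tdm$-bound for global existence; (c)~parabolic bootstrap for smoothness.

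For (a), I would apply the Banach fixed-point theorem in a ball of
\begin{equation*}
X_T := C^0([0,T]; H^{\tdm}(\xR)) \cap L^2([0,T]; \dot{H}^{11/2}(\xR)),
\end{equation*}
with $T$ small depending on $\varepsilon$ and on $\lA f_0\rA_{H^\tdm}$. Proposition~\ref{P:continuity}\,(\ref{Prop:low1}) gives $\lA\mathcal{T}(f)f\rA_{L^2}\lesssim \lA f\rA_{\dot{H}^1}\lA f\rA_{\dot{H}^{\tdm}}\lesssim \lA f\rA_{H^\tdm}^2$; combined with the semigroup smoothing this yields the self-mapping property with a gain of $\sqrt{\varepsilon T}$, and the Lipschitz estimate of Proposition~\ref{P:continuity}\,(\ref{Prop:low2}) closes the contraction. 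Uniqueness in the natural class follows from a standard $L^2$-difference argument using Proposition~\ref{P:continuity}\,(\ref{Prop:low3}).

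For (b), the $H^\tdm$ energy identity obtained by pairing the equation with $(1-\partial_{xx})^\tdm f$ reads
\begin{equation*}
\frac{1}{2}\fract \lA f\rA_{H^\tdm}^2 + \varepsilon \lA f\rA_{H^{11/2}}^2 + \lA f\rA_{\dot{H}^2}^2 + \lA f\rA_{\dot{H}^{\mez}}^2 = \L{\mathcal{T}(f)f,(1-\partial_{xx})^\tdm f},
\end{equation*}
where the last two terms on the left come from the $|D|$ contribution via Plancherel. Bounding the right-hand side by $C\lA f\rA_{H^\tdm}^2\lA f\rA_{H^3}$ (Cauchy--Schwarz and Proposition~\ref{P:continuity}\,(\ref{Prop:low1})), interpolating $\lA f\rA_{H^3}\lesssim \lA f\rA_{H^\tdm}^{5/8}\lA f\rA_{H^{11/2}}^{3/8}$, and using Young to absorb a small multiple of $\varepsilon \lA f\rA_{H^{11/2}}^2$, I would obtain an ODE inequality $\fract \lA f\rA_{H^\tdm}^2 \lesssim_\varepsilon 1+\lA f\rA_{H^\tdm}^{N}$, precluding finite-time blow-up and, upon integration, yielding the $L^2(\dot{H}^2)$ control on $[0,\infty)$. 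Step~(c) then follows from a standard parabolic bootstrap: the semigroup smoothing iterated with higher-order analogues of Proposition~\ref{P:continuity} raises the Sobolev regularity of $f$ arbitrarily after any positive time, giving $f\in C^\infty((0,\infty)\times\xR)$. The main obstacle is not conceptual but bookkeeping: aligning function spaces so that the $\varepsilon$-gains from the semigroup match the cost of the nonlinearity; since the $|D|^8$ dissipation vastly exceeds any derivative loss in $\mathcal{T}(f)f$, this is routine once the spaces are correctly chosen.
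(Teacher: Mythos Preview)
Your local-existence step~(a) and the smoothing bootstrap~(c) are fine and match the paper's treatment. The gap is in step~(b): the ODE inequality you derive does \emph{not} preclude finite-time blow-up. With your interpolation $\lA f\rA_{H^3}\lesssim \lA f\rA_{H^{\tdm}}^{5/8}\lA f\rA_{H^{11/2}}^{3/8}$, the right-hand side becomes $C\lA f\rA_{H^\tdm}^{21/8}\lA f\rA_{H^{11/2}}^{3/8}$; Young's inequality (with exponents $16/3$ and $16/13$) then yields
\[
\fract \lA f\rA_{H^\tdm}^2 \lesssim_\varepsilon \lA f\rA_{H^\tdm}^{42/13},
\]
i.e.\ $y'\lesssim y^{21/13}$ for $y=\lA f\rA_{H^\tdm}^2$. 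Since $21/13>1$, this is a superlinear ODE and its solutions may blow up in finite time. So ``precluding finite-time blow-up'' is simply false as stated.

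What is missing is a separate, \emph{a priori} bound on a lower-order norm. The paper exploits a structural property of the Muskat nonlinearity: the $L^2$-norm is a Lyapunov functional, because
\[
\int_\xR\big(\D f-\mathcal{T}(f)f\big)f\,\dx
=\iint_{\xR^2}\log\sqrt{1+(\Delta_\alpha f)^2}\,\dx\,\dalpha\ge 0,
\]
so that $\lA f(t)\rA_{L^2}\le \lA f_0\rA_{L^2}$ for all $t$. With this in hand, the paper interpolates the product $\lA f\rA_{\dot H^3}\lA f\rA_{\dot H^1}\lA f\rA_{\dot H^\tdm}$ between $L^2$ and $\dot H^{11/2}$ rather than between $H^\tdm$ and $H^{11/2}$; the exponents then conspire to give exactly $\lA f\rA_{L^2}^2\lA f\rA_{\dot H^{11/2}}$, which is \emph{linear} in the top norm. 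After Young and the $L^2$-bound one obtains
\[
\fract \lA f\rA_{\dot H^\tdm}^2 \lesssim \varepsilon^{-1}\lA f_0\rA_{L^2}^4,
\]
a constant right-hand side, hence at most linear growth in time and true global existence. Your argument can be repaired by inserting this $L^2$-Lyapunov step and reinterpolating accordingly; without it, the energy estimate does not close.
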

\begin{proof}
Given an initial data $f_0$ in $H^\tdm(\xR)$, the existence of a unique solution $f$  locally in time 
follows at once from Proposition~\ref{P:continuity}, by applying the classical methods for 
nonlinear parabolic equations. 
Then, to prove that the solution $f$ exists globally in time, it will suffice to estimate its 
$H^{\tdm}(\xR)$ norm. 

We recall first that the square of the $L^2$-norm is a Lyapunov functional. Indeed, 
by \cite[Section 2]{CCGRPS-JEMS2013}, we have
$$
\int_\xR\left(\D f-\mathcal{T}(f)f\right)f(x)\dx
=\iint_{\xR^2}\log\left(\sqrt{1+\frac{(f(t,x)-f(t,x-\alpha))^2}{\alpha^2}}\right)\dx\dalpha\ge 0.
$$
This implies that
\begin{equation}\label{nac:1}
\mez\fract \lA f\rA_{L^2}^2+\varepsilon \lA f\rA_{\dot H^{4}}^2\leq 0. 
\end{equation}
Now we multiply the equation \e{main:acp} by $\D^3$ and 
use Plancherel's identity to obtain the following Sobolev energy estimate:
$$
\mez\fract \lA f\rA_{\dot H^{\tdm}}^2+\varepsilon \lA f\rA_{\dot H^{\frac{11}{2}}}^2
+ \lA f\rA_{\dot H^{2}}^2\leq   \lA f\rA_{\dot H^{3}} \lA \mathcal{T}(f)f\rA_{L^2}.
$$
Now, apply~\e{nTL2} to obtain 
$$
\mez\fract \lA f\rA_{\dot H^{\tdm}}^2+\varepsilon \lA f\rA_{\dot H^{\frac{11}{2}}}^2
+ \lA f\rA_{\dot H^{2}}^2\leq   C\lA f\rA_{\dot H^{3}}  \lA f\rA_{\dot{H}^{1}}\lA f\rA_{\dot{H}^{\frac32}}.
$$
One may estimate the right-hand side above by means of the classical 
interpolation inequality in Sobolev spaces. This gives
$$
\mez\fract \lA f\rA_{\dot H^{\tdm}}^2+\varepsilon \lA f\rA_{\dot H^{\frac{11}{2}}}^2
+ \lA f\rA_{\dot H^{2}}^2\les 
\lA f\rA_{L^2}^2\lA f\rA_{\dot H^{\frac{11}{2}}}.
$$
Hence, using Young's inequality,
\be\label{nac:2}
\fract \lA f\rA_{\dot H^{\tdm}}^2\les \varepsilon^{-1}
\lA f\rA_{L^2}^4.
\ee
By combining \e{nac:1} and \e{nac:2}, we get the wanted 
Sobolev estimates, which implies that, for any fixed $\eps>0$, the solution exists globally in time. 
\end{proof}

\subsection{A reformulation of the Muskat equation}

In this paragraph, we recall a key decomposition of the Muskat equation, as introduced in~\cite{Alazard-Lazar}. 

\begin{proposition}[from~\cite{Alazard-Lazar}]\label{P:4.1}
$i)$ The Muskat equation~\e{a1} can be written under the form
\be\label{aW}
\partial_t f+W(f)\partial_x f+\frac{1}{1+f_x^2}\D f=R(f)f,
\ee
where the coefficient $W(f)$ the operator $R(f)$ are defined by
\be\label{defi:V}
\begin{aligned}
W(f)&\defn\frac{1}{\pi}\int_\xR\frac{\mathcal{O}\left(\alpha,.\right)}{\alpha}\dalpha,\\
R(f)g&\defn-\frac{1}{\pi}\int_\xR\left(\partial_x\Delta_\alpha g\right)
\left(\mathcal{E}\left(\alpha,\cdot\right)-\frac{(\partial_xf)^2}{1+(\partial_xf)^2}\right)\dalpha\\
&\quad+\frac{1}{\pi}\int_\xR\frac{\partial_x g(\cdot-\alpha)}{\alpha}\mathcal{O}\left(\alpha,\cdot\right) \dalpha,
\end{aligned}
\ee
with 
\begin{align}
&\mathcal{O}\left(\alpha,\cdot\right)
= \frac{1}{2}\frac{\left(\Delta_\alpha f\right)^2}{1+\left(\Delta_\alpha f\right)^2}
-\frac{1}{2}\frac{\left(\Delta_{-\alpha} f\right)^2}{1+\left(\Delta_{-\alpha} f\right)^2},\label{Oss}\\&
\mathcal{E}\left(\alpha,\cdot\right) = \frac{1}{2}\frac{\left(\Delta_\alpha f\right)^2}{1+\left(\Delta_\alpha f\right)^2}
+\frac{1}{2}\frac{\left(\Delta_{-\alpha} f\right)^2}{1+\left(\Delta_{-\alpha} f\right)^2}\cdot\label{Ess}
\end{align}
$ii)$ Similarly, the approximate Muskat problem~\e{main:acp} is equivalent to
\begin{align}\label{main:acp-bis}
\partial_tf+\varepsilon\D^{8}f+W(f)\partial_x f+\frac{1}{1+f_x^2}\D f=R(f)f.
\end{align}
\end{proposition}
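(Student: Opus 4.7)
The plan is to derive the reformulation by a purely algebraic rearrangement of the identity $\partial_t f + \D f = \mathcal{T}(f) f$ already established in \e{Tf1}. The starting observation is that, by the very definitions \e{Oss} and \e{Ess},
\[
\frac{(\Delta_\alpha f)^2}{1+(\Delta_\alpha f)^2} = \mathcal{E}(\alpha,\cdot) + \mathcal{O}(\alpha,\cdot),
\]
so that I would first split $\mathcal{T}(f)f$ into its even and odd $\alpha$-parts,
\[
\mathcal{T}(f)f = -\frac{1}{\pi}\int_\xR (\partial_x\Delta_\alpha f)\,\mathcal{E}(\alpha,\cdot)\,\dalpha -\frac{1}{\pi}\int_\xR (\partial_x\Delta_\alpha f)\,\mathcal{O}(\alpha,\cdot)\,\dalpha.
\]

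To recover the coefficient $\frac{1}{1+f_x^2}$ in front of the fractional Laplacian, I extract from the even integral its $\alpha=0$ value $\mathcal{E}(0,\cdot)=\frac{f_x^2}{1+f_x^2}$ by adding and subtracting it. Combined with the Hilbert-transform identity $\frac{1}{\pi}\pv\int \partial_x\Delta_\alpha f\,\dalpha = -\mathcal{H}f_x = -\D f$ recalled in \S\ref{S:2.2}, this gives
\[
-\frac{1}{\pi}\int (\partial_x\Delta_\alpha f)\,\mathcal{E}\,\dalpha = -\frac{1}{\pi}\int (\partial_x\Delta_\alpha f)\Bigl[\mathcal{E}-\tfrac{f_x^2}{1+f_x^2}\Bigr]\dalpha + \tfrac{f_x^2}{1+f_x^2}\,\D f.
\]
Moving $\frac{f_x^2}{1+f_x^2}\D f$ to the left-hand side turns $\D f$ into $\frac{1}{1+f_x^2}\D f$, and leaves exactly the first integral appearing in the definition \e{defi:V} of $R(f)f$.

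For the odd contribution, I would use $\partial_x\Delta_\alpha f(x) = f_x(x)/\alpha - f_x(x-\alpha)/\alpha$ and distribute against $\mathcal{O}(\alpha,\cdot)$. The first piece factors $f_x(x)$ outside the integral and produces the convective term $-W(f)f_x$, with $W(f)$ as in \e{defi:V}, while the second piece is precisely the remaining integral in the definition of $R(f)f$. Collecting these rearrangements proves $(i)$. Part $(ii)$ is then immediate: the viscous regularization $\varepsilon\D^{8}f$ is linear and untouched by the nonlinear algebra, so adding it to both sides of the identity in $(i)$ yields \e{main:acp-bis}.

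The main point requiring care is the legitimacy of the rearrangement, i.e.\ verifying that each piece we split off is an absolutely integrable integral rather than a delicate cancellation of divergent principal values. I would check this by exploiting that $\mathcal{O}(\alpha,\cdot)$ vanishes like $|\alpha|$ as $\alpha\to 0$ (so $\mathcal{O}/\alpha$ is bounded near the origin) and decays as $|\alpha|^{-2}$ at infinity, together with the analogous estimate $\mathcal{E}(\alpha,\cdot)-\frac{f_x^2}{1+f_x^2}=O(|\alpha|)$ as $\alpha\to 0$, which tames the apparent singularity of $\partial_x\Delta_\alpha f$. Combined with the $L^1_\alpha L^2_x$ integrability already granted by Proposition~\ref{P:continuity}, this justifies each displayed splitting termwise and completes the proof.
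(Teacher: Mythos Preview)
Your proposal is correct and follows essentially the same route as the paper's proof: split $F_\alpha=\mathcal{E}+\mathcal{O}$, subtract the $\alpha=0$ value of $\mathcal{E}$ to extract the $\frac{f_x^2}{1+f_x^2}\D f$ term, and for the odd part write $\partial_x\Delta_\alpha f=\alpha^{-1}f_x(x)-\alpha^{-1}f_x(x-\alpha)$ to separate the convective piece $-W(f)f_x$ from the remainder. The paper states this more tersely via the identity \e{Tf2} for a general $g$, leaving the odd-part split implicit; your version makes that step explicit and adds a short (informal but correct) discussion of why each split integral is absolutely convergent, which the paper omits since the result is quoted from \cite{Alazard-Lazar}.
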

\begin{proof}
Recall from~\S\ref{S:3.4} that the Muskat equation \e{a1} reads
$$
\partial_tf+\D f = \mathcal{T}(f)f,
$$
where $\mathcal{T}(f)$ is the operator defined by
$$
\mathcal{T}(f)g = -\frac{1}{\pi}\int_\xR\left(\partial_x\Delta_\alpha g\right)
\frac{\left(\Delta_\alpha f\right)^2}{1+\left(\Delta_\alpha f\right)^2}\dalpha.
$$
Then splits the coefficient 
$$
F_\alpha\defn\frac{\left(\Delta_\alpha f\right)^2}{1+\left(\Delta_\alpha f\right)^2}
$$
into its odd and even components $\mathcal{O}\left(\alpha,\cdot\right)$ 
and $\mathcal{E}\left(\alpha,\cdot\right)$, as defined in the statement of the proposition. 
Since $\Delta_\alpha f(x)$ converges to $f_x(x)$ when $\alpha$ 
goes to $0$, we further 
decompose $\mathcal{E}\left(\alpha,\cdot\right)$ as
$$
\mathcal{E}\left(\alpha,\cdot\right) =\frac{(\partial_xf)^2}{1+(\partial_xf)^2}
+\left(\mathcal{E}\left(\alpha,\cdot\right) -\frac{(\partial_xf)^2}{1+(\partial_xf)^2}\right).
$$
It follows that
\begin{equation}\label{Tf2}
\mathcal{T}(f)g=\frac{(\partial_xf)^2}{1+(\partial_xf)^2}\D g-W(f)\partial_x g+R(f)g,
\end{equation}
which implies the wanted result~\e{aW}. The same argument proves 
also the second statement.
\end{proof}

The main application that we will work out of the previous formulation 
is an $L^2$-energy estimate for the 
weighted fractional derivatives $\D^{\tdm,\phi}f$.
\begin{corollary}\label{C:5.2}
Consider a function $\phi$ defined by~\e{n10}, for some admissible weight~$\kappa$. 
For any $\eps\in [0,1]$ and for any smooth solution $f$ of the approximate 
Muskat equation~\e{main:acp}, 
there holds
\begin{align*}
\frac{1}{2}\fract \blA D^{\tdm,\phi}f\brA_{L^2}^2
+ \int_\xR \frac{\bla \D^{2,\phi}f\bra^2}{1+(\partial_x f)^2} \dx+\eps\lA f\rA_{\dot{H}^4}^2&=
(I)+(II)+(III)\quad \text{with}\\[1ex]
(I)&\defn-\frac{1}{2} \Big\langle \left[\mathcal{H}, W(f)\right ] \D^{2,\phi}f,\D^{2,\phi}f\Big\rangle,\\[1ex]
(II)&\defn\big\langle R(f)(\D^{1,\phi}f), \D^{2,\phi}f\big\rangle,\\[1ex]
(III)&\defn\Big\langle\left[\D^{1,\phi},\mathcal{T}(f)\right]f, \D^{2,\phi}f\Big\rangle,
\end{align*}
where $[A,B]$ denotes the commutator $A\circ B-B\circ A$ of two operators.
\end{corollary}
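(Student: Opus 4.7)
The identity is an energy identity at the level of $|D|^{3/2,\phi}$, so the natural strategy is to apply the weighted fractional Laplacian $|D|^{1,\phi}$ to the approximate Muskat equation~\eqref{main:acp} and pair the resulting equation with $|D|^{2,\phi}f$ in $L^2$. Because $|D|^{s,\phi}$ is a Fourier multiplier, Plancherel's identity gives $\langle|D|^{1,\phi}u,|D|^{2,\phi}u\rangle_{L^2}=\||D|^{3/2,\phi}u\|_{L^2}^2$, so the time derivative produces exactly $\tfrac12\frac{d}{dt}\||D|^{3/2,\phi}f\|_{L^2}^2$; the viscous regularization $\varepsilon|D|^8f$ contributes a nonnegative dissipation which accounts for the $\varepsilon\|f\|_{\dot H^4}^2$ term; and the dispersive term $|D|f$ yields $\||D|^{2,\phi}f\|_{L^2}^2$. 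All these manipulations are justified by the smoothness of $f$ on $(0,\infty)\times\xR$ afforded by Proposition~\ref{P:3.6}, together with the $L^2$-continuity of $\mathcal{T}(f)f$ from Proposition~\ref{P:continuity}.

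On the right-hand side the key step is to split
\[
|D|^{1,\phi}\mathcal{T}(f)f=\mathcal{T}(f)|D|^{1,\phi}f+\bigl[|D|^{1,\phi},\mathcal{T}(f)\bigr]f,
\]
so that the commutator, once paired with $|D|^{2,\phi}f$, is by definition the term $(III)$. For the remaining piece I invoke the decomposition established in the proof of Proposition~\ref{P:4.1} (formula~\eqref{Tf2}),
\[
\mathcal{T}(f)g=\frac{f_x^2}{1+f_x^2}|D|g-W(f)\partial_x g+R(f)g,
\]
applied with $g=|D|^{1,\phi}f$, and I treat the three resulting subterms separately. The first subterm, paired with $|D|^{2,\phi}f$, produces $\int\frac{f_x^2}{1+f_x^2}(|D|^{2,\phi}f)^2\,dx$; combining it with the $\||D|^{2,\phi}f\|_{L^2}^2$ coming from the left-hand side and using the algebraic identity $1-\tfrac{f_x^2}{1+f_x^2}=\tfrac{1}{1+f_x^2}$ yields the dissipation $\int\frac{(|D|^{2,\phi}f)^2}{1+f_x^2}\,dx$ on the left-hand side of the claim. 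The third subterm is recognized verbatim as $(II)$.

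The remaining drift piece $-W(f)\partial_x|D|^{1,\phi}f$ is the only one requiring a computation. Using the identity $\partial_x=-\mathcal{H}|D|$, which follows from $|D|=\mathcal{H}\partial_x$ and $\mathcal{H}^2=-\mathrm{Id}$, the pairing against $|D|^{2,\phi}f$ becomes $\langle W(f)\mathcal{H}v,v\rangle$ with $v=|D|^{2,\phi}f$. The skew-adjointness $\mathcal{H}^*=-\mathcal{H}$ on $L^2$ then permits the symmetrization
\[
\langle W(f)\mathcal{H}v,v\rangle=\tfrac12\bigl(\langle W(f)\mathcal{H}v,v\rangle-\langle v,\mathcal{H}(W(f)v)\rangle\bigr)=-\tfrac12\bigl\langle[\mathcal{H},W(f)]v,v\bigr\rangle,
\]
which is precisely $(I)$. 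The argument is essentially algebraic bookkeeping once Propositions~\ref{P:continuity} and~\ref{P:4.1} are in hand; the one mildly delicate point, and hence the main obstacle, is verifying that each intermediate quantity (in particular $W(f)v$ and $R(f)|D|^{1,\phi}f$) lies in $L^2$ so that the commutator and integration-by-parts identities are rigorously justified. This integrability is precisely what the continuity properties of $W$, $R$ and $\mathcal{T}$ from Proposition~\ref{P:continuity}, combined with the regularity guaranteed by the viscous regularization, are designed to deliver.
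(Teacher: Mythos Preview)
Your proof is correct and follows essentially the same route as the paper: commute $|D|^{1,\phi}$ with the equation written as $\partial_t f+\eps|D|^8f+|D|f=\mathcal{T}(f)f$, pair with $|D|^{2,\phi}f$, then apply the decomposition~\eqref{Tf2} to $\mathcal{T}(f)|D|^{1,\phi}f$ and symmetrize the drift term via $\partial_x=-\mathcal{H}|D|$ and $\mathcal{H}^*=-\mathcal{H}$. The paper's proof is organized in the same order and uses the same identities (see in particular~\eqref{aW2} and~\eqref{w1}), so there is nothing substantively different to compare.
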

\begin{remark}\label{R:dpb}
$i)$ Hereafter, we say that $f=f(t,x)$ is a smooth function if it satisfies the conclusion 
of Proposition~\ref{P:3.6}, that is:
$$
f\in C^0([0,+\infty);H^\tdm(\xR))\cap L^2([0,+\infty);\dot{H}^2(\xR))\cap 
C^\infty\big((0,+\infty)\times\xR\big).
$$
$ii)$ As explained in \S\ref{S:1.2}, observe that one does not control $\int \bla \D^{2,\phi}f\bra^2 \dx$. 
Instead, we merely control
$$
\int_\xR \frac{\bla \D^{2,\phi}f\bra^2}{1+(\partial_x f)^2} \dx.
$$
The key point is that 
$1+(\partial_xf)^2$ is not bounded from above for solutions $f$ which are $L^\infty$ in times 
with values in $H^\tdm(\xR)$. 
This is why the parabolic feature of the Muskat equation degenerates in the critical regime.
\end{remark}
\begin{proof}
By commuting $\D^{1,\phi}$ with the equation~\e{Tf1} and then using the 
decomposition~\e{Tf2} with $g=\D^{1,\phi}f$, we find that
\be\label{aW2}
\Big(\partial_t +W(f)\partial_x +\frac{1}{1+f_x^2}\D+\eps\D^8
-R(f)\Big)\D^{1,\phi}f=\big[ \D^{1,\phi},\mathcal{T}(f)\big]f.
\ee
Then multiply the equation~\e{aW2} by $\D^{2,\phi}f$ and use 
Plancherel's identity to write
\begin{equation*}
\big\langle\partial_t \D^{1,\phi}f,\D^{2,\phi} f\big\rangle
=\frac{1}{2}\fract \blA \D^{\tdm,\phi}f\brA_{L^2}^2.
\end{equation*}
If follows that
\begin{multline*}
\frac{1}{2}\fract \blA D^{\tdm,\phi}f\brA_{L^2}^2+\eps\lA f\rA_{\dot{H}^4}^2
+ \int_\xR \frac{\bla \D^{2,\phi}f\bra^2}{1+(\partial_x f)^2} \dx\\
=-\langle W(f)\partial_x \D^{1,\phi}f, \D^{2,\phi}f\rangle+
(II)+(III),
\end{multline*}
where $(II)$ and $(III)$ are as defined in the statement. 

To conclude, we notice that, since $\partial_x=-\mathcal{H}\D$ and 
since $\mathcal{H}^*=-\mathcal{H}$, one has 
\be\label{w1}
\begin{aligned}
\langle W(f)\partial_x \D^{1,\phi}f, \D^{2,\phi}f\rangle&= \frac{1}{2} \Big\langle \left[\mathcal{H}, W(f)\right ] \D^{2,\phi}f,\D^{2,\phi}f\Big\rangle=-(I).
\end{aligned}
\ee
This completes the proof.
\end{proof}

\section{Paralinearization of the Muskat equation}\label{S:5}
With these preliminaries established, we start the analysis of the 
nonlinearity in the Muskat equation. 
More precisely, we want to estimate the three terms which appear in Corollary~\ref{C:5.2}, 
in terms 
of the quantities
$$
\sup_{t\in [0,T]}\blA \D^{\tdm,\phi}f(t)\brA_{L^2}^2,\qquad
\int_0^{T}\!\!\int_\xR \frac{\bla \D^{2,\phi}f(t,x)\bra^2}{1+(\partial_xf(t,x))^2} \dx\dt.
$$
To achieve this goal, as the title of this section attempts to indicate, 
we shall develop several tools needed to commute the 
weighted fractional Laplacians with the Muskat equation. 
As we have explained in the introduction, the main difficulty 
is to exhibit a null-type structure. 

Let us introduce three notations which will be used 
continually in this section. 

\begin{enumerate}[(i)]
\item We use the bracket notation:
\begin{equation*}
\L{a}=\sqrt{1+a^2}.
\end{equation*}
To compensate for the degeneracy of the parabolic behavior 
for large slopes mentioned in Remark~\ref{R:dpb}, 
we want to prove estimates which involves the $L^2(\dx/\L{f_x})$-norm (instead of the 
$L^2(\dx)$-norm).
\item 
A key ingredient of our analysis is to decompose the frequency space. 
To do so, 
we will use the following notation. Given a parameter $\lambda>0$, we set
\be\label{defi:Hlam}
\lA f\rA_{\dot{H}^{\tdm}_\lambda}
\defn\left(\int_{\xR} \min\{\lam|\xi|,1\}^{\frac{1}{100}} |\xi|^{3} |\hat f(\xi)|^2\dxi \right)^{\frac{1}{2}}.
\ee
\item 
To truncate the frequency domain, it will be convenient to consider a smooth 
bump function $\chi\colon\xR\to [0,1]$ satisfying $\chi(x)=\chi(-x)$ and 
such that $\chi(x)=1$ for $x\in [-1,1]$ and $\chi(x)=0$ if $\la x\ra\ge 2$.  
\end{enumerate}

\subsection{The commutator with the Hilbert transform}
Our first result will allow us to estimate the term $(I)$ which appears in Corollary~\ref{C:5.2}.
\begin{proposition}\label{X1}
For any  $\lambda>0$ and any 
smooth functions $f,g,h$, there holds
\be\label{h11}
\begin{aligned}
\left|\langle [\mathcal{H},W(f)](g_x),h \rangle\right|&\lesssim\frac{1}{\sqrt{\lam}}
\Vert f\Vert_{\dot H^{\frac{3}{2}}} \Vert g\Vert_{\dot H^{\frac{1}{2}}} \Vert h\Vert_{L^2}\\
&\quad+ \left(1+\Vert f\Vert_{\dot H^{\frac{3}{2}}}^{7}\right)\lA f\rA_{\dot{H}^{\tdm}_\lambda}
\lA\frac{g_x}{\L{f_x}}\rA_{L^2}\lA\frac{h}{\L{f_x}}\rA_{L^2}.
\end{aligned}
\ee
\end{proposition}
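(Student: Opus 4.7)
The proof splits at the threshold $|\alpha|=\lambda$: write $W(f)=W^{\mathrm{lg}}(f)+W^{\mathrm{sh}}(f)$ with
\begin{equation*}
W^{\mathrm{lg}}(f)(x)\defn\frac{1}{\pi}\int_{|\alpha|\ge\lambda}\frac{\mathcal{O}(\alpha,x)}{\alpha}\dalpha,\qquad W^{\mathrm{sh}}(f)(x)\defn\frac{1}{\pi}\int_{|\alpha|\le\lambda}\frac{\mathcal{O}(\alpha,x)}{\alpha}\dalpha,
\end{equation*}
so that $[\mathcal{H},W(f)]=[\mathcal{H},W^{\mathrm{lg}}(f)]+[\mathcal{H},W^{\mathrm{sh}}(f)]$. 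The long-range piece will produce the first summand of \e{h11}, and the short-range piece will produce the second, where the null-type structure must appear.

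\textbf{Long-range part.} I would bound $\lA W^{\mathrm{lg}}(f)\rA_{\dot H^1}$ directly and then invoke Lemma~\ref{L:Hilbert1}. Since $u\mapsto u^2/(1+u^2)$ has bounded derivative, $|\partial_x\mathcal{O}(\alpha,\cdot)|\les|\partial_x\Delta_\alpha f|+|\partial_x\Delta_{-\alpha}f|$ pointwise, so Minkowski's inequality together with the elementary Plancherel bound $\lA \partial_x\Delta_\alpha f\rA_{L^2}\les|\alpha|^{-\mez}\lA f\rA_{\dot H^{\tdm}}$ (which follows from $|1-e^{-i\alpha\xi}|^2/\alpha^2\le 2|\xi|/|\alpha|$) yields
\begin{equation*}
\lA \partial_xW^{\mathrm{lg}}(f)\rA_{L^2}\les\int_{|\alpha|\ge\lambda}\lA \partial_x\Delta_\alpha f\rA_{L^2}\frac{\dalpha}{|\alpha|}\les\lambda^{-\mez}\lA f\rA_{\dot H^{\tdm}}.
\end{equation*}
Applying Lemma~\ref{L:Hilbert1} and Cauchy--Schwarz against $h$ gives the first summand of \e{h11}.

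\textbf{Short-range part.} The cornerstone of this step is the null-structure identity
\begin{equation*}
\mathcal{O}(\alpha,x)=\frac{S_\alpha(f)(x)\bigl(\Delta_\alpha f(x)+\Delta_{-\alpha}f(x)\bigr)}{2\L{\Delta_\alpha f(x)}^2\L{\Delta_{-\alpha}f(x)}^2},
\end{equation*}
obtained by writing $u^2/(1+u^2)=1-1/\L{u}^2$ and using antisymmetrization in $\alpha$. Two weights $\L{\Delta_{\pm\alpha}f}^{-2}$ appear; since $\L{\Delta_{\pm\alpha}f(x)}\sim\L{f_x(x)}$ up to an $E_\alpha(f)$-type remainder, $\mathcal{O}$ effectively carries the weight $\L{f_x(x)}^{-2}$ which is precisely what must couple to $h(x)$ and $g_x(x-\beta)$ in the expanded commutator
\begin{equation*}
\L{[\mathcal{H},W^{\mathrm{sh}}(f)](g_x),h}=\frac{1}{\pi^2}\iiint_{|\alpha|\le\lambda}\frac{\mathcal{O}(\alpha,x)-\mathcal{O}(\alpha,x-\beta)}{\alpha\beta}g_x(x-\beta)h(x)\dalpha\,\diff\!\beta\,\dx.
\end{equation*}
I would telescope $\mathcal{O}(\alpha,x)-\mathcal{O}(\alpha,x-\beta)$ so that the two null-structure weights distribute between the $x$ and $x-\beta$ variables, then apply Lemma~\ref{L:2.7} to the $\dalpha/|\alpha|^{a_2}$ integrals with parameters tuned so that the exponent $\tfrac{1}{100}$ appearing in \e{defi:Hlam} emerges, producing the factor $\lA f\rA_{\dot H^{\tdm}_\lambda}$. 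Finally, Lemma~\ref{L:Hilbert2} (with $\gamma(z)=\L{z}^{-1}$) is used to transfer the $\L{f_x}^{-1}$ weights onto $g_x$ and $h$, yielding $\lA g_x/\L{f_x}\rA_{L^2}$ and $\lA h/\L{f_x}\rA_{L^2}$. The polynomial prefactor $1+\lA f\rA_{\dot H^{\tdm}}^{7}$ is the result of bookkeeping: each expansion of the denominators $\L{\Delta_{\pm\alpha}f}^{-2}$ around $\L{f_x}^{-2}$ introduces an extra $\Delta_{\pm\alpha}f$-type factor that is controlled via H\"older and the critical embeddings \e{an1:n15}--\e{an1:n120}, and these powers accumulate.

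\textbf{Main obstacle.} The crux is the short-range analysis, specifically the algebraic telescoping of $\mathcal{O}(\alpha,x)-\mathcal{O}(\alpha,x-\beta)$ in a way that keeps \emph{both} factors $\L{f_x(x)}^{-1}$ and $\L{f_x(x-\beta)}^{-1}$ intact so that one attaches to $h(x)$ and the other to $g_x(x-\beta)$. A cruder argument that discards one of these weights (for instance using only $|\mathcal{O}|\le 1$) would lead to an unweighted bound in $\lA g_x\rA_{L^2}\lA h\rA_{L^2}$, which is useless in the regime of unbounded slopes where the parabolic dissipation degenerates. Preserving the full null structure through the nonlocal commutator difference, while simultaneously producing the frequency-localized norm $\lA f\rA_{\dot H^{\tdm}_\lambda}$ from the $\alpha$-integral, is the technical heart of the proof.
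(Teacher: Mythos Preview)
Your overall strategy matches the paper's: the same long/short decomposition in~$\alpha$, the same use of Lemma~\ref{L:Hilbert1} for the long-range part, and the same null-structure factorization $\mathcal{O}(\alpha,\cdot)=A_\alpha(f)\,S_\alpha(f)$ for the short-range part. The long-range step is essentially identical to the paper's (which uses a smooth cutoff $\chi(\alpha/\lambda)$ rather than a sharp one, but this is cosmetic there).

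There is, however, a genuine gap in your short-range analysis. After telescoping, the leading contribution is of the form
\[
a(x,y)\,A_0(f)(x)\left|\int_{|\alpha|\le\lambda}\bigl(S_\alpha f(x)-S_\alpha f(y)\bigr)\frac{\dalpha}{\alpha}\right|,
\]
with $A_0=f_x/\L{f_x}^4$. You propose to handle all the $\alpha$-integrals via Lemma~\ref{L:2.7}, but that lemma requires $a_2>1$ and takes norms \emph{before} integrating in $\alpha$; here the weight is exactly $\dalpha/|\alpha|$, which is borderline, and Minkowski would throw away the oscillatory cancellation in the $\alpha$-integral. The paper instead computes this integral explicitly as $2\bigl(\mathcal{H}_\lambda f_x(x)-\mathcal{H}_\lambda f_x(y)\bigr)$, where $\mathcal{H}_\lambda$ is a truncated Hilbert-type operator with symbol $\hat\theta_\lambda$ satisfying $|\hat\theta_\lambda(\xi)|\lesssim\min\{1,\lambda|\xi|\}$. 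It is precisely this symbol bound that produces the $\min\{\lambda|\xi|,1\}$ factor defining $\lA f\rA_{\dot H^{3/2}_\lambda}$ for the leading term; Lemma~\ref{L:2.7} then handles only the remainder terms coming from $A_\alpha-A_0$ and from $A_\alpha(f)(x)-A_\alpha(f)(y)$.

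Second, your plan to invoke Lemma~\ref{L:Hilbert2} to ``transfer the $\L{f_x}^{-1}$ weights onto $g_x$ and $h$'' is not how the paper proceeds, and it is not clear how that lemma (which controls $\gamma(h)\mathcal{H}g$, not a commutator) would apply here. The paper instead writes $\L{f_x}[\mathcal{H},W_{1,\lambda}(f)](g_x)$ explicitly as a kernel acting on $\tilde g=g_x/\L{f_x}$ with weight $a(x,y)=\L{f_x(x)}\L{f_x(y)}$, then uses H\"older with exponents $(5/2,5/3)$ together with the Riesz potential bound $\|\D^{-1/4}(|\tilde g|^{5/4})\|_{L^{8/3}}\lesssim\|\tilde g\|_{L^2}$ to separate $\|\tilde g\|_{L^2}$ from a fifth-moment integral $\iint a(x,y)^5|W_{1,\lambda}(f)(x)-W_{1,\lambda}(f)(y)|^5|x-y|^{-2}\dx\dy$. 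This fifth-moment integral is what is then estimated by the $I_1,I_2,I_3$ decomposition, and it is where the power $7$ (actually up to $35/5$) arises from the bookkeeping you describe.
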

\begin{remark}
Assume that $\phi$ is as defined in~\e{n10} for some admissible weight 
$\kappa$. Then it follows from \e{est:kappa3} and the previous inequality that
\begin{align*}
\left|\langle [\mathcal{H},W(f)](g_x),h \rangle\right|&\lesssim \frac{1}{\sqrt{\lam}}
\Vert f\Vert_{\dot H^{\frac{3}{2}}} \Vert g\Vert_{\dot  H^{\frac{1}{2}}} \Vert h\Vert_{L^2}\\
&+ \kappa\Big(\frac{1}{\lam}\Big)^{-1}\left(1+\Vert f\Vert_{\dot H^{\frac{3}{2}}}^{7}\right)
\blA \D^{\tdm,\phi}f\brA_{L^2}\lA\frac{g_x}{\L{f_x}}\rA_{L^2}\lA\frac{h}{\L{f_x}}\rA_{L^2}.
\end{align*}
\end{remark}
\begin{proof}
One can write the function $\mathcal{O}(\alpha,x)$ (see~\e{Oss}) 
as follows:
\begin{equation}\label{X2}
\begin{aligned}
&\mathcal{O}\left(\alpha,\cdot\right) = 
A_\alpha(f)\left(\Delta_\alpha f-\Delta_{-\alpha} f\right) \quad\text{where}\\
&A_\alpha(f)= \frac{1}{2}
\frac{\Delta_\alpha f+\Delta_{-\alpha} f}{(1+\left(\Delta_\alpha f\right)^2)(1+\left(\Delta_{-\alpha} f\right)^2)}\cdot
\end{aligned}
\end{equation}
Then, by definition of $W(f)$ (see~\e{defi:V}), for any $\lambda>0$, one has 
$W(f)=W_{1,\lambda}(f)+W_{2,\lambda}(f)$ with
\begin{align*}
&W_{1,\lam}(f)= \frac{1}{\pi}\int_\xR\frac{\mathcal{O}\left(\alpha,.\right)}{\alpha}\chi\left(\frac{\alpha}{\lam}\right)\dalpha, \\
&W_{2,\lam}(f)=\frac{1}{\pi}\int_\xR\frac{\mathcal{O}\left(\alpha,.\right)}{\alpha}\(1-\chi\left(\frac{\alpha}{\lam}\right)\)\dalpha.
\end{align*}
Then, the Cauchy-Schwarz inequality implies that
\be\label{X3}
\begin{aligned}
\left|\langle [\mathcal{H},W(f)](g_x),h \rangle\right|
&\le \lVert \L{f_x}[\mathcal{H},W_{1,\lam}(f)](g_x)\rVert_{L^2} \lVert\frac{h}{\L{f_x}}\rVert_{L^2}\\
&\quad+\lVert[\mathcal{H},W_{2,\lam}(f)](g_x)\rVert_{L^2} \lVert h\rVert_{L^2}.
\end{aligned}
\ee
We claim that 
\begin{equation}\label{X4}
\lVert[\mathcal{H},W_{2,\lam}(f)](g_x)\rVert_{L^2}\lesssim\frac{1}{
\sqrt{\lam}}  \lVert f\rVert_{\dot H^{\frac{3}{2}}} \lVert g\rVert_{\dot H^{\frac{1}{2}}},
\end{equation}
and 
\begin{equation}\label{X4"}
\Vert \L{f_x}[\mathcal{H},W_{1,\lam}(f)](g_x)\Vert_{L^2}\lesssim  \left(1+\Vert f\Vert_{\dot H^{\frac{3}{2}}}^{7}\right)
\lA f\rA_{\dot{H}^{\tdm}_\lambda} \lA\frac{g_x}{ \L{f_x} }\rA_{L^2}.
\end{equation}
The wanted conclusion~\e{h11} 
will follow at once from~\e{X3}, \e{X4} 
and \e{X4"}.

\textit{Step 1: proof of the claim} \e{X4}. 
We use Lemma~\ref{L:Hilbert1} which gives an estimate for the 
commutator of the Hilbert transform and a function in $\dot{H}^1(\xR)$. 
It implies that
\begin{align}\label{X5}
\Vert[\mathcal{H},W_{2,\lam}(f)](g_x)\Vert_{L^2}\leq \Vert W_{2,\lam}(f)\Vert_{\dot H^1}\Vert g\Vert_{\dot H^{\frac{1}{2}}}.
\end{align}
By definition of $W_{2,\lam}(f)$, we have 
$$
\Vert W_{2,\lam}(f)\Vert_{\dot H^1}^2\lesssim 
\int \left(\int_\xR\frac{|\partial_x\mathcal{O}\left(\alpha,x\right)|}{|\alpha|}
\Big(1-\chi\Big(\frac{\alpha}{\lam}\Big)\Big)\dalpha\right)^2\dx.
$$
Now, directly from the definition of $\mathcal{O}$, we have
$$
\la \partial_x\mathcal{O} (x,\alpha)\ra\les \la \Delta_\alpha f_x\ra+\la \Delta_{-\alpha}f_x\ra. 
$$
Observe that, 
by symmetry\footnote{Let us 
mention that we will make extensive of this symmetry to handle 
the contributions of terms involving $\Delta_{-\alpha}$, without explicitly recalling this argument.} 
it is sufficient to consider the contribution of $\Delta_\alpha f_x$. Consequently, 
using the Cauchy-Schwarz inequality together with the bound~\e{an1:n15}, we get
\begin{align*}
\Vert W_{2,\lam}(f)\Vert_{\dot H^1}^2&\lesssim  
\int  \Bigg(\underset{|\alpha|\geq \lam}{\int}\frac{|f_x(x)-f_x(x-\alpha)|}{|\alpha|^2}\dalpha\Bigg)^2\dx
\\&\lesssim \frac{1}{\lambda}
\iint\frac{|f_x(x)-f_x(x-\alpha)|^2}{|\alpha|^2}\dalpha\dx
\\&\lesssim \frac{1}{\lambda}\Vert f\Vert_{\dot H^{\frac{3}{2}}}^2.
\end{align*}
So the desired result \eqref{X4} follows from \eqref{X5} and the previous inequality.

\textit{Step 2: proof of the claim} \e{X4"}. This is the most delicate task. 

Directly from the definition of the Hilbert transform, one has
\begin{align*}
\Vert \L{f_x}[\mathcal{H},W_{1,\lam}(f)](g_x)\Vert_{L^2}^2
=\frac{1}{\pi^2}\int \left(\int a(x,y) \frac{W_{1,\lam}(f)(x)-W_{1,\lam}(f)(y)}{x-y} \tilde{g}(y)\dy\right)^2\dx
\end{align*}
with
$$
\tilde{g}(y)=\frac{g_x(y)}{ \L{f_x(y)} } \quad\text{and}\quad a(x,y)=\L{f_x(x)}\L{f_x(y)}.
$$
By applying 
Holder's inequality 
together with the representation 
formula~\e{b1}, we find
\begin{align}\label{Z1-W}
&\Vert \L{f_x}[\mathcal{H},W_{1,\lam}(f)](g)\Vert_{L^2}^2\\\nonumber
&\les \int \left(\int a(x,y)^5 
\la W_{1,\lam}(f)(x)-W_{1,\lam}(f)(y)\ra^5
\frac{\dy}{|x-y|^{2}}\right)^{\frac{2}{5}} \left(|D|^{-\frac{1}{4}} (|\tilde{g}|^{\frac{5}{4}})(x)\right)^{\frac{8}{5}}\dx\\\nonumber
&\lesssim \left(\iint a(x,y)^5
|W_{1,\lam}(f)(x)-W_{1,\lam}(f)(y)|^5 \frac{\dy\dx}{|x-y|^{2}}\right)^{\frac{2}{5}}
\left( \int \left(|D|^{-\frac{1}{4}} (|\tilde{g}|^{\frac{5}{4}})(x)\right)^{\frac{8}{3}}\dx\right)^{\frac{3}{5}}\\\nonumber
&\lesssim   \left(\iint a(x,y)^5 |W_{1,\lam}(f)(x)-W_{1,\lam}(f)(y)|^5 \frac{\dy\dx}{|x-y|^{2}}\right)^{\frac{2}{5}}
\Vert\tilde{g}\Vert_{L^2}^{2},
\end{align}
where we have used~\e{Riesz} to write 
\begin{equation*}
\blA\D^{-\frac{1}{4}} h\brA_{L^{\frac{8}{3}}}\lesssim 	\Vert h\Vert_{L^{\frac{8}{5}}}.
\end{equation*}
So, it is enough to show that 
\begin{equation}\label{X12}
\iint a(x,y)^5 |W_{1,\lam}(f)(x)-W_{1,\lam}(f)(y)|^5 \frac{\dy\dx}{|x-y|^{2}}
\lesssim  \left(1+\Vert f\Vert_{\dot H^{\frac{3}{2}}}^{35}\right)
\lA f\rA_{\dot{H}^{\tdm}_\lambda}^5.
\end{equation}
One has
\begin{align*}
&a(x,y)|W_{1,\lam}(f)(x)-W_{1,\lam}(f)(y)|\\
&\qquad\leq \frac{a(x,y)}{\pi}\left|\int A_\alpha(f)(x)
\left(S_\alpha f(x)-S_\alpha f(y)\right) \chi\left(\frac{\alpha}{\lam}\right) \frac{\dalpha}{\alpha}\right|\\
&\qquad\quad+ \frac{a(x,y)}{\pi}\left|\int\left(A_\alpha(f)(x)-A_\alpha(f)(y)\right)
 S_\alpha f(y) \chi\left(\frac{\alpha}{\lam}\right) \frac{\dalpha}{\alpha}\right|.
\end{align*}
We further decompose $A_\alpha(f)$ according to 
$A_0(f)+\left(A_\alpha(f)-A_0(f)\right)$, with
$$
A_0=A_\alpha\arrowvert_{\alpha=0}=\frac{f_x}{(1+f_x^2)^2}\cdot
$$
Then
$$
a(x,y)|W_{1,\lam}(f)(x)-W_{1,\lam}(f)(y)|\leq\frac{1}{\pi}\left( I_1(x,y)+I_2(x,y)+I_3(x,y)\right),
$$
where 
\begin{align*}
&I_1(x,y)= a(x,y)A_{0}(f)(x)\left|\int \left(S_\alpha f(x)-S_\alpha f(y)\right)
\chi\left(\frac{\alpha}{\lam}\right)\frac{\dalpha}{\alpha}\right|,\\
&
I_2(x,y)= a(x,y)\int |A_{\alpha}(f)(x)-A_{0}(f)(x)| \left| S_\alpha f(x)-S_\alpha f(y)\right|
\chi\left(\frac{\alpha}{\lam}\right)\frac{\dalpha}{|\alpha|}
,\\&I_3(x,y)= a(x,y)\int\left|A_\alpha(f)(x)-A_\alpha(f)(y)\right|\left| S_\alpha f(y)\right|
\chi\left(\frac{\alpha}{\lam}\right)\frac{\dalpha}{|\alpha|}\cdot
\end{align*}

\textit{Step 2.1: estimate of the contribution of $I_1$}. 
We begin by estimating
$$
\iint_{\xR^2} I_1(x,y)^5\frac{\dx\dy}{|x-y|^2}\cdot
$$

\begin{lemma}
Set
$$
\theta_\lam(\alpha)=\sign(\alpha)\int_{|\alpha|}^\infty\chi\Big(\frac{s}{\lam}\Big)\frac{\ds}{s^2}
$$
and define
\begin{equation*}
\mathcal{H}_\lam h(x)=h\star \theta_\lambda(x)=\int_\xR h(x-\alpha) \theta_\lam(\alpha) \dalpha.
\end{equation*}
Then
$$
\int_\xR S_\alpha f(x)\chi\left(\frac{\alpha}{\lam}\right)\frac{\dalpha}{\alpha}=2\mathcal{H}_\lambda f_x(x).
$$
\end{lemma}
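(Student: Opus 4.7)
The plan is to prove the identity by integration by parts, using that $\theta_\lam$ is essentially an antiderivative of the truncated kernel $\chi(\cdot/\lam)/(\cdot)^2$. Explicitly, $\theta_\lam$ is odd, vanishes outside $[-2\lam,2\lam]$, and a direct differentiation gives $\theta_\lam'(\alpha) = -\chi(\alpha/\lam)/\alpha^2$ for $\alpha \ne 0$.

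First I would rewrite the right-hand side using the odd symmetry of $\theta_\lam$: a change of variable $\alpha \mapsto -\alpha$ together with $\theta_\lam(-\alpha) = -\theta_\lam(\alpha)$ yields
\[
2\mathcal{H}_\lam f_x(x) = \int_\xR \bigl(f_x(x-\alpha) - f_x(x+\alpha)\bigr)\, \theta_\lam(\alpha)\, \dalpha.
\]
The integrand is absolutely integrable: near $\alpha=0$ the bracket is $O(|\alpha|)$ while $\theta_\lam(\alpha) = O(1/|\alpha|)$, and for $|\alpha| \ge 2\lam$ the factor $\theta_\lam(\alpha)$ vanishes.

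Next, writing $f_x(x-\alpha) - f_x(x+\alpha) = -\partial_\alpha G(\alpha)$ with $G(\alpha) \defn f(x-\alpha) + f(x+\alpha) - 2f(x)$, I would integrate by parts over $\{|\alpha|\ge\eps\}$ and let $\eps \to 0$. The boundary at infinity vanishes by the compact support of $\chi$. The two boundary contributions at $\pm\eps$ combine, thanks to $G$ being even and $\theta_\lam$ being odd, to $-2G(\eps)\theta_\lam(\eps)$, which tends to $0$ since $G(\eps)=O(\eps^2)$ and $\theta_\lam(\eps)=O(1/\eps)$. What remains is
\[
2\mathcal{H}_\lam f_x(x) = \int_\xR G(\alpha)\theta_\lam'(\alpha)\, \dalpha = -\int_\xR \frac{G(\alpha)}{\alpha^2} \chi(\alpha/\lam)\, \dalpha,
\]
and observing $-G(\alpha)/\alpha = S_\alpha f(x)$ identifies the right-hand side with $\int_\xR S_\alpha f(x)\,\chi(\alpha/\lam)/\alpha\, \dalpha$, closing the identity.

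There is no real obstacle; the only subtlety is the bookkeeping at the origin, where the parities of $G$ and $\theta_\lam$ must be tracked carefully so that the otherwise divergent boundary terms cancel. Once this cancellation is identified, the remaining steps follow at once from the Taylor expansion $G(\alpha) = f''(x)\alpha^2 + O(\alpha^4)$ and the compact support of $\chi$.
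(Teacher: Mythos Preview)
Your proof is correct and follows essentially the same route as the paper: both exploit $\theta_\lam'(\alpha)=-\chi(\alpha/\lam)/\alpha^2$ and integrate by parts, with the paper first reducing to $(0,\infty)$ by parity while you work symmetrically on $\{|\alpha|\ge\eps\}$ and track the boundary at the origin explicitly. One harmless slip: the boundary contributions actually combine to $+2G(\eps)\theta_\lam(\eps)$ rather than $-2G(\eps)\theta_\lam(\eps)$, but since this term vanishes as $\eps\to 0$ the argument is unaffected.
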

\begin{proof}
Recall that
$$
S_\alpha(f)(x)=\Delta_\alpha f(x)-\Delta_{-\alpha} f(x)=\frac{2f(x)-f(x-\alpha)-f(x+\alpha)}{\alpha}.
$$
By a parity argument, we have
$$
\int_\xR S_\alpha f(x)\chi\left(\frac{\alpha}{\lam}\right)\frac{\dalpha}{\alpha}
=2\int_0^{+\infty}\big(2f(x)-f(x-\alpha)-f(x+\alpha)\big)\chi\left(\frac{\alpha}{\lam}\right)\frac{\dalpha}{\alpha^2}\cdot
$$
Now, on $\xR_+$, by definition of $\theta_\lambda(\alpha)$, we have $\partial_\alpha\theta_\lambda=-\chi(\alpha/\lam)/\alpha^2$. 
Hence, integrating by parts, we obtain
\begin{align*}
\int_\xR S_\alpha f(x)\chi\left(\frac{\alpha}{\lam}\right)\frac{\dalpha}{\alpha}
&=2\int_0^\infty \partial_\alpha \big(2f(x)-f(x-\alpha)-f(x+\alpha)\big)\theta_\lambda(\alpha)\dalpha\\
&=2\int_0^\infty  \big(f_x(x-\alpha)-f_x(x+\alpha)\big)\theta_\lambda(\alpha)\dalpha.
\end{align*}
Since $\theta_\lambda(-\alpha)=-\theta_\lambda(\alpha)$, this gives the wanted formula.
\end{proof}

On the other hand, remembering that
$$
a(x,y)=\L{f_x(x)}\L{f_x(y)} \quad\text{and}\quad A_0(x)=\frac{f_x(x)}{\L{f_x(x)}^4},
$$
we get
\begin{align*}
a(x,y)A_0(x)\le \frac{\L{f_x(y)}}{1+f_x(x)^2}\les \la f_x(x)-f_x(y)\ra +1.
\end{align*}
Consequently, we deduce 
from the definition of $I_1$ and the previous lemma that
$$
I_1(x,y)\lesssim|f_x(x)-f_x(y)|\left|\mathcal{H}_\lam f_x(x)
-\mathcal{H}_\lam f_x(y)\right|+\left|\mathcal{H}_\lam f_x(x)-\mathcal{H}_\lam f_x(y)\right|.
$$
So, using H\"older's inequality and the characterization of the 
fractional Sobolev spaces in terms of 
Gagliardo semi-norms (see~\e{Gagliardo}), we have
$$
\iint I_1(x,y)^5\frac{\dx\dy}{|x-y|^2}
\lesssim \Vert\mathcal{H}_\lam f_x\Vert_{\dot W^{\frac{1}{10},10}}^5
\Vert f_x\Vert_{\dot W^{\frac{1}{10},10}}^5+ \Vert\mathcal{H}_\lam f_x\Vert_{\dot W^{\frac{1}{5},5}}^5.
$$
Then, using Sobolev's embeddings, we conclude that
\be\label{r867}
\iint I_1(x,y)^5\frac{\dx\dy}{|x-y|^2}\lesssim ( \Vert f\Vert_{\dot H^{\frac{3}{2}}}^{5} +1)
\Vert\mathcal{H}_\lam f_x\Vert_{\dot H^{\frac{1}{2}}}^{5}.
\ee

It remains to estimate the $H^{\frac{1}{2}}$-norm of $\mathcal{H}_\lam f_x$. 
To do so, we will use the following inequality.
\begin{lemma}
There exists a constant $C>0$ such that, for any $\lambda$,
\begin{equation}\label{C1}
|\hat \theta_\lam(\xi)|\lesssim \min\{1,|\xi|\lam\}.
\end{equation}
\end{lemma}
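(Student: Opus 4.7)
The plan is to use scaling to reduce to the case $\lambda=1$. A change of variable $s=\lambda t$ in the defining integral yields
\[
\theta_\lambda(\alpha)=\frac{1}{\lambda}\theta_1\!\left(\frac{\alpha}{\lambda}\right),
\]
so by the scaling of the Fourier transform $\hat\theta_\lambda(\xi)=\hat\theta_1(\lambda\xi)$. It is thus enough to prove $|\hat\theta_1(\eta)|\lesssim\min\{1,|\eta|\}$.

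Next I would record the structural features of $\theta_1$: it is odd, supported in $[-2,2]$ (since $\supp\chi\subset[-2,2]$), and for $0<\beta\le 1$ one has explicitly
\[
\theta_1(\beta)=\frac{1}{\beta}-1+\int_{1}^{2}\chi(s)s^{-2}\,ds,
\]
so that $\beta\mapsto\theta_1(\beta)-\sign(\beta)/|\beta|$ is bounded on $[-2,2]\setminus\{0\}$. Oddness lets me write
\[
\hat\theta_1(\eta)=-2i\int_{0}^{\infty}\theta_1(\beta)\sin(\beta\eta)\,d\beta,
\]
and the integrand is absolutely integrable, because $\theta_1(\beta)\sin(\beta\eta)=O(|\eta|)$ as $\beta\to 0^{+}$ and $\theta_1$ has compact support.

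For $|\eta|\le 1$, the pointwise bound $|\sin(\beta\eta)|\le\beta|\eta|$ together with $|\beta\,\theta_1(\beta)|\lesssim 1$ on $(0,2]$ immediately gives $|\hat\theta_1(\eta)|\lesssim|\eta|$. For $|\eta|\ge 1$ I would split at $\beta=1/|\eta|$: on $(0,1/|\eta|)$ the same pointwise bound yields $O(1)$, and on $(1/|\eta|,2)$ one integrates by parts using $\theta_1'(\beta)=-\chi(\beta)/\beta^{2}$ for $\beta>0$, so that
\[
\int_{1/|\eta|}^{2}\theta_1(\beta)\sin(\beta\eta)\,d\beta
=\Bigl[-\theta_1(\beta)\frac{\cos(\beta\eta)}{\eta}\Bigr]_{1/|\eta|}^{2}
-\frac{1}{\eta}\int_{1/|\eta|}^{2}\frac{\chi(\beta)}{\beta^{2}}\cos(\beta\eta)\,d\beta.
\]
The boundary term is $O(1)$ since $\theta_1(2)=0$ and $|\theta_1(1/|\eta|)|\lesssim|\eta|$, and the remaining integral is bounded by $|\eta|^{-1}\int_{1/|\eta|}^{2}\beta^{-2}\,d\beta\le 1$.

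\textbf{Main obstacle.} The delicate point is the non-integrability of $\theta_\lambda$ at the origin: the Fourier integral only makes sense because $\theta_\lambda$ is odd, so that the $O(1/\beta)$ singularity is cancelled against $\sin(\beta\eta)=O(\beta|\eta|)$. Equally, on the intermediate interval $(1/|\eta|,2)$ the crude $L^{1}$ bound diverges logarithmically, so the oscillation of $\cos(\beta\eta)$ must be extracted via integration by parts; this is the only step that uses the smoothness of the truncation $\chi$. Once these two cancellations are in place, the rest of the argument is essentially bookkeeping.
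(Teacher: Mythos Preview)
Your proof is correct and follows essentially the same strategy as the paper's: exploit oddness to reduce to the sine transform, bound directly via $|\sin(\beta\eta)|\le\beta|\eta|$ for small $\beta$, and integrate by parts using $\theta_1'(\beta)=-\chi(\beta)/\beta^{2}$ for large $\beta$. Your preliminary scaling reduction $\hat\theta_\lambda(\xi)=\hat\theta_1(\lambda\xi)$ is a mild streamlining (the paper keeps $\lambda$ throughout and splits with the smooth cutoff $\chi(\xi\alpha)$ rather than sharply at $1/|\eta|$), but the substance is the same.
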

\begin{proof}
One has
\begin{align*}
\bla\hat \theta_\lam(\xi)\bra
&=|\int \theta_\lam(\alpha)\sin(\alpha\xi) \dalpha|\\
&\leq\left| \int \chi(\xi \alpha) \theta_\lam(\alpha) \sin(\alpha\xi) \dalpha\right|
+\left| \int (1- \chi(\xi \alpha) )\theta_\lam(\alpha)\sin(\alpha\xi) \dalpha\right|.
\end{align*} 
Since $|\sin(\alpha\xi)|\leq |\alpha \xi|$ and $\xi \sin(\alpha\xi)=-\partial_\alpha\cos(\alpha \xi)$, 
\begin{align*}
|\hat \theta_\lam(\xi)|&\leq  \int_{|\xi \alpha|\leq 2} \la\theta_\lam(\alpha)\ra 
\la \alpha\xi\ra\dalpha+\frac{1}{|\xi|}\left| \int \partial_\alpha
\left[(1- \chi(\xi \alpha) )\theta_\lam(\alpha)\right]\cos(\alpha\xi) \dalpha\right|.
\end{align*} 
Now, using 
\begin{equation*}
|\theta_\lam(\xi)|\leq \frac{\mathbf{1}_{|\alpha|\leq 2\lam}}{|\alpha|},~~	|\theta_\lam'(\xi)|
\leq \frac{\mathbf{1}_{|\alpha|\leq 2\lam}}{|\alpha|^2},
\end{equation*}
one obtains,
\begin{align*}
|\hat \theta_\lam(\xi)|&\lesssim  \int_{|\xi \alpha|\leq 2}
\mathbf{1}_{|\alpha|\leq 2\lam}|\xi|\dalpha+\frac{1}{|\xi|}
\int_{|\xi\alpha|\geq 1} \frac{\mathbf{1}_{|\alpha|\leq 2\lam}}{|\alpha|^2}\dalpha
+ \int_{1\leq |\xi \alpha|\leq 2} \frac{\mathbf{1}_{|\alpha|\leq 2\lam}}{|\alpha|} \dalpha\\
&\lesssim |\xi| \min\left\{\frac{1}{|\xi|},\lam\right\},
\end{align*} 
equivalent to~\eqref{C1}. 
\end{proof}

Since 
$\widehat{\mathcal{H}_\lam f_x}(\xi)=i\xi\hat{f}(\xi)\hat{\theta}_\lambda(\xi)$, 
the above lemma implies that
\begin{equation*}
\Vert\mathcal{H}_\lam f_x\Vert_{\dot H^{\frac{1}{2}}}\leq 
\left(\int |\xi|^{3}(\min\{1,|\xi|\lam\})^2
\bla\hat f(\xi)\bra^2\dxi\right)^{\frac{1}{2}}
\le \lA f\rA_{\dot{H}^{\tdm}_\lambda},
\end{equation*}
where recall that $\lA f\rA_{\dot{H}^{\tdm}_\lambda}$ is defined by \e{defi:Hlam}. 
Therefore, we obtain from~\e{r867} that
\begin{equation}\label{X6}
\iint I_1(x,y)^5\frac{\dx\dy}{|x-y|^2}
\lesssim \big(1+\Vert f\Vert_{\dot H^{\frac{3}{2}}}^{5}\big)
\lA f\rA_{\dot{H}^{\tdm}_\lambda}^{5}.
\end{equation}
This concludes the analysis of this term.

\textit{Step 2.2: estimate of the contribution of $I_2$}. 
We now estimate
$$
\iint I_2(x,y)^5\frac{\dx\dy}{|x-y|^2}\cdot
$$
Recall that
$$
I_2(x,y)= a(x,y)\int \la A_{\alpha}(f)(x)-A_{0}(f)(x)\ra \la S_\alpha f(x)-S_\alpha f(y)\ra
\chi\left(\frac{\alpha}{\lam}\right)\frac{\dalpha}{|\alpha|},
$$
with $a(x,y)=\L{f_x(x)}\L{f_x(y)}$ and
$$
A_\alpha(f)= \frac{1}{2}
\frac{\Delta_\alpha f+\Delta_{-\alpha} f}{(1+\left(\Delta_\alpha f\right)^2)(1+\left(\Delta_{-\alpha} f\right)^2)},
\qquad A_0=\frac{f_x}{(1+f_x^2)^2}\cdot
$$

We begin by estimating the factor $A_{\alpha}(f)(x)-A_{0}(f)(x)$ by means of the following
\begin{lemma}\label{L:5.5}
For any $(x,x_1,x_2)$, the quantity
$$
M=	\left|\frac{x_1+x_2}{2(1+x_1^2)(1+x_2^2)}-\frac{x}{(1+x^2)^2}\right|
$$
satisfies
\begin{equation}\label{X7}
M\lesssim\frac{1}{\L{x}^{2}}\Big(|x_1-x|^2+|x_2-x|^2+|x_1-x|+|x_2-x|\Big).
\end{equation}
\end{lemma}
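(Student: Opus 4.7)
The plan is to reduce \eqref{X7} to a purely algebraic identity and then to control the resulting expression via a dichotomy on the size of the increments $h_i\defn x_i-x$. Set
$$
F(y_1,y_2)\defn \frac{y_1+y_2}{2(1+y_1^2)(1+y_2^2)},\qquad G(y)\defn \frac{y}{(1+y^2)^2}=F(y,y),
$$
so that $M=|F(x_1,x_2)-G(x)|$. The elementary bounds $|y/(1+y^2)|\le 1/2$ and $|y/(1+y^2)^2|\le 1/2$ give the crude estimate $M\le 1$, which will be used as-is in the large-increment regime.

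First I would put everything over the common denominator $2(1+x_1^2)(1+x_2^2)(1+x^2)^2$ and substitute $x_i=x+h_i$, using $(x_1+x_2)=2x+h_1+h_2$ and $(1+x_i^2)=(1+x^2)+h_i(x_i+x)$, to obtain the algebraic identity
\begin{align*}
&(x_1+x_2)(1+x^2)^2 - 2x(1+x_1^2)(1+x_2^2) \\
&\qquad = (1+x^2)\bigl[(1-3x^2)(h_1+h_2) - 2x(h_1^2+h_2^2)\bigr] - 2x\,h_1 h_2 (x_1+x)(x_2+x).
\end{align*}
This is a short computation, which I would verify once by direct expansion. A useful sanity check is that the linear-in-$(h_1,h_2)$ coefficient on the right matches $(1+x^2)^2 G'(x) = (1+x^2)^2 (1-3x^2)/(1+x^2)^3$, reflecting the fact that $\partial_{y_1}F(x,x)+\partial_{y_2}F(x,x)=G'(x)$.

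With this identity in hand, I would split according to the size of $\max(|h_1|,|h_2|)$ relative to $\L{x}$. In the regime $|h_1|,|h_2|\le \L{x}/2$, we have $\L{x_i}\sim \L{x}$, so the denominator is of order $\L{x}^8$. Estimating the identity above term by term using $|1-3x^2|\lesssim \L{x}^2$, $|x|\lesssim \L{x}$, $|x_i+x|\lesssim \L{x}$, and $|h_1 h_2|\le (h_1^2+h_2^2)/2$, I would obtain
$$
\bigl|(x_1+x_2)(1+x^2)^2 - 2x(1+x_1^2)(1+x_2^2)\bigr|\lesssim \L{x}^4(|h_1|+|h_2|) + \L{x}^3(h_1^2+h_2^2),
$$
which divided by $\L{x}^8$ gives $M\lesssim (|h_1|+|h_2|)\L{x}^{-4}+(h_1^2+h_2^2)\L{x}^{-5}$, much stronger than required. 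In the complementary regime $\max(|h_1|,|h_2|) > \L{x}/2$, we have $h_1^2+h_2^2\gtrsim \L{x}^2$, so $\L{x}^{-2}(h_1^2+h_2^2)\gtrsim 1$ and the crude bound $M\le 1$ directly yields \eqref{X7}. The main (and essentially only) obstacle is the bookkeeping in the algebraic expansion; once that identity is in place, the remaining estimates are elementary and no further cancellation beyond the trivial $F(x,x)=G(x)$ is invoked.
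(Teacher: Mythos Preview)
Your proof is correct. The algebraic identity checks out (the linear coefficient indeed matches $(1+x^2)^2 G'(x)$, as you note), and both regimes of the dichotomy are handled cleanly; in particular, when $|h_i|\le \L{x}/2$ one does have $\L{x_i}\sim\L{x}$, so the denominator is $\sim\L{x}^8$ as claimed.

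Your route differs from the paper's. The paper keeps the full denominator $M_0=2(1+x_1^2)(1+x_2^2)(1+x^2)^2$ throughout and never replaces it by $\L{x}^8$; instead it expands the numerator $(x_1+x_2)(1+x^2)^2-2x(1+x_1^2)(1+x_2^2)$ by degree in $x$ into three pieces and bounds each using the factors $(1+x_1^2)(1+x_2^2)$ in $M_0$ to absorb any polynomial growth in $x_1,x_2$. The highest-degree piece $(x_1+x_2)x^4-2xx_1^2x_2^2$ is handled by an explicit factorization in $h_1,h_2$. This gives a single uniform estimate, with no case split. Your approach trades that uniformity for a dichotomy: in the small-increment regime the denominator simplifies to $\L{x}^8$ and the numerator bound is immediate; in the large-increment regime the crude bound $M\le 1$ together with $h_1^2+h_2^2\gtrsim\L{x}^2$ suffices. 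Your argument is arguably more transparent, and in the small-increment case yields the sharper decay $M\lesssim \L{x}^{-4}(|h_1|+|h_2|)+\L{x}^{-5}(h_1^2+h_2^2)$, which is more than needed here.
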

\begin{proof}
Set $M_0=2(1+x_1^2)(1+x_2^2)(1+x^2)^2$. Then
\be\label{M0}
\begin{aligned}
M&=\la \frac{1}{M_0}\Big((x_1+x_2)(1+x^2)^2-2x(1+x_1^2)(1+x_2^2)\Big)\ra\\
&\le \frac{\la x_1+x_2-2x\ra}{M_0}+\frac{\la (x_1+x_2)x^2-x(x_1^2+x_2^2)\ra}{M_0}
+\frac{\la (x_1+x_2)x^4-2xx_1^2x_2^2\ra}{M_0}\cdot
\end{aligned}
\ee
The first two factors are clearly bounded by $(1+x^2)^{-1}\left(|x_1-x|+|x_2-x|\right)$. 
To bound the last one, set $h_1=x-x_1$, $h_2=x-x_2$ and write
$$
(x_1+x_2)x^4-2xx_1^2x_2^2=x_1x\Big(
x_1 h_2^2+2x_1x_2 h_2+h_1(x_2^2+h_2^2+2x_2h_2)\Big).
$$
This implies that the last term in the right-hand side of~\e{M0} is bounded by the right-hand side of~\eqref{X7}. 
This completes the proof.
\end{proof}

Recall that  $E_\alpha(f)=\Delta_{\alpha}f-f_x$. It follows from the previous lemma that
\begin{equation*}
\la A_{\alpha}(f)-A_{0}(f)\ra
\le \frac{C}{\L{f_x}^2}\Big( 
\la E_\alpha(f)\ra+\la E_\alpha(f)\ra^2+
\la E_{-\alpha}(f)\ra+\la E_{-\alpha}(f)\ra^2\Big).
\end{equation*}
As above, by using the change of variables $\alpha\to -\alpha$ and a parity argument, 
it is sufficient to handle the contributions of $E_{\alpha}f$. We have
\begin{align*}
I_2(x,y)&\lesssim \L{f_x(y)}\L{f_x(x)}^{-1}\int \left(\la E_{\alpha}f(x)\ra 
+ \la E_{\alpha}f(x)\ra^2\right)\left| S_\alpha f(x)-S_\alpha f(y)\right|
\chi\left(\frac{\alpha}{\lam}\right)\frac{\dalpha}{|\alpha|}\\
&\lesssim 
\left( 1+|f_x(x)-f_x(y)|\right)\underset{|\alpha|\leq 2\lam}{\int} \left(\la E_{\alpha}f(x)\ra 
+ \la E_{\alpha}f(x)\ra^2\right)\left|S_\alpha(x)-S_\alpha(y)\right|\frac{\dalpha}{|\alpha|},
\end{align*}
where we have used the fact that $
\L{f_x(y)}\L{f_x(x)}^{-1}\lesssim 1+|f_x(x)-f_x(y)|.$

By H\"older and Minkowski's inequalities, the term $\iint I_2(x,y)^5\frac{\dx\dy}{|x-y|^2}$ 
is bounded by
\begin{align*}
&\Bigg[\underset{|\alpha|\leq 2\lam}{\int}\left(\Vert E_\alpha(f)\Vert_{L^{10}}
+\Vert E_\alpha(f)\Vert_{L^{20}}^2\right)\Vert S_\alpha(f)\Vert_{\dot{F}^{\frac{1}{5}}_{10,5}}
\frac{\dalpha}{|\alpha|} \Bigg]^{5}\\
&+
\Vert f_x\Vert_{\dot W^{\frac{1}{10},10}}^5\Bigg[\underset{|\alpha|\leq 2\lam}{\int}
\left(\Vert E_\alpha(f)\Vert_{L^{20}}+\Vert E_\alpha(f)\Vert_{L^{40}}^2\right)
\Vert S_\alpha(f)\Vert_{\dot{F}^{\frac{1}{10}}_{20,10}}\frac{\dalpha}{|\alpha|} \Bigg]^{5}.
\end{align*}
Hence, the Sobolev embeddings~\e{FB},~\e{Sobolev} and~\e{Gagliardo} imply that 
\begin{align*}
&\Vert h\Vert_{L^{10}}\lesssim \Vert h\Vert_{\dot H^{\frac{2}{5}}},
\quad \Vert h\Vert_{L^{20}}\lesssim \Vert h\Vert_{\dot H^{\frac{9}{20}}},
\quad \Vert h\Vert_{L^{40}}\lesssim \Vert h\Vert_{\dot H^{\frac{19}{40}}},\\
&\Vert f_x\Vert_{\dot W^{\frac{1}{10},10}}
\lesssim\Vert f\Vert_{\dot H^{\frac{3}{2}}},\Vert h\Vert_{\dot{F}^{\frac{1}{10}}_{20,10}}
\leq \Vert h\Vert_{\dot H^{\frac{11}{20}}},\quad
\Vert h\Vert_{\dot{F}^{\frac{1}{5}}_{10,5}}\leq \Vert h\Vert_{\dot H^{\frac{3}{5}}}.
\end{align*}
Consequently, using again H\"older's inequality,
\begin{align*}
&\iint I_2(x,y)^5\frac{\dx\dy}{|x-y|^2}\\
&\lesssim 
\Bigg[\underset{|\alpha|\leq 2\lam}{\int}
\left(\Vert E_\alpha(f)\Vert_{\dot H^{\frac{2}{5}}}^2+\Vert E_\alpha(f)\Vert_{\dot H^{\frac{9}{20}}}^4\right)
\frac{\dalpha}{|\alpha|^{\frac{6}{5}}} \Bigg]^{\frac{5}{2}}
\Bigg[\underset{|\alpha|\leq 2\lam}{\int}\Vert S_\alpha(f)\Vert_{\dot H^{\frac{3}{5}}}^2
\frac{\dalpha}{|\alpha|^{\frac{4}{5}}} \Bigg]^{\frac{5}{2}}\\&+
\Vert f\Vert_{\dot H^{\frac{3}{2}}}^5\Bigg[\underset{|\alpha|\leq 2\lam}{\int}
\left(\Vert E_\alpha(f)\Vert_{\dot H^{\frac{9}{20}}}^2
+\Vert E_\alpha(f)\Vert_{\dot H^{\frac{19}{40}}}^4\right)
\frac{\dalpha}{|\alpha|^{\frac{11}{10}}}\Bigg]^{\frac{5}{2}}
\Bigg[\underset{|\alpha|\leq 2\lam}{\int}\Vert S_\alpha(f)\Vert_{\dot H^{\frac{11}{20}}}^2
\frac{\dalpha}{|\alpha|^{\frac{9}{10}}}\Bigg]^{\frac{5}{2}}.
\end{align*}

So, it follows from Lemma~\ref{L:2.7} that
\begin{equation}\label{X10}
\iint I_2(x,y)^5\frac{\dx\dy}{|x-y|^2}
\lesssim \left(1+\Vert f\Vert_{\dot H^{\frac{3}{2}}}^{10}\right) \lA f\rA_{\dot{H}^{\tdm}_\lambda}^{10}.
\end{equation}

\textit{Step 2.3: Estimate of the contribution of $I_3$}. 
We now pass to the estimate of the last term
$$
\iint_{\xR^2} I_3(x,y)^5\frac{\dx\dy}{|x-y|^2}.
$$
Here we use the following variant of the inequality given by Lemma~\ref{L:5.5}:
\begin{equation*}
\left|\frac{x_1+x_2}{(1+x_1^2)(1+x_2^2)}-
\frac{y_1+y_2}{(1+y_1^2)(1+y_2^2)}\right|
\lesssim
\frac{\sum_{j=1}^{2}|x_j-y_j|+|x_j-y_j|^3}{(1+y_1^2)(1+y_2^2)}.
\end{equation*}
as can be verified by using arguments parallel to those 
used to prove Lemma~\ref{L:5.5}. 

It follows that 
\begin{align*}
I_3(x,y)&\lesssim 
\underset{|\alpha|\leq 2\lam}{\int}\frac{a(x,y)}{\L{\Delta_{\alpha} f(y)}^2}N_\alpha(x,y)
\left|\Delta_\alpha f(y)-\Delta_{-\alpha} f(y)\right|\frac{\dalpha}{|\alpha|},
\end{align*}
where 
\begin{align}\label{X9}
N_\alpha(x,y)= \left|\Delta_\alpha f(x)-\Delta_\alpha f(y)\right|+\left|\Delta_\alpha f(x)-\Delta_\alpha f(y)\right|^3.
\end{align}
Thanks to 
$$\frac{a(x,y)}{\L{\Delta_{\alpha} f(y)}^2}\lesssim (1+\la f_x(x)-f_x(y)\ra )(1+(f_x(y)-\Delta_{\alpha} f(y))^2)$$
it follows that 
\begin{align*}
I_3(x,y)\lesssim
\underset{|\alpha|\leq 2\lam}{\int}N_\alpha(x,y)M_\alpha(y)\frac{\dalpha}{|\alpha|}+
 |f_x(x)-f_x(y)|
 \underset{|\alpha|\leq 2\lam}{\int}N_\alpha(x,y)M_\alpha(y)\frac{\dalpha}{|\alpha|},
\end{align*}
with 
\begin{equation}\label{X8}
M_\alpha(y)=\left(1+(f_x(y)-\Delta_{\alpha} f(y))^2\right) \left|\Delta_\alpha f(y)-\Delta_{-\alpha} f(y)\right|. 
\end{equation}
Once again, we argue using Holder's inequality and Minkowski's inequality, to deduce
\begin{align*}
&\iint I_3(x,y)^5\frac{\dx\dy}{|x-y|^2}\\
&\quad\lesssim 
\Bigg[\underset{|\alpha|\leq 2\lam}{\int} \left(\iint |M_\alpha(y)|^5
\left|N_\alpha(x,y)\right|^5\frac{\dx\dy}{|x-y|^{2}}\right)^{\frac{1}{5}} \frac{\dalpha}{|\alpha|} \Bigg]^{5}\\
&\quad\quad+
\Vert f_x\Vert_{\dot W^{\frac{1}{10},10}}^5\Bigg[\underset{|\alpha|\leq 2\lam}{\int}
\left(\iint |M_\alpha(y)|^{10}\left|N_\alpha(x,y)\right|^{10}\frac{\dx\dy}{|x-y|^{2}}\right)^{\frac{1}{10}}
\frac{\dalpha}{|\alpha|} \Bigg]^{5}\\
&\quad
\lesssim \Bigg[\underset{|\alpha|\leq 2\lam}{\int} \Vert M_\alpha\Vert_{L^{10}}
\left(\int\left(\int \left|N_\alpha(x,y)\right|^5\frac{\dx}{|x-y|^{2}}\right)^2\dy\right)^{\frac{1}{10}}
\frac{\dalpha}{|\alpha|} \Bigg]^{5}\\
&\quad\quad+
\Vert f\Vert_{\dot H^{\frac{3}{2}}}^5\Bigg[\underset{|\alpha|\leq 2\lam}{\int} 
\Vert M_\alpha\Vert_{L^{20}}
\left(\int\left(\int \left|N_\alpha(x,y)\right|^{10}\frac{\dx}{|x-y|^{2}}\right)^2 \dy\right)^{\frac{1}{20}}
\frac{\dalpha}{|\alpha|} \Bigg]^{5}.
\end{align*}
By \eqref{X8} and Sobolev inequality,  for any $p>2$, we have
\begin{align*}
\Vert M_\alpha\Vert_{L^{p}}&\lesssim \Vert\Delta_\alpha f-\Delta_{-\alpha} f\Vert_{L^{p}}
+\Vert\Delta_\alpha f-\Delta_{-\alpha} f\Vert_{L^{2p}}^2+\Vert\Delta_\alpha f-f_x\Vert_{L^{4p}}^4\\
&\lesssim  \Vert\Delta_\alpha f-\Delta_{-\alpha} f\Vert_{\dot H^{\frac{1}{2}(1-\frac{2}{p})}}
+\Vert\Delta_\alpha f-\Delta_{-\alpha} f\Vert_{\dot H^{\frac{1}{2}(1-\frac{1}{p})}}^2\\
&\quad+\Vert\Delta_\alpha f-f_x\Vert_{\dot H^{\frac{1}{2}(1-\frac{1}{2p})}}^4.
\end{align*}
On the other hand, by \eqref{X9}, for any $p>2$, we have
\begin{align*}
\left(\int\left(\int \left|N_\alpha(x,y)\right|^{p}\frac{\dx}{|x-y|^{2}}\right)^2 \dy\right)^{\frac{1}{2p}}
&\lesssim \Vert\Delta_{\alpha} f\Vert_{\dot{F}^{\frac{1}{p}}_{2p,p}}
+\Vert\Delta_{\alpha} f\Vert_{\dot{F}^{\frac{1}{3p}}_{6p,3p}}^3\\
&\lesssim  \Vert\Delta_{\alpha} f\Vert_{\dot H^{\frac{1}{2}\left(1+\frac{1}{p}\right)}}
+\Vert\Delta_{\alpha} f\Vert_{\dot H^{\frac{1}{2}\left(1+\frac{1}{3p}\right)}}^3.
\end{align*}
So, after some extra bookkeeping, by 
applying Lemma~\ref{L:2.7}, we get that
\begin{equation}\label{X11}
\iint I_3(x,y)^5\frac{\dx\dy}{|x-y|^2}
\lesssim \left(1+\Vert f\Vert_{\dot H^{\frac{3}{2}}}^{30}\right)
\lA f\rA_{\dot{H}^{\tdm}_\lambda}^{5}.
\end{equation}
Therefore, we have proved \eqref{X12} which 
completes the proof of the proposition.
\end{proof}

\subsection{Estimate of the remainder term}
We now consider the remainder term $R(f)(g)$ which appears in Proposition~\ref{P:4.1}. 
Our goal is to estimate the term $(II)$ in Corollary~\ref{C:5.2}.
\begin{proposition}\label{P:5.6}
For any $\lambda>0$ and any smooth functions $f$ and $g$, there holds
\begin{equation*}
|\big\langle R(f)(g), h\big\rangle|\lesssim\left(1+\Vert f\Vert_{\dot H^{\frac{3}{2}}}^2\right)
\lA f\rA_{\dot{H}^{\tdm}_\lambda}\lA\frac{g_x}{\L{f_x}}\rA_{L^2}\lA\frac{h}{\L{f_x}}\rA_{L^2}  +\frac{1}{\sqrt{\lam}} \Vert g\Vert_{\dot H^{\mez}} \Vert f\Vert_{\dot H^{\frac{3}{2}}}^2\Vert h\Vert_{L^2}.
\end{equation*}
\end{proposition}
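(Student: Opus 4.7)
The plan is to mirror the proof of Proposition~\ref{X1}, with an additional null-type cancellation intrinsic to $R(f)$ as the new ingredient. I decompose $R(f)g=R_1(f)g+R_2(f)g$ according to the two integrals in~\eqref{defi:V}, and then split each further using the bump function $\chi$ into short-scale ($|\alpha|\lesssim\lambda$) and long-scale ($|\alpha|\gtrsim\lambda$) parts.

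For the long-scale pieces, I use that $\mathcal{E}$, $\mathcal{O}$ and $f_x^2/(1+f_x^2)$ are bounded by $1$. For $R_1^{\mathrm{lg}}$, Cauchy-Schwarz in $x$ followed by the elementary bound $\int_{|\alpha|\ge\lambda}\Vert\Delta_\alpha g_x\Vert_{L^2}\,\dalpha\lesssim\lambda^{-1/2}\Vert g\Vert_{\dot H^{1/2}}$ gives the wanted decay in $\lambda$; the factors of $\Vert f\Vert_{\dot H^{3/2}}$ appear on using the null identity (see below) once, exploiting the quadratic cancellation in $\mathcal{E}-f_x^2/(1+f_x^2)$. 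The analogous argument for $R_2^{\mathrm{lg}}$ uses $|\mathcal{O}|\lesssim |S_\alpha f|$ together with~\eqref{an1:x1} applied to $g$, producing the same $\lambda^{-1/2}\Vert g\Vert_{\dot H^{1/2}}\Vert f\Vert_{\dot H^{3/2}}^2\Vert h\Vert_{L^2}$ contribution.

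The core of the proof is the short-scale analysis. For $R_1^{\mathrm{sh}}$, I rely on the algebraic identity
\begin{equation*}
\mathcal{E}(\alpha,\cdot)-\frac{f_x^2}{1+f_x^2}=\frac{1}{2}\sum_{\pm}\frac{E_{\pm\alpha}(f)\bigl(\Delta_{\pm\alpha}f+f_x\bigr)}{\bigl(1+(\Delta_{\pm\alpha}f)^2\bigr)(1+f_x^2)},
\end{equation*}
together with the elementary inequality $(1+f_x^2)\le 2\bigl(1+(\Delta_{\pm\alpha}f)^2\bigr)+2E_{\pm\alpha}(f)^2$, to derive the null bound
\begin{equation*}
\Bigl|\mathcal{E}(\alpha,\cdot)-\frac{f_x^2}{1+f_x^2}\Bigr|\lesssim \frac{|E_\alpha(f)|+|E_\alpha(f)|^3+|E_{-\alpha}(f)|+|E_{-\alpha}(f)|^3}{\langle f_x\rangle^2}.
\end{equation*}
The two factors of $\langle f_x\rangle^{-1}$ are then distributed: one pairs with $h$ to give $h/\langle f_x\rangle$, the other with $\partial_x\Delta_\alpha g=\Delta_\alpha g_x$ to give $g_x/\langle f_x\rangle$. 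Making this precise requires comparing $\langle f_x(x)\rangle$ to $\langle f_x(x-\alpha)\rangle$ on the piece involving $g_x(x-\alpha)$, using $\langle f_x(x)\rangle/\langle f_x(x-\alpha)\rangle\lesssim 1+|\delta_\alpha f_x(x)|$; the resulting error feeds into a Gagliardo seminorm of $f_x$ in $\dot H^{1/2}$ and produces the $1+\Vert f\Vert_{\dot H^{3/2}}^2$ prefactor. The $\alpha$-integration of $\Vert E_\alpha(f)\Vert_{L^p}^k/|\alpha|^2$ over $|\alpha|\le\lambda$ is then handled by Lemma~\ref{L:2.7} with parameters tuned so as to produce a single power of $\Vert f\Vert_{\dot H^{3/2}_\lambda}$. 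For $R_2^{\mathrm{sh}}$, I borrow the $I_1{+}I_2{+}I_3$ decomposition from the proof of Proposition~\ref{X1}, using~\eqref{X2} to write $\mathcal{O}=A_\alpha(f)S_\alpha f$ and splitting $A_\alpha(f)=A_0(f)+(A_\alpha(f)-A_0(f))$; since $|A_0(f)|=|f_x|/\langle f_x\rangle^4\lesssim \langle f_x\rangle^{-3}$ the two required $\langle f_x\rangle^{-1}$ are supplied with a spare, while Lemma~\ref{L:5.5} furnishes the null bound on the difference piece. The main obstacle throughout is the careful bookkeeping needed to pair the two $\langle f_x\rangle^{-1}$'s correctly without invoking any $L^\infty$ bound on $f_x$; the lower polynomial power in $\Vert f\Vert_{\dot H^{3/2}}$ compared with Proposition~\ref{X1} reflects the extra factor of $E_\alpha(f)$ that is already built into $R(f)$.
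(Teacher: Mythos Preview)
Your long-scale analysis contains a genuine error: the ``elementary bound'' $\int_{|\alpha|\ge\lambda}\lA\Delta_\alpha g_x\rA_{L^2}\,\dalpha\lesssim\lambda^{-1/2}\lA g\rA_{\dot H^{1/2}}$ is false---the left-hand side is infinite for any $g\not\equiv 0$, since $\lA\Delta_\alpha g_x\rA_{L^2}\sim|\alpha|^{-1}\lA g_x\rA_{L^2}$ for large $|\alpha|$. The null identity for $\mathcal{E}-f_x^2/(1+f_x^2)$ does not rescue this: the bound $|E_{\pm\alpha}(f)|$ tends to $|f_x|$ rather than $0$ as $|\alpha|\to\infty$, so the coefficient stays of size $O(1)$ at large scales. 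Your $R_2^{\mathrm{lg}}$ sketch has the same difficulty.

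The paper avoids this by two moves that you do not make. First, a parity observation: since $\mathcal{E}(\alpha,\cdot)-F_0$ is even in $\alpha$, the $g_x(x)$ part of $\partial_x\Delta_\alpha g=(g_x(x)-g_x(x-\alpha))/\alpha$ integrates to zero, so the first integral in~\eqref{defi:V} collapses to $\frac{1}{\pi}\int\frac{g_x(\cdot-\alpha)}{\alpha}(\mathcal{E}-F_0)\,\dalpha$, which then \emph{combines} with the $\mathcal{O}$ integral to give the single expression
\[
R(f)g=\frac{1}{\pi}\int_\xR\frac{g_x(\cdot-\alpha)}{\alpha}\Big(B(\Delta_\alpha f)-B(f_x)\Big)\,\dalpha,\qquad B(u)=\frac{u^2}{1+u^2}.
\]
Second, for the long-scale piece of this simplified integral one integrates by parts in $\alpha$, writing $g_x(x-\alpha)=\partial_\alpha(\delta_\alpha g(x))$; this trades $g_x(\cdot-\alpha)/\alpha$ for $\delta_\alpha g$ against an expression that decays like $|\alpha|^{-2}$, and Cauchy--Schwarz plus Hardy then yields the $\lambda^{-1/2}\lA g\rA_{\dot H^{1/2}}\lA f\rA_{\dot H^{3/2}}^2$ term. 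Your short-scale strategy (the null bound on $B(\Delta_\alpha f)-B(f_x)$, the comparison $\L{f_x(x-\alpha)}/\L{f_x(x)}\lesssim 1+|\delta_\alpha f_x|$, and Lemma~\ref{L:2.7}) is essentially the paper's, but after the parity simplification there is only one short-scale integral to treat and no need to import the $I_1{+}I_2{+}I_3$ machinery from Proposition~\ref{X1}.
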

\begin{remark}
In particular, it follows from~\e{est:kappa3} that
\begin{align*}
\la\big\langle R(f)(g), h\big\rangle\ra
&\lesssim\kappa\Big(\frac{1}{\lam}\Big)^{-1}\left(1+\Vert f\Vert_{\dot H^{\frac{3}{2}}}^2\right)\blA\D^{\frac{3}{2},\phi}f\brA_{L^2}  \lA\frac{g_x}{\L{f_x}}\rA_{L^2} \lA\frac{h}{\L{f_x}}\rA_{L^2} \\
&\quad+\frac{1}{\sqrt{\lam}} \Vert g\Vert_{\dot H^{\mez}} \Vert f\Vert_{\dot H^{\frac{3}{2}}}^2\Vert h\Vert_{L^2}.
\end{align*}
\end{remark}
\begin{proof}
 Recall that 
\begin{align*}
R(f)g&\defn-\frac{1}{\pi}\int_\xR\left(\partial_x\Delta_\alpha g\right)
\left(\mathcal{E}\left(\alpha,\cdot\right)-\frac{(\partial_xf)^2}{1+(\partial_xf)^2}\right)\dalpha
+\frac{1}{\pi}\int_\xR\frac{\partial_x g(\cdot-\alpha)}{\alpha}\mathcal{O}\left(\alpha,\cdot\right) \dalpha.
\end{align*}
By symmetry, one has
$$
\int_\xR \frac{g_x(x)}{\alpha}
\left(\mathcal{E}\left(\alpha,\cdot\right)-\frac{(\partial_xf)^2}{1+(\partial_xf)^2}\right)\dalpha=0.
$$
Hence, $R(f)g$ simplifies to
\begin{align*}
R(f)g&=\frac{1}{\pi}\int_\xR \frac{g_x(\cdot-\alpha)}{\alpha}\left(\mathcal{E}\left(\alpha,\cdot\right)
-\frac{(\partial_xf)^2}{1+(\partial_xf)^2}\right)\dalpha+\frac{1}{\pi}\int_\xR\frac{\partial_x g(\cdot-\alpha)}{\alpha}
\mathcal{O}\left(\alpha,\cdot\right) \dalpha\\
&= \frac{1}{\pi}\int_\xR \frac{g_x(\cdot-\alpha)}{\alpha}\left(\frac{\left(\Delta_\alpha f\right)^2}{1+\left(\Delta_\alpha f\right)^2}
-\frac{(\partial_xf)^2}{1+(\partial_xf)^2}\right)\dalpha.
\end{align*}
Introduce the function $B\colon\xR\to\xR$ defined by
$$
B(u)=\frac{u^2}{1+u^2}
$$
and then decompose $R(f,g)=R_{1,\lam}+R_{2,\lam}$ with
\begin{align*}
R_{1,\lam}
&=\frac{1}{\pi}\int_\xR \frac{g_x(\cdot-\alpha)}{\alpha}
\left( B\(\Delta_\alpha f\right)-B(\partial_xf)\right) \chi\Big(\frac{\alpha}{\lambda}\Big)\dalpha,\\
R_{2,\lam}&=\frac{1}{\pi}\int_\xR \frac{g_x(\cdot-\alpha)}{\alpha}\left( B\(\Delta_\alpha f\right)-B(\partial_xf)\right)  \Big(1-\chi\Big(\frac{\alpha}{\lambda}\Big)\Big)\dalpha.
\end{align*}
We have 
\begin{align*}
|\big\langle R(f)g, h\big\rangle|\leq
\Vert \L{f_x}R_{1,\lam}\Vert_{L^2}\lA\frac{h}{\L{f_x}}\rA_{L^2}+
\Vert	R_{2,\lam}\Vert_{L^2}\Vert h\Vert_{L^2}.
\end{align*}
We will estimate these two terms separately. 

\textit{Step 1: estimate of $R_{2,\lam}$.} Note that
$$
g_x(x-\alpha)=\partial_\alpha\big(\delta_{\alpha}g(x)\big).
$$
Then, by integrating by parts in $\alpha$, we get that
\begin{align*}
&|R_{2,\lam}(x)|
\lesssim \underset{|\alpha|\geq \lam}{\int} |\delta_{\alpha}g(x)| \la\Theta_\lam(x,\alpha)\ra\dalpha\quad\text{where}\\
&\Theta_\lam(x,\alpha)\defn
\partial_\alpha
\left[\frac{1}{\alpha}\Big( (B (f_x(x))-B\(\Delta_\alpha f(x)\right))
\Big(1-\chi\Big(\frac{\alpha}{\lambda}\Big)\Big)\Big)\right].
\end{align*}
It follows from the Cauchy-Schwarz inequality that
\begin{align*}
|R_{2,\lam}(x)|^2&\le \bigg(\underset{|\alpha|\geq \lam}{\int}
\sqrt{\frac{|\alpha|}{\lam}}|\delta_{\alpha}g(x)| \la\Theta_\lam(x,\alpha)\ra\dalpha\bigg)^2\\
&
\le \frac{1}{\lam}\bigg(\int_\xR \bigg\vert \frac{\delta_{\alpha}g(x)}{|\alpha|^{\frac14}}
\bigg\vert^2\frac{\dalpha}{|\alpha|}\bigg)
\(\int_\xR\la\alpha\ra^\frac{5}{2} \la\Theta_\lam(x,\alpha)\ra^2\dalpha\right).
\end{align*}
By using again the Cauchy-Schwarz inequality, 
\be\label{r1227}
\lA R_{2,\lam}\rA_{L^2}\le \frac{1}{\sqrt{\lam}}\lA g\rA_{\dot{F}^{\frac14}_{4,2}}
\(\int_\xR\(\int_\xR\la\alpha\ra^\frac{5}{2}
\la\Theta_\lam(x,\alpha)\ra^2\dalpha\right)^2\dx\)^{\frac14}.
\ee
We now have to estimate $\la\Theta_\lam(x,\alpha)\ra$. 

Since $\la B'(u)\ra\le 1$ for all $u\in \xR$, we have
\begin{align*}
&\la B\(\Delta_\alpha f\right)-B(f_x)\ra\le \la \Delta_\alpha f-f_x\ra,\\
&\la \partial_\alpha (B\(\Delta_\alpha f\right)-B(f_x))\ra =\la B'\(\Delta_\alpha f\right)
\partial_\alpha\Delta_\alpha f \ra
\le \la \partial_\alpha \Delta_\alpha f\ra.
\end{align*}
Consequently, 
\begin{align*}
\left|\Theta_\lam(x,\alpha)\right|
&\les \left[\frac{1}{\alpha^2} +\frac{1}{\alpha \lambda}\la\chi'\Big(\frac{\alpha}{\lambda}\Big)\ra \right]
\la \Delta_\alpha f-f_x\ra+\frac{1}{\alpha} \la \partial_\alpha \Delta_\alpha f\ra\\
&\les\frac{1}{\alpha^2}
\la \Delta_\alpha f-f_x\ra+\frac{1}{\alpha} \la \partial_\alpha \Delta_\alpha f\ra,
\end{align*}
where we have used that $\alpha\sim \lam$ on the support of $\chi'(\alpha/\lam)$. 
Now, verify that
\begin{align*}
\partial_\alpha \Delta_\alpha f&=-\frac{f(x)-f(x-\alpha)-\alpha f_x(x-\alpha)}{\alpha^2}\\
&=-\frac{1}{\alpha^2}\int_0^\alpha \big(f_x(x-y)-f_x(x)\big)\dy+\frac{1}{\alpha}\big(f_x(x)-f_x(x-\alpha)\big).
\end{align*}
On the other hand, we have
$$
\frac{1}{\alpha^2}\la \Delta_\alpha f-f_x\ra\le \frac{1}{\alpha^2}\la \frac{1}{\alpha}
\int_0^\alpha \la f_x(x-y)-f_x(x)\ra\dy\ra.
$$
By combining the previous estimates and using $(a+b)^2\le 2a^2+2b^2$, we conclude
\begin{align*}
|\alpha|^\frac{5}{2}\left|\Theta_\lam(x,\alpha)\right|^2&\les 
\frac{1}{|\alpha|^{3/2}}\la\frac{1}{\alpha} \int_0^\alpha \la f_x(x-y)-f_x(x)\ra\dy\ra^2\\
&\quad+\frac{1}{|\alpha|^{3/2}}\la f_x(x)-f_x(x-\alpha)\ra^2.
\end{align*}
Therefore, it follows from Hardy's inequality~\e{Hardy} that
$$
\int_\xR|\alpha|^\frac{5}{2} \la\Theta_\lam(x,\alpha)\ra^2\dalpha \les
\int_\xR \frac{\la \delta_\alpha f_x(x)\ra^2}{|\alpha|^\mez}\frac{\dalpha}{|\alpha|}\cdot
$$
Consequently
$$
\(\int_\xR\(\int_\xR\la\alpha\ra^\frac{5}{2}
\la\Theta_\lam(x,\alpha)\ra^2\dalpha\right)^2\dx\)^{\frac14}\les \lA f_x\rA_{\dot{F}^{\frac14}_{4,2}}.
$$
By plugging this in~\e{r1227}, we get
$$
\lA R_{2,\lam}\rA_{L^2}\le \frac{1}{\sqrt{\lam}}\lA g\rA_{\dot{F}^{\frac14}_{4,2}}\lA f_x\rA_{\dot{F}^{\frac14}_{4,2}}.
$$
Then the Sobolev embedding $\dot{H}^\mez(\xR)\hookrightarrow \dot{F}^{\frac14}_{4,2}(\xR)$ implies that
$$
\lA R_{2,\lam}\rA_{L^2}\le
\frac{1}{\sqrt{\lam}} \Vert g\Vert_{\dot H^{\mez}} \Vert f\Vert_{\dot{H}^{\frac{3}{2}}}^2.
$$
	
\textit{Step 2: estimate of $R_{1,\lam}$.} Notice that 
\be\label{r961}
B(u)-B(v)=\frac{u^2-v^2}{(1+u^2)(1+v^2)}=\frac{(u-v)(u-v+2v)}{(1+u^2)(1+v^2)}\le
\frac{\la u-v\ra+\la u-v\ra^2}{\L{u}^2}\cdot
\ee
Then, since $E_\alpha(f)=\Delta_\alpha f-f_x$, 
$$
\left|R_{1,\lam}(x)\right|\lesssim \underset{|\alpha|\leq 2\lam}{\int}\la g(x-\alpha)\ra 
\frac{\la E_\alpha(f)(x)\ra^2+\la E_\alpha(f)(x)\ra}{\L{f_x(x)}^2}
\frac{\dalpha}{|\alpha|}\cdot
$$
Now introduce the function $\tilde{g}={|g_x|}/{\L{f_x}}$. 
Directly from the previous inequality, we have
\begin{align*}
\left|\L{f_x(x)}R_{1,\lam}(x)\right|\lesssim 
\underset{|\alpha|\leq 2\lam}{\int}\L{f_x(x-\alpha)}\la\tilde{g}(x-\alpha)\ra
\frac{\la E_\alpha(f)(x)\ra^2+\la E_\alpha(f)(x)\ra}{\L{f_x(x)}}
\frac{\dalpha}{|\alpha|}\cdot
\end{align*}

Since 
\begin{equation*}
 \frac{\L{f_x(x-\alpha)}}{\L{f_x(x)}}\leq 1+ \la\frac{\L{f_x(x-\alpha)}}{\L{f_x(x)}}-1\ra\lesssim 
 1+\la f_x(x)-f_x(x-\alpha)\ra,
\end{equation*}
we have
\begin{align*}
\left|\L{f_x(x)}R_{1,\lam}(x)\right|
\lesssim 
\underset{|\alpha|\leq 2\lam}{\int}|\tilde{g}(x-\alpha)| 	W_\alpha(x) \frac{\dalpha}{|\alpha|},
\end{align*}
with 
\begin{align*}
W_\alpha(x)=\big(1+|\delta_{\alpha}f_x(x)|\big)\big(|E_\alpha(f)(x)|^2+|E_\alpha(f)(x)|\big).
\end{align*}
Using H\"older's inequality, we infer that
$$
\left|\L{f_x(x)}R_{1,\lam}(x)\right|
\lesssim 
\left(|D|^{-\frac{1}{4}} (\tilde{g}^{\frac{3}{2}})(x)\right)^{\frac{2}{3}}
\Bigg(\underset{|\alpha|\leq 2\lam}{\int}
W_\alpha(x)^{3} \frac{\dalpha}{|\alpha|^{\frac{3}{2}}}\Bigg)^{\frac{1}{3}}\cdot
$$
Using successively the H\"older inequality, the boundedness of Riesz potentials on Lebesgue spaces (see~\e{Riesz})  and the Minkowski's inequality, we obtain that
\begin{align*}
\Vert \L{f_x}R_{1,\lam}\Vert_{L^2}
&\lesssim 
\left(\int_\xR   \left(|D|^{-\frac{1}{4}} (\tilde{g}^{\frac{3}{2}})(x)\right)^{2}\dx\right)^{\frac{1}{3}}
\Bigg(\int_\xR  \Bigg(	\underset{|\alpha|\leq 2\lam}{\int}
W_\alpha(x)^{3} \frac{\dalpha}{|\alpha|^{\frac{3}{2}}}\Bigg)^{2}\dx\Bigg)^{\frac{1}{6}}
\\&\lesssim 
\Vert\tilde{g}\Vert_{L^2}  \Bigg(\underset{|\alpha|\leq 2\lam}{\int} 
\left(	\int	W_\alpha(x)^{6}\dx \right)^{\frac{1}{2}} \frac{\dalpha}{|\alpha|^{\frac{3}{2}}}\Bigg)^{\frac{1}{3}}.
\end{align*}
Since
\begin{align*}
\int_\xR	W_\alpha(x)^{6}\dx &\lesssim  
\int_\xR \la E_\alpha(f)(x)\ra^{6}\dx+ \int_\xR
|E_\alpha(f)(x)|^{18}\dx+\int_\xR|\delta_\alpha f_x(x)|^{18}\dx,
\end{align*}
we conclude that
\begin{align*}
\int_\xR	W_\alpha(x)^{6}\dx &\lesssim\Vert 
E_\alpha( f)\Vert_{\dot H^{\frac{1}{3}}}^6
+\Vert E_\alpha(f)\Vert_{\dot H^{\frac{4}{9}}}^{18}+ \Vert \delta_\alpha (f_x)\Vert_{\dot H^{\frac{4}{9}}}^{18}.
\end{align*}
Therefore, 
$$
\Vert \L{f_x}R_{1,\lam}\Vert_{L^2}
\lesssim 
\Vert\tilde{g}\Vert_{L^2}
\Bigg(\underset{|\alpha|\leq 2\lam}{\int}\Big[ \Vert E_\alpha(f)\Vert_{\dot H^{\frac{1}{3}}}^3
+\Vert E_\alpha(f)\Vert_{\dot H^{\frac{4}{9}}}^{9}+
\Vert\delta_\alpha( f_x)\Vert_{\dot H^{\frac{4}{9}}}^9\Big] \frac{\dalpha}{|\alpha|^{\frac{3}{2}}}\Bigg)^{\frac{1}{3}}.
$$
We are now in position to apply Lemma~\ref{L:2.7}, which implies that 
\begin{align*}
\Vert \L{f_x}R_{1,\lam}\Vert_{L^2}\lesssim 
\Vert\tilde{g}\Vert_{L^2}\left(1+\Vert f\Vert_{\dot H^{\frac{3}{2}}}^2\right)
\lA f\rA_{\dot{H}^\tdm_\lambda}.
\end{align*}
This completes the proof.
\end{proof}

\subsection{Commutator with the nonlinearity}
Eventually, we want to estimate the term $(III)$ which appears in Corollary~\ref{C:5.2} 
(this the most delicate step). The next proposition contains the key estimate 
which will allow us to commute arbitrary weighted fractional 
Laplacians with~$\mathcal{T}(f)$.

\begin{proposition}\label{P:5.7}
Assume that $\phi$ is as defined in~\e{n10} for some admissible weight~$\kappa$. 
Then, for any $\lambda>0$ and any smooth functions $f$ and~$g$, there holds
\begin{align}\label{n18}
&\la\left\langle \big[\D^{1,\phi},\mathcal{T}(f)\big](g),h\right\rangle\ra\\
&\quad
\lesssim
\left( 1+\blA\D^{\frac{3}{2},\phi} f\brA_{L^2}^{5}\right)  \left(\brA\D^{\frac{3}{2},\phi}g\brA_{L^2}
\lA\frac{f_{xx}}{\L{f_x}}\rA_{L^2}
+\brA\D^{\frac{3}{2},\phi}f\brA_{L^2}\lA\frac{g_{xx}}{\L{f_x}}\rA_{L^2}\right)
\lA\frac{h}{\L{f_x}}\rA_{L^2}.\nonumber
\end{align}
\end{proposition}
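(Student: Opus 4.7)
The starting point is that $|D|^{1,\phi}$ is a Fourier multiplier (hence translation-invariant) and commutes with $\partial_x$, so it may be passed inside the $\alpha$--integral defining $\mathcal{T}(f)$:
$$\big[\D^{1,\phi},\mathcal{T}(f)\big]g=-\frac{1}{\pi}\int_\xR \big[\D^{1,\phi},B(\Delta_\alpha f)\big]\big(\partial_x\Delta_\alpha g\big)\dalpha,\qquad B(u):=\frac{u^2}{1+u^2}=1-\L{u}^{-2}.$$
The inner commutator is then opened up using the representation formula of Proposition~\ref{P:3.5} for $\D^{1,\phi}$ together with the elementary Leibniz identity
$$2uv-u_+v_+-u_-v_-=(2u-u_+-u_-)v+u_+(v-v_+)+u_-(v-v_-),$$
applied with $u=\partial_x\Delta_\alpha g$ and $v=B(\Delta_\alpha f)$. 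This rewrites $\big[\D^{1,\phi},\mathcal{T}(f)\big]g$ as an explicit double integral in $(\alpha,\beta)$ whose integrand contains only \emph{differences} of $B(\Delta_\alpha f)$ at the points $x$ and $x\pm\beta$, multiplied by translates of $\partial_x\Delta_\alpha g$ and the weight $\kappa(1/|\beta|)/\beta^2$.

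The crucial algebraic step is to extract the null-type structure from these differences. Since $B(a)-B(b)=\L{b}^{-2}-\L{a}^{-2}$, an elementary manipulation gives
$$|B(a)-B(b)|\lesssim \frac{|a-b|}{\L{a}\L{b}}\Big(\L{a}^{-1}+\L{b}^{-1}\Big),$$
and this, combined with the pointwise bound $\L{f_x(x)}\lesssim \L{\Delta_\alpha f(x)}\,(1+|E_\alpha f(x)|)$, produces an explicit weight $\L{f_x}^{-2}$ at the cost of extra polynomial factors in $E_{\pm\alpha}f$. This weight is exactly what is required: one copy of $\L{f_x}^{-1}$ is paired in the inner product with $h$ to generate the factor $\lA h/\L{f_x}\rA_{L^2}$, while the other copy is absorbed either by $\partial_{xx}g$ (yielding $\lA g_{xx}/\L{f_x}\rA_{L^2}$) or by the second-difference $\Delta_\alpha f(x)-\Delta_\alpha f(x\pm\beta)$, which via the algebraic identity $\Delta_\alpha f(x)-\Delta_\alpha f(x-\beta)=(\beta/\alpha)(\Delta_\beta f(x)-\Delta_\beta f(x-\alpha))$ is eventually reduced to $f_{xx}$. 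The two distinct combinations in the right-hand side of~\e{n18} correspond precisely to these two ways of distributing the factor.

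The analytic core then consists in splitting the $(\alpha,\beta)$--integration into the regions $|\beta|\le|\alpha|$ and $|\beta|\ge|\alpha|$, and on each region using Minkowski's inequality, H\"older's inequality, the boundedness of the Riesz potentials~\e{Riesz}, the Sobolev embeddings into Triebel--Lizorkin spaces~\e{FB}, and most importantly Lemma~\ref{L:2.7} to convert weighted integrals of $\lA E_\alpha f\rA_{\dot H^s}^\gamma$ and $\lA S_\alpha f\rA_{\dot H^s}^\gamma$ into quantities controlled by $\blA \D^{\tdm,\phi}f\brA_{L^2}$ (via Proposition~\ref{P:3.5} and the equivalence $\phi\sim\kappa$). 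The polynomial factor $1+\blA \D^{\tdm,\phi}f\brA_{L^2}^{5}$ arises when the lower-order error terms coming from the null-structure extraction (involving $|E_\alpha f|^k$ with $k\le 4$) are estimated by Sobolev embeddings $\dot H^{1/2}\hookrightarrow L^p$ for various finite $p$, in exactly the same spirit as Step 2 of the proof of Proposition~\ref{X1}. The main obstacle is the bookkeeping: one must systematically track, across the many sub-pieces produced by the two scale splittings and by the polynomial remainders from the null expansion, which factor in each product carries the $f_{xx}$ versus the $g_{xx}$, and verify that the total power of $\blA \D^{\tdm,\phi}f\brA_{L^2}$ never exceeds five. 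A secondary difficulty is that Proposition~\ref{P:commD} applies only for $\sigma<1/2$ and hence cannot be used as a black box at order~$1$; its proof strategy must be re-run by hand with $B(\Delta_\alpha f)$ playing the role of $\gamma(h)$, and it is precisely in that step that the null cancellation enters.
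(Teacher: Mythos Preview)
Your outline has a genuine gap at the level of the Leibniz decomposition. The identity you use,
\[
2uv-u_+v_+-u_-v_-=(2u-u_+-u_-)v+u_+(v-v_+)+u_-(v-v_-),
\]
leaves you with terms of the form $(\partial_x\Delta_\alpha g)(x\pm\beta)\cdot\bigl(B(\Delta_\alpha f)(x)-B(\Delta_\alpha f)(x\pm\beta)\bigr)$ inside the $\beta$--integral with weight $\kappa(1/|\beta|)/\beta^2$. Once you take absolute values and apply your first-difference bound $|B(a)-B(b)|\lesssim \L{a}^{-1}\L{b}^{-1}(\L{a}^{-1}+\L{b}^{-1})|a-b|$, the integrand carries only a single factor $|\delta_{\pm\beta}\Delta_\alpha f(x)|\sim|\beta|$ near $\beta=0$, while the $g$--factor $(\partial_x\Delta_\alpha g)(x\pm\beta)$ contributes no $\beta$--decay whatsoever. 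This leaves a $\kappa(1/|\beta|)/|\beta|$ singularity, which is \emph{not} integrable on any neighbourhood of the origin. Your proposed $|\beta|\le|\alpha|$ versus $|\beta|\ge|\alpha|$ splitting does not cure this: the divergence is at $\beta\to0$ for each fixed $\alpha$.

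The paper's proof avoids this by a finer decomposition. It writes the commutator as $\Delta_\alpha g_x\cdot\D^{1,\phi}F_\alpha$ plus a remainder $\Gamma_\alpha$ involving \emph{products} $\delta_\beta(\Delta_\alpha g_x)\cdot\delta_\beta F_\alpha$ of first differences. In the remainder the two $\delta_\beta$ factors together give the required $|\beta|^2$ decay. In the first piece, $\D^{1,\phi}F_\alpha$ is a genuine second difference, and the crucial Lemma~\ref{Z10} shows that $\delta_{-\beta}\delta_\beta B(\Delta_\alpha f)$ equals $2\Delta_\alpha f\,\L{\Delta_\alpha f}^{-4}\,\delta_{-\beta}\delta_\beta\Delta_\alpha f$ up to terms \emph{quadratic} or higher in $\delta_{\pm\beta}\Delta_\alpha f$; this preserves the second-difference cancellation while extracting the null weight $\L{\Delta_\alpha f}^{-2}$. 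Your sketch never invokes a second-difference identity of this type, and it is precisely this step that makes the $\beta$--integral convergent. Re-running the strategy of Proposition~\ref{P:commD} ``by hand'' will not help here, because that proposition concerns $\D^{\sigma,\phi}$ with $\sigma<1/2$, where the kernel singularity is milder; at order~$1$ one genuinely needs the symmetric Leibniz split and Lemma~\ref{Z10}.
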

\begin{proof}
In view of the obvious inequality
\begin{align*}
\left|\left\langle \big[\D^{1,\phi},\mathcal{T}(f)\big]g,h\right\rangle \right|
&\leq \blA \L{f_x}\big[\D^{1,\phi},\mathcal{T}(f)\big]g\brA_{L^2}\lA\frac{h}{\L{f_x}}\rA_{L^2},
\end{align*}
it is enough to prove that 
\begin{align}\label{goal}
&
\blA\L{f_x}\big[\D^{1,\phi},\mathcal{T}(f)\big](g)\brA_{L^2}\\
&\quad\quad\lesssim
\left( 1+\blA\D^{\frac{3}{2},\phi} f\brA_{L^2}^{5}\right)  \left(\brA\D^{\frac{3}{2},\phi}g\brA_{L^2}
\lA\frac{f_{xx}}{\L{f_x}}\rA_{L^2}+\brA\D^{\frac{3}{2},\phi}f\brA_{L^2}
\lA\frac{g_{xx}}{\L{f_x}}\rA_{L^2}\right).\nonumber
\end{align}

Recall that the operator $\mathcal{T}(f)$ is defined by
$$
\mathcal{T}(f)g
 = -\frac{1}{\pi}\int_\xR\left(\Delta_\alpha g_x\right)F_\alpha\dalpha\quad\text{where}\quad
 F_\alpha=\frac{\left(\Delta_\alpha f\right)^2}{1+\left(\Delta_\alpha f\right)^2}\cdot
$$
Let us introduce 
$$
\Gamma_\alpha\defn\D^{1,\phi}\left[F_\alpha\Delta_\alpha g_x \right]-F_\alpha\D^{1,\phi}
\left[\Delta_\alpha g_x\right]-\Delta_\alpha g_x\D^{1,\phi}\left[F_\alpha\right].
$$
With this notation, we have 
$$
\left[\D^{1,\phi},\mathcal{T}(f)\right](g)
=  -\frac{1}{\pi} \int_\xR \Delta_\alpha g_x\D^{1,\phi}F_\alpha  \dalpha
-\frac{1}{\pi} \int_\xR  \Gamma_\alpha  \dalpha.
$$

By definition of $\mathcal{T}(f)$ we have
\be\label{r980}
\begin{aligned}
&\Vert \L{f_x}\big[\D^{1,\phi},\mathcal{T}(f)\big](g)\Vert_{L^2}\les (I)+(II) \qquad \text{where} \\
&(I)\defn \Big\Vert \L{f_x}\int
\la \Delta_\alpha g_x\ra \bla\D^{1,\phi}F_\alpha\bra \dalpha\Big\Vert_{L^2},\\
&(II)\defn
\Big\Vert \L{f_x}\int |\Gamma_\alpha|\dalpha\Big\Vert_{L^2}.
\end{aligned}
\ee

{\em Step 1: estimate of $(I)$}. 
By Holder's inequality and Minkowski's inequality, one has 
$(I)\le (I_A)\times (I_B)$
with
\begin{align*}
(I_A)&\defn 
\left(\int_\xR\left(\int_\xR\big\vert\Delta_\alpha g_x(x)\big\vert^{\frac{3}{2}}\L{f_x(x)}^{-\frac{3}{2}} |\alpha|^{-\frac{1}{2}}\dalpha\right)^{4}\dx\right)^{\frac{1}{6}},\\
(I_B)&\defn\left(\iint \L{f_x(x)}^6\big\vert\D^{1,\phi}F_\alpha(x)\big\vert^3
|\alpha|\dalpha\dx\right)^{\frac{1}{3}},
\end{align*}
Set
\begin{equation*}
u(x)=\frac{|g_{xx}(x)|}{\L{f_x(x)}}\cdot
\end{equation*}
Since 
\begin{align*}
|\Delta_\alpha g_x(x)|&\leq  \left|\fint_0^\alpha u(x-\tau) \L{f_x(x-\tau)} \dtau\right|\\&\leq 
 \left|\fint_0^\alpha u(x-\tau) |\L{f_x(x-\tau)} -\L{f_x(x)} |\dtau\right|+  \left|\fint_0^\alpha u(x-\tau) \dtau\right|\L{f_x(x)} 
\\&\lesssim 
\left|\fint_0^\alpha u(x-\tau)^{\frac{10}{9}}\dtau\right|^{\frac{9}{10}}
\left|\fint_0^\alpha |\delta_{\tau}f_x(x)  |^{10}\dtau\right|^{\frac{1}{10}}
+ \left|\fint_0^\alpha u(x-\tau) \dtau\right|\L{f_x(x)},
\end{align*}
we have
\begin{align*}
 &\big\vert\Delta_\alpha g_x(x)\big\vert^{\frac{3}{2}}\L{f_x(x)}^{-\frac{3}{2}}
\\&\lesssim 
\left|\fint_0^\alpha u(x-\tau)^{\frac{10}{9}}\dtau\right|^{\frac{27}{20}} \left|\fint_0^\alpha |\delta_{\tau}f_x(x)  |^{10}\dtau\right|^{\frac{3}{20}}+ \left|\fint_0^\alpha u(x-\tau) \dtau\right|^{\frac{3}{2}}
\\&\lesssim 
\left|\fint_0^\alpha u(x-\tau)^{\frac{3}{2}}\dtau\right|
\left|\fint_0^\alpha |\delta_{\tau}f_x(x)|^{10}\dtau\right|^{\frac{3}{20}}
+\left|\fint_0^\alpha u(x-\tau)^{\frac{3}{2}} \dtau\right|.
\end{align*}
Thus, $(I_A)$ is bounded from above by
\begin{align*}
&\left(\int_\xR\left(	\int \left|\fint_0^\alpha u(x-\tau)^{\frac{3}{2}} \dtau\right| |\alpha|^{-\frac{1}{2}}\dalpha\right)^{4}\dx\right)^{\frac{1}{6}}\\
&+
\left(\int_\xR\left(\int \left|\fint_0^\alpha u(x-\tau)^{\frac{3}{2}}\dtau\right|
\left|\fint_0^\alpha |\delta_{\tau}f_x(x) |^{10}\dtau\right|^{\frac{3}{20}}
|\alpha|^{-\frac{1}{2}}\dalpha\right)^{4}\dx\right)^{\frac{1}{6}}.
\end{align*}
This implies that
\begin{align*}
(I_A)&\lesssim
\left(\int_\xR\left(	\int \left|\fint_0^\alpha u(x-\tau)^{\frac{3}{2}} \dtau\right| |\alpha|^{-\frac{1}{2}}\dalpha\right)^{4}\dx\right)^{\frac{1}{6}}\\
&\quad
+
\left(\int\left(	\int \left|\fint_0^\alpha u(x-\tau)^{\frac{3}{2}}\dtau\right|^{\frac{10}{9}}
|\alpha|^{-\frac{1}{3}}\dalpha\right)^{6}\dx\right)^{\frac{1}{10}}\\
&\quad\quad\times
\left(\int	\int  \left|\fint_0^\alpha |\delta_{\tau}f_x(x)  |^{10}\dtau\right|^{\frac{3}{2}}
|\alpha|^{-2}\dalpha\dx\right)^{\frac{1}{15}}.
\end{align*}
Now we use Hardy's inequality (see~\e{Hardy}) to write, for any function $v_1\geq 0$,
\begin{equation}
\int \left|\fint_0^\alpha v_1(x-\delta)\ddelta\right|^{p} |\alpha|^{-\beta}
\dalpha\lesssim \int v_1(x-\alpha)^p|\alpha|^{-\beta}\dalpha\quad 
(\forall  ~p\geq 1,~\forall \beta>0),
\end{equation}
and 
\begin{equation}
\int  \left|\fint_0^\alpha |\delta_{\tau}f_x(x)  |^{10}\dtau\right|^{\frac{3}{2}}
|\alpha|^{-2}\dalpha\lesssim  	\int   |\delta_{\alpha}f_x(x) |^{15} |\alpha|^{-2}\dalpha.
\end{equation}
Using also the representation 
formula~\e{b1}, it follows that
$$
(I_A)\lesssim \blA \D^{-\frac{1}{2}} ( u^{\frac{3}{2}})\brA_{L^4}^{\frac{2}{3}}
+\blA \D^{-\frac{2}{3}}(u^{\frac{5}{3}})\brA_{L^6}^{\frac{3}{5}} \Vert f_x\Vert_{\dot W^{\frac{1}{15},15}}.
$$
Now, by using the boundedness of Riesz potentials on Lebesgue spaces (see~\e{Riesz}) 
and Sobolev inequality, 
\be\label{X13a}
(I_A)\lesssim  \Vert	 u^{\frac{3}{2}}\Vert_{L^{\frac{4}{3}}}^{\frac{2}{3}}
+\Vert u^{\frac{5}{3}}\Vert_{L^{\frac{6}{5}}}^{\frac{3}{5}} \Vert f\Vert_{\dot H^{\frac{3}{2}}}.
\ee
By definition of $u$, this means that
\begin{equation}
(I_A)\lesssim (1+\Vert f\Vert_{\dot H^{\frac{3}{2}}})\lA\frac{g_{xx}}{\L{f_x}}\rA_{L^2}.
\end{equation}
This completes the analysis of $(I_A)$. 

As regards the second integral $(I_B)$, 
we will prove that 
\begin{equation}\label{X13}
(I_B)\lesssim  
\blA \D^{\tdm,\phi}f\brA_{L^2}^2\left(1+\blA \D^{\tdm,\phi}f\brA_{L^2}^3\right).
\end{equation}
Recall that
$$
(I_B)\defn\left(\iint \L{f_x(x)}^6\big\vert\D^{1,\phi}F_\alpha(x)\big\vert^3
|\alpha|\dalpha\dx\right)^{\frac{1}{3}}.
$$

 To prove \eqref{X13},  we will bound the integrand, starting from
\begin{align*}
\L{f_x(x)}^2\bla\D^{1,\phi}F_\alpha(x)\bra&\lesssim \L{f_x(x)}^2
\la\D^{1,\phi}F_\alpha(x)-2\frac{\Delta_{\alpha}f(x)}{\langle \Delta_{\alpha}f(x)\rangle^4 } \D^{1,\phi}\Delta_{\alpha}f(x)\ra\\
&\quad+\frac{\L{f_x(x)}^2}{\langle \Delta_{\alpha}f(x)\rangle^2 } |\D^{1,\phi}\Delta_{\alpha}f(x)|.
\end{align*}
Then, we want to use the formula~\e{d1phi}, which states that
$$
\D^{1,\phi}U(x)=\frac{1}{4}\int_{\xR}\frac{2U(x)-U(x+z)-U(x-z)}{z^2}
\kappa\left(\frac{1}{|z|}\right) \dz.
$$
For the sake of shortness, set 
\begin{equation*}
\diff\!\mu(z)=\kappa\left(\frac{1}{|z|}\right)\frac{\dz}{z^2},
\end{equation*}
and observe that $2U(x)-U(x+z)-U(x-z)=\delta_{-z}\delta_z U(x)$. 
By combining this 
with 
the fact that $E_\alpha(f)=\Delta_\alpha f-f_x$ and the elementary inequality
\begin{equation*}
\L{f_x(x)}^2\lesssim \L{\Delta_{\alpha}f(x)}^2(1+|E_\alpha(f)(x)|^2),
\end{equation*}
we deduce that
\begin{align*}
&\L{f_x(x)}^2\bla\D^{1,\phi}F_\alpha(x)\bra\\
&\lesssim  (1+|E_\alpha(f)(x)|^2) \int \L{\Delta_{\alpha}f(x)}^2\la \delta_{-z}\delta_{z}
F_\alpha(x)-2\frac{\Delta_{\alpha}f(x)}{\langle \Delta_{\alpha}f(x)\rangle^4 } \delta_{-z}\delta_{z}
\Delta_{\alpha}f(x)\ra \diff\!\mu(z)\\
&\quad+\left(1+|E_\alpha(f)(x)|^2\right)
 \bla\D^{1,\phi}\Delta_{\alpha}f(x)\bra.
\end{align*}

To estimate the latter integral, we pause to establish an elementary inequality which will allow 
us to bound the term
$$
\delta_{-z}\delta_{z}
F_\alpha(x)-2\frac{\Delta_{\alpha}f(x)}{\langle \Delta_{\alpha}f(x)\rangle^4 } \delta_{-z}\delta_{z}
\Delta_{\alpha}f(x).
$$

\begin{lemma}\label{Z10}
There exists a constant $C>0$ such that, for any triple of real numbers $(x_1,x_2,x_3)$, the quantity
$$
\Sigma(x_1,x_2,x_3)=\frac{2x_1^2}{1+x_1^2}-\frac{x_2^2}{1+x_2^2}-\frac{x_3^2}{1+x_3^2}
$$
satisfies
\begin{multline}\label{Z4}
\left|\Sigma(x_1,x_2,x_3)-\frac{2x_1}{\langle x_1\rangle^4}\left(2x_1-x_2-x_3\right)\right|\\
\le \frac{C}{\langle x_1\rangle^2}\left(|x_1-x_2|^2+|x_1-x_3|^2+|x_1-x_2|^3+|x_1-x_3|^3\right).
\end{multline}	
\end{lemma}
\begin{proof}
By an elementary calculation, one verifies that
$$
\Sigma(x_1,x_2,x_3)=\frac{1}{\langle x_1\rangle^2\langle x_2\rangle^2 \langle x_3\rangle^2}\left( (1+x_2^2)(x_1^2-x_3^2)+(1+x_3^2)(x_1^2-x_2^2)\right).
$$
This yields the identity
\begin{align*}
&\Sigma(x_1,x_2,x_3)-\frac{2x_1}{\langle x_1\rangle^4}\left(2x_1-x_2-x_3\right)\\&=\frac{\sigma_1(x_1,x_2,x_3)}{\langle x_1\rangle^2\langle x_2\rangle^2 \langle x_3\rangle^2}+\left(\frac{x_2+x_1}{\langle x_1\rangle^2\langle x_2\rangle^2 }-\frac{2x_1}{\langle x_1\rangle^4}\right)(2x_1-x_2-x_3)\\&
=\frac{\sigma_1(x_1,x_2,x_3)}{\langle x_1\rangle^2\langle x_2\rangle^2 \langle x_3\rangle^2}+\frac{(x_2-x_1)(2x_1-x_2-x_3)}{\langle x_1\rangle^2\langle x_2\rangle^2 }\\
&\quad+\frac{2x_1(x_1+x_2)(x_1-x_2)(2x_1-x_2-x_3)}{\langle x_1\rangle^4\langle x_2\rangle^2 },
\end{align*}
with 
$$\sigma_1(x_1,x_2,x_3)=(x_1x_2+x_2x_3+x_3x_1-1)(x_2-x_3)(x_1-x_3).$$
Now we apply the triangle inequality together with  $2ab\le a^2+b^2$  to get
\begin{align*}
\la x_1x_2+x_2x_3+x_3x_1-1\ra\lesssim (1+ \la x_1-x_2\ra) \langle x_2\rangle^2 \langle x_3\rangle^2,
\end{align*}
together with $|x_1(x_1+x_2)|\lesssim \langle x_1\rangle^2\langle x_2\rangle^2$. 
As a result, we have
\begin{align*}
&\left|\Sigma(x_1,x_2,x_3)-\frac{2x_1}{\langle x_1\rangle^4}\left(2x_1-x_2-x_3\right)\right|\\&\lesssim\frac{1}{\langle x_1\rangle^2}(1+ \la x_1-x_2\ra)|x_2-x_3||x_1-x_3|+\frac{1}{\langle x_1\rangle^2}|x_2-x_1||2x_1-x_2-x_3|,
\end{align*} which implies \eqref{Z4}. This completes the proof.
\end{proof}

Consequently we deduce, 
\begin{align*}
\L{f_x(x)}^2\big\vert\D^{1,\phi}F_\alpha(x)|&\lesssim  \left(1+|E_\alpha(f)(x)|^2\right) \int \left(\big\vert\delta_{z}\Delta_{\alpha}f(x)\big\vert^2+\big\vert\delta_{z}\Delta_{\alpha}f(x)\big\vert^3\right)\diff\!\mu(z)\\
&+\left(1+|E_\alpha(f)(x)|^2\right)
\bla\D^{1,\phi}\Delta_{\alpha}f(x)\bra.
\end{align*}
Thus, 
\begin{align*}
(I_B)^3&=\iint \L{f_x(x)}^6\big\vert\D^{1,\phi}F_\alpha(x)\big\vert^3 |\alpha|\dalpha\dx\\
&\lesssim  \iint\left(\int \left(1+|E_\alpha(f)(x)|^2\right)  |\delta_{z}\Delta_\alpha f(x)|^2\diff\!\mu(z)\right)^3  |\alpha|\dalpha\dx\\
&\quad+ \iint \left(\int \left(1+|E_\alpha(f)(x)|^2\right)  |\delta_{z}\Delta_\alpha f(x)|^3
\diff\!\mu(z)\right)^3 |\alpha|\dalpha\dx\\
&\quad+\iint \left(1+|E_\alpha(f)(x)|^6\right)
|\D^{1,\phi}\Delta_{\alpha}f(x)|^3 |\alpha|\dalpha\dx.
\end{align*}
Then, by using  the Minkowski's inequality and Holder's inequality, 
\begin{align*}
&(I_B)
\lesssim  \int \left(\int\lA\Delta_\alpha \delta_zf\rA_{L^6}^6|\alpha|\dalpha \right)^{\frac{1}{3}} \diff\!\mu(z) +\int \left(\int\lA\Delta_\alpha \delta_zf\rA_{L^9}^9|\alpha|\dalpha \right)^{\frac{1}{3}} \diff\!\mu(z)  \\&+\int\left(\int\lA\Delta_\alpha \delta_zf\rA_{L^{12}}^{12}
|\alpha|^4\dalpha \right)^{\frac{1}{6}}   \diff\!\mu(z)\left(\int\lA E_\alpha(f)\rA_{L^{12}}^{12}\frac{\dalpha}{\alpha^2} \right)^{\frac{1}{6}} \\&+\int\left(\int\lA\Delta_\alpha \delta_zf\rA_{L^{18}}^{18}
|\alpha|^4\dalpha \right)^{\frac{1}{6}}  \diff\!\mu(z) \left(\int\lA E_\alpha(f)\rA_{L^{12}}^{12}\frac{\dalpha}{\alpha^2} \right)^{\frac{1}{6}}\\
&+\left(\int \blA\D^{1,\phi}\Delta_{\alpha}f\brA_{L^3}^3|\alpha|\dalpha\right)^{\frac{1}{3}}\\
&+\left(\int\blA\D^{1,\phi}\Delta_{\alpha}f\brA_{L^6}^6|\alpha|^4\dalpha\right)^{\frac{1}{6}}
\left(\int\lA E_\alpha(f)\rA_{L^{12}}^{12}\frac{\dalpha}{\alpha^2}\right)^{\frac{1}{6}}.
\end{align*}
By Sobolev's inequality  and  Lemma~\ref{L:2.7}, we have \label{page41}
\begin{align*}
&\int \left(\int\lA\Delta_\alpha \delta_zf\rA_{L^6}^6|\alpha|\dalpha \right)^{\frac{1}{3}} \diff\!\mu(z) 
\lesssim \int \left(\int\blA\Delta_\alpha \delta_z|D|^{\frac{1}{3}}f\brA_{L^2}^6|\alpha|\dalpha \right)^{\frac{1}{3}} \diff\!\mu(z)\\&\quad\quad\quad\quad\quad\quad\lesssim\int \int |\xi|^2|\mathcal{F} (\delta_{z} f)(\xi)|^2\kappa\left(\frac{1}{|z|}\right)\frac{\dz}{z^2}\\
& \quad\quad\quad\quad\quad\quad\lesssim  \int \int |\xi|^2\min\{\la z\ra \la \xi\ra,1\}^2|\hat f(\xi)|^2\kappa\left(\frac{1}{|z|}\right)\frac{\dz}{z^2}
\\& \quad\quad\quad\quad\quad\quad \lesssim \blA \D^{\tdm,\phi}f\brA_{L^2}^2,
\end{align*}
where we have used again the equivalence $\phi\sim\kappa$ to obtain the last inequality. 
By the same token, we have 
\begin{align*}
\int \left(\int\lA\Delta_\alpha \delta_zf\rA_{L^9}^9|\alpha|\dalpha \right)^{\frac{1}{3}} \diff\!\mu(z)
&\lesssim\int \left(\int |\xi|^{\frac{7}{3}} \min\{|\xi||z|,1\}^2|\hat f(\xi)|^2 \dxi\right)^{\frac{3}{2}}\diff\!\mu(z)\\
&\overset{\e{Min}, \e{est:kappa3}}{\lesssim}    \left(\int |\xi|^{3}\kappa(|\xi|)^{\frac{2}{3}} |\hat f(\xi)|^2 \dxi\right)^{\frac{3}{2}}\\
&\overset{\e{n60b}}{\lesssim} \blA \D^{\tdm,\phi}f\brA_{L^2}^3.
\end{align*}
Similarly, we also obtain that one can estimate by $\blA \D^{\tdm,\phi}f\brA_{L^2}$ the five following quantities:
\begin{align*}
&\left(\int\left(\int\lA\Delta_\alpha \delta_zf\rA_{L^{12}}^{12}
|\alpha|^4\dalpha \right)^{\frac{1}{6}}   \diff\!\mu(z) \right)^{\frac{1}{2}},
\quad \left(\int \blA\D^{1,\phi}\Delta_{\alpha}f\brA_{L^3}^3|\alpha|\dalpha\right)^{\frac{1}{3}},
\\
&
\left(\int\left(\int\lA\Delta_\alpha \delta_zf\rA_{L^{18}}^{18}
|\alpha|^4\dalpha \right)^{\frac{1}{6}}  \diff\!\mu(z)\right)^{\frac{1}{3}},\quad 
\left(\int\blA\D^{1,\phi}\Delta_{\alpha}f\brA_{L^6}^6|\alpha|^4\dalpha\right)^{\frac{1}{12}},\\
&\left(\int\lA E_\alpha(f)\rA_{L^{12}}^{12}\frac{\dalpha}{\alpha^2}\right)^{\frac{1}{6}}.
\end{align*}
Thus, 
\begin{align*}
(I_B)
\lesssim \blA \D^{\tdm,\phi}f\brA_{L^2}^2\left(1+\blA \D^{\tdm,\phi}f\brA_{L^2}^3\right),
\end{align*}
which proves~\eqref{X13}.

By combining \e{X13a} and \e{X13}, we conclude that the term $(I)$ which appears in~\e{r980} satisfies
\begin{align*}
(I)&	\lesssim\blA \D^{\tdm,\phi}f\brA_{L^2}^2\left(1+\blA \D^{\tdm,\phi}f\brA_{L^2}^4\right) \lA\frac{g_{xx}}{\L{f_x}}\rA_{L^2},
\end{align*}
which completes the analysis of $(I)$. 

\textit{Step 2.2: estimate of $(II)$.} It remains to estimate the second term in \e{r980}. We want to prove that
\begin{equation}
\label{w}
\bigg\Vert \L{f_x}\int |\Gamma_\alpha|\dalpha\bigg\Vert_{L^2}
\lesssim\brA \D^{\frac{3}{2},\phi}g\brA_{L^2}
\left( 1+\blA\D^{\frac{3}{2},\phi} f\brA_{L^2}^{4}\right) \lA\frac{f_{xx}}{\L{f_x}}\rA_{L^2}.
\end{equation}

We have 
\begin{equation*}
\L{f_x(x)}\la \Gamma_\alpha(x)\ra
\lesssim\int \L{f_x(x)}\la\delta_zF_\alpha(x)\ra
\la \delta_z\Delta_\alpha g_x(x)\ra \kappa\left(\frac{1}{|z|}\right)\frac{\dz}{z^2}\cdot
\end{equation*}
Using again the estimate (see~\e{r961})
\begin{equation*}
\left|\frac{x_1^2}{1+x_1^2}-\frac{x_2^2}{1+x_2^2}\right|\lesssim \frac{1}{1+x_1^2}
\left(|x_1-x_2|^2+|x_1-x_2|\right),
\end{equation*}
we have
\begin{align*}
\la \delta_zF_\alpha(x)\ra 
&\les \L{\Delta_{\alpha} f(x)}^{-2}|\delta_z\Delta_{\alpha} f(x)|^2+ \L{\Delta_{\alpha} f(x)}^{-2}|\delta_z\Delta_{\alpha} f(x)|.
\end{align*}	
Combining this with 
\begin{equation*}
\L{f_x(x)}^2 \L{\Delta_{\alpha} f(x)}^{-2}\lesssim 1+|E_\alpha(f)(x)|^2,
\end{equation*}
one gets
\begin{multline*}
\L{f_x(x)}\la \Gamma_\alpha(x)\ra\lesssim \left(1+|f_x(x)-\Delta_{\alpha} f(x)|^2\right)
\int \L{f_x(x)}^{-1}|\delta_z\Delta_{\alpha} f(x)|\times \\
\times \left(1+|\delta_z\Delta_{\alpha} f(x)|\right)
|\delta_z\Delta_\alpha g_x(x)| \kappa\left(\frac{1}{|z|}\right)\frac{\dz}{z^2}\cdot
\end{multline*}
Set 
\begin{equation*}
u(x)=\frac{|f_{xx}(x)|}{\L{f_x(x)}}\cdot
\end{equation*}
One has  
\begin{align*}
|\delta_z\Delta_\alpha f(x)|&\leq\frac{1}{|\alpha|} \left|\int_{0}^{\alpha}
\int_{0}^{z} u(x-\tau-z_1)\L{f_x(x-\tau-z_1)}\dz_1\dtau\right|\\&\leq 
\frac{1}{|\alpha|} \left|\int_{0}^{\alpha}\int_{0}^{z} u(x-\tau-z_1)|\L{f_x(x-\tau-z_1)}-\L{f_x(x-\tau)}|\dz_1\dtau\right|\\
&\quad+\frac{1}{|\alpha|} \left|\int_{0}^{\alpha}\int_{0}^{z} u(x-\tau-z_1)\la \L{f_x(x-\tau)}-\L{f_x(x)}\ra\dz_1\dtau\right|\\
&\quad+\frac{\L{f_x(x)}}{|\alpha|} \left|\int_{0}^{\alpha}\int_{0}^{z} u(x-\tau-z_1)\dz_1\dtau\right|.
\end{align*}
Then, thanks to $|\langle x_1\rangle-\langle x_2\rangle |\lesssim |x_1-x_2|$, 
we infer that
\begin{align*}
&\L{f_x(x)}|^{-1}|\delta_{z}\Delta_\alpha f(x)|\lesssim |z|\left|\fint_{0}^{\alpha}\fint_{0}^{z} u(x-\tau-z_1)|\delta_{z_1}f_x(x-\tau)|\dz_1\dtau\right|\\
&\quad\quad+ |z|\left|\fint_{0}^{\alpha}\fint_{0}^{z} u(x-\tau-z_1)|\delta_\tau f_x(x)|\dz_1\dtau\right|
+ |z|\left|\fint_{0}^{\alpha}\fint_{0}^{z} u(x-\tau-z_1)\dz_1\dtau\right|\\
&\quad\quad\lesssim |z|
\left|\fint_{0}^{\alpha}\fint_{0}^{z} u(x-\tau-z_1)^{\frac{10}{9}}\dz_1\dtau\right|^{\frac{9}{10}} J_\alpha(x,z),
\end{align*}
where 
$$
J_\alpha(x,z)=1+\left|\fint_{0}^{\alpha} |\delta_{\tau}f_x(x)|^{10}\dtau\right|^{\frac{1}{10}}
+\left|\fint_{0}^{\alpha}\fint_{0}^{z} |\delta_{z_1}f_x(x-\tau)|^{10}\dz_1\dtau\right|^{\frac{1}{10}}.
$$
Set 
$$
\Phi_\alpha(x,z)
=J_\alpha(x,z)\left(1+|f_x(x)-\Delta_{\alpha} f(x)|^2\right)
\left(1+|\delta_{z}\Delta_{\alpha} f(x)|\right)\la \delta_z\Delta_\alpha g_x(x)\ra.
$$
Then, repeating arguments already used before, we find that
\begin{align*}
&\L{f_x(x)} \int\la \Gamma_\alpha(x)\ra \dalpha\\
&\qquad\lesssim \iint_{|\alpha|\leq \lam}
\left|\fint_{0}^{\alpha}\fint_{0}^{z} u(x-\tau-z_1)^{\frac{10}{9}}\dz_1\dtau\right|^{\frac{9}{10}}
\Phi_\alpha(x,z) \kappa\left(\frac{1}{|z|}\right)|\alpha|\frac{\dz \dalpha}{\la z\ra \la \alpha\ra}\\
&\qquad\lesssim\left(\iint
\left\vert\alpha|^{\frac{1}{18}}|z|^{\frac{1}{18}}\fint_{0}^{\alpha}\fint_{0}^{z} u(x-\tau-z_1)^{\frac{10}{9}}\dz_1\dtau\right|\frac{\dz \dalpha}{\la z\ra \la \alpha\ra}\right)^{\frac{9}{10}}\\
&\qquad\qquad\qquad\times\left(\iint\Phi_\alpha(x,z)^{10} \kappa\left(\frac{1}{|z|}\right)^{10}|\alpha|^{\frac{17}{2}}\frac{\dz \dalpha}{|z|^{\frac{3}{2}}}\right)^{\frac{1}{10}}.
\end{align*}
By Hardy's inequality ( see \eqref{Hardy}),
\begin{align*}
& \iint
\left\vert\alpha|^{\frac{1}{18}}|z|^{\frac{1}{18}}\fint_{0}^{\alpha}\fint_{0}^{z} u(x-\tau-z_1)^{\frac{10}{9}}\dz_1\dtau\right|\frac{\dz \dalpha}{\la z\ra \la \alpha\ra}\\
&\quad\lesssim \iint
|\alpha|^{\frac{1}{18}}|z|^{\frac{1}{18}} u(x-\alpha-z)^{\frac{10}{9}}\frac{\dz \dalpha}{\la z\ra \la \alpha\ra}\\&\quad=c_1 \iint
|\alpha|^{\frac{1}{18}} |D|^{-\frac{1}{18}}(u^{\frac{10}{9}})(x-\alpha)\frac{ \dalpha}{\la \alpha\ra}=c_2 |D|^{-\frac{1}{9}}(u^{\frac{10}{9}})(x).\end{align*}
Consequently we deduce, 
\begin{multline*}
\L{f_x(x)} \int \la \Gamma_\alpha(x)\ra \dalpha\\
\lesssim\left( |D|^{-\frac{1}{9}}(u^{\frac{10}{9}})(x)\right)^{\frac{9}{10}}
\left(\iint\Phi_\alpha(x,z)^{10} \kappa\left(\frac{1}{|z|}\right)^{10}|\alpha|^{\frac{17}{2}}
\frac{\dz \dalpha}{|z|^{\frac{3}{2}}}\right)^{\frac{1}{10}}.
\end{multline*}
By Cauchy-Schwarz, Minkowski and Hardy-Littlewood-Sobolev,    
\begin{align*}
&\lA \L{f_x}\int |\Gamma_\alpha|\dalpha\rA_{L^2}\\
&\qquad\lesssim
\lA\left( |D|^{-\frac{1}{9}}(u^{\frac{10}{9}})(x)\right)^{\frac{9}{10}}\rA_{L^{\frac{5}{2}}}
\lA\left(\iint\Phi_\alpha(\cdot,z)^{10} \kappa\left(\frac{1}{|z|}\right)^{10}|\alpha|^{\frac{17}{2}}
\frac{\dz \dalpha}{|z|^{\frac{3}{2}}}\right)^{\frac{1}{10}}\rA_{L^{10}}\\
&\qquad= \brA \D^{-\frac{1}{9}}(u^{\frac{10}{9}})\brA_{L^{\frac{9}{4}}}^{\frac{9}{10}}
\left(\iint\Vert\Phi_\alpha(\cdot,z)\Vert_{L^{10}}^{10}
\kappa\left(\frac{1}{|z|}\right)^{10}|\alpha|^{\frac{17}{2}}\frac{  \dalpha\dz}{|z|^{\frac{3}{2}}}\right)^{\frac{1}{10}}\\
&\qquad\lesssim\Vert u\Vert_{L^2}
\left(\iint\Vert\Phi_\alpha(\cdot,z)\Vert_{L^{10}}^{10}
\kappa\left(\frac{1}{|z|}\right)^{10}|\alpha|^{\frac{17}{2}}\frac{  \dalpha\dz}{|z|^{\frac{3}{2}}}\right)^{\frac{1}{10}}.
\end{align*}
We now estimate the $L^{10}$-norm of $\Phi_\alpha(\cdot,z)$. 
Note that 
$$
|\delta_{z}\Delta_{\alpha} f(x)|+|f_x(x)-\Delta_{\alpha} f(x)|\lesssim J_\alpha(x,z).
$$ 
So, 
\begin{align*}
\Phi_\alpha(x,z)&\lesssim J_\alpha(x,z)^4|\delta_{z}\Delta_\alpha g_x(x)|\\
&\lesssim |\alpha|^{-1} |\delta_z\delta_\alpha g_x(x)|\bigg(1+ |\fint_{0}^{\alpha} |\delta_{\tau}f_x(x)|^{10}\dtau|^{\frac{4}{10}}\\
&\phantom{\lesssim |\alpha|^{-1} |\delta_z\delta_\alpha g_x(x)|\bigg(1\,}+\left|\fint_{0}^{\alpha}\fint_{0}^{z} |\delta_{z_1}f_x(x-\tau)|^{10}\dz_1\dtau\right|^{\frac{4}{10}}\bigg).
\end{align*}
Then, 
\begin{align*}
&|\alpha|^{10}\Vert\Phi_\alpha(\cdot,z)\Vert_{L^{10}}^{10}\\
&\lesssim
\Vert \delta_z\delta_\alpha g_x\Vert_{L^{10}}^{10}+ \Vert \delta_z\delta_\alpha g_x\Vert_{L^{20}}^{10}
\left(\bigg\vert\fint_{0}^{z} \Vert \delta_{z_1} f_x\Vert^{40}_{L^{80}}\dz_1\bigg\vert
+\bigg\vert\fint_{0}^{\alpha} \Vert \delta_{\tau}f_x\Vert_{L^{80}}^{40}\dtau\bigg\vert\right) 
\\
&~\lesssim 
\Vert \delta_z\delta_\alpha g_x\Vert_{\dot H^{\frac{2}{5}}}^{10}+
\Vert \delta_z\delta_\alpha g_x\Vert_{\dot H^{\frac{9}{20}}}^{10}
\left(\bigg\vert\fint_{0}^{z} \Vert \delta_{z_1}f_x\Vert^{40}_{\dot H^{\frac{39}{80}}}\dz_1 \bigg\vert
+\bigg\vert\fint_{0}^{\alpha} \Vert \delta_{\tau}f_x\Vert_{\dot H^{\frac{39}{80}}}^{40}\dtau\bigg\vert\right).
\end{align*}
Thus, 
\begin{align*}
&\iint\Vert\Phi_\alpha(\cdot,z)\Vert_{L^{10}}^{10} \kappa\left(\frac{1}{|z|}\right)^{10}|\alpha|^{\frac{17}{2}}
\frac{\dalpha\dz}{|z|^{\frac{3}{2}}}\\
&\qquad\lesssim
\int\left[ \int  \Vert \delta_\alpha\delta_z g_x(\cdot)\Vert_{\dot H^{\frac{2}{5}}}^{10}
\kappa\left(\frac{1}{|z|}\right)^{10}\frac{ \dz}{|z|^{\frac{3}{2}}} \right]\frac{  \dalpha }{|\alpha|^{\frac{3}{2}}}\\
&\qquad\quad+ \int \left[\int   \Vert \delta_\alpha\delta_z g_x(\cdot)\Vert_{\dot H^{\frac{9}{20}}}^{10}
\frac{\dalpha }{|\alpha|^{\frac{3}{2}}}\right]
\bigg\vert\fint_{0}^{z} \Vert\delta_{z_1} f_x(\cdot)\Vert^{40}_{\dot H^{\frac{39}{80}}}\dz_1  \bigg\vert
\kappa\left(\frac{1}{|z|}\right)^{10}\frac{  \dz}{|z|^{\frac{3}{2}}}
\\
&\qquad\quad+
\int \left[ \int \Vert\delta_\alpha\delta_z g_x(\cdot)\Vert_{\dot H^{\frac{9}{20}}}^{10}
\kappa\left(\frac{1}{|z|}\right)^{10}\frac{ \dz}{|z|^{\frac{3}{2}}}\right]
 \bigg\vert\fint_{0}^{\alpha} \Vert \delta_{\tau}f_x(\cdot)\Vert_{\dot H^{\frac{39}{80}}}^{40}\dtau  \bigg\vert
 \frac{\dalpha }{|\alpha|^{\frac{3}{2}}}\cdot
\end{align*}
Proceeding as in the proof of  Lemma~\ref{L:2.7}, we get
 \begin{align*}
\int  \Vert \delta_\alpha \delta_z g_x\Vert_{\dot H^{\frac{2}{5}}}^{10}
\kappa\left(\frac{1}{|z|}\right)^{10}\frac{ \dz}{|z|^{\frac{3}{2}}}
&\lesssim  \left(\int |\xi|^{\frac{29}{10}}  \kappa\left(\frac{1}{|z|}\right)^{2}|\mathcal{F}(\delta_\alpha g)(\xi)|^2\dxi\right)^{5}\\
&\les
\blA\delta_\alpha 
\big(\D^{\frac{29}{20},\phi}g\big)\brA_{L^2}^{10},\\
\int \Vert \delta_\alpha \delta_z g_x(\cdot)\Vert_{\dot H^{\frac{9}{20}}}^{10}
\kappa\left(\frac{1}{|z|}\right)^{10}\frac{ \dz}{|z|^{\frac{3}{2}}}
&\lesssim \left(\int |\xi|^{3}  \kappa\left(\frac{1}{|z|}\right)^{2}
|\mathcal{F}(\delta_\alpha g)(\xi)|^2\dxi\right)^{5}\\
&\lesssim \brA \D^{\frac{3}{2},\phi}g\brA_{L^2}^{10},
\end{align*}
and
\begin{align*}
\int  \Vert \delta_{z}\delta_\alpha g_x(\cdot)\Vert_{\dot H^{\frac{9}{20}}}^{10}
\frac{  \dalpha }{|\alpha|^{\frac{3}{2}}}
&\lesssim \left(\int |\xi|^{3}|\min\{\la z\ra \la \xi\ra,1\}^2|\mathcal{F}g(\xi)|^2\dxi\right)^{5}\\&\lesssim \blA\D^{\frac{3}{2},\phi} g\brA_{L^2}^{10} \kappa\(\frac{1}{|z|}\)^{-10}.
 \end{align*}
Here we have used  the fact that $\min\{\la z\ra \la \xi\ra,1\}^2\lesssim \phi(|\xi|)^2 \kappa\Big(\frac{1}{|z|}\Big)^{-2}$ in the last inequality. 

Therefore, by applying~Lemma~\ref{L:2.7} together with Hardy and Minkowski inequalities, we obtain
 \begin{align*}
 &\iint\Vert\Phi_\alpha(\cdot,z)\Vert_{L^{10}}^{10}
 \kappa\left(\frac{1}{|z|}\right)^{10}|\alpha|^{\frac{17}{2}}\frac{  \dalpha\dz}{|z|^{\frac{3}{2}}}\\
 &\qquad\lesssim   \int\Vert\delta_\alpha \left(|D|^{\frac{29}{20},\phi}g\right)\Vert_{L^2}^{10}
 \frac{  \dalpha }{|\alpha|^{\frac{3}{2}}}
 +\blA\D^{\frac{3}{2},\phi} g\brA_{L^2}^{10}\int  \Vert \delta_{z}f_x(\cdot)\Vert^{40}_{\dot H^{\frac{39}{80}}}
 \frac{  \dz}{|z|^{\frac{3}{2}}}
 \\
 &\qquad\quad+
\brA \D^{\frac{3}{2},\phi}g\brA_{L^2}^{10}
\int \Vert\delta_\alpha f_x(\cdot)\Vert_{\dot H^{\frac{39}{80}}}^{40}
\frac{  \dalpha }{|\alpha|^{\frac{3}{2}}}\\
&\qquad\lesssim \brA \D^{\frac{3}{2},\phi}g\brA_{L^2}^{10}
+\blA |D|^{\frac{3}{2},\phi}g\brA_{L^2}^{10}   \lA f\rA_{\dot H^{\frac{3}{2}}}^{40}.
\end{align*}
The proof of~\e{w} is now in reach. Indeed, altogether, the previous estimates imply that
$$
\iint\Vert\Phi_\alpha(\cdot,z)\Vert_{L^{10}}^{10} \kappa\left(\frac{1}{|z|}\right)^{10}|\alpha|^{\frac{17}{2}}\frac{  \dalpha\dz}{|z|^{\frac{3}{2}}}
\lesssim \brA \D^{\frac{3}{2},\phi}g\brA_{L^2}^{10}\left(1 +\blA\D^{\frac{3}{2},\phi} f\brA_{L^2}^{40}\right).$$
Therefore,
\begin{align*}
\lA \L{f_x}\int |\Gamma_\alpha|\dalpha\rA_{L^2}\lesssim 
\Vert u\Vert_{L^2}\brA \D^{\frac{3}{2},\phi}g\brA_{L^2}
\left( 1+\blA\D^{\frac{3}{2},\phi} f\brA_{L^2}^{4}\right)
\end{align*}
equivalent to the wanted result~\e{w}. 

This completes the proof.
\end{proof}

\section{Proof of the main theorem}\label{S:6}

\subsection{The main estimate}
By combining Corollary~\ref{C:5.2} with Propositions~\ref{X1},~\ref{P:5.6} and~\ref{P:5.7}, we obtain our main 
weighted Sobolev energy estimate. 

\begin{proposition}\label{P:6.1}
Consider a function $\phi$ defined by~\e{n10}, for some admissible weight~$\kappa$. Then, the following property holds: 
For all $\eps\in (0,1]$, for all smooth solution $f$ of the approximate 
Muskat equation~\e{main:acp}, and for all positive function $\lambda=\lambda(t)$,
\begin{align*}
&\fract \blA \D^{\tdm,\phi}f\brA_{L^2}^2
+ \int_\xR \frac{\bla \D^{2,\phi}f\bra^2}{1+(\partial_x f)^2} \dx+\eps\lA f\rA_{\dot{H}^4}^2\\
&\qquad\lesssim
 \kappa\Big(\frac{1}{\lam}\Big)^{-1}\left(1+\blA\D^{\tdm,\phi}f\brA_{L^2}^{7}\right)
\blA\D^{\tdm,\phi}f\brA_{L^2} \int_\xR \frac{\bla \D^{2,\phi}f\bra^2}{1+(\partial_x f)^2} \dx
\\
&\qquad\quad+
\left( 1+\blA\D^{\frac{3}{2},\phi} f\brA_{L^2}^{5}\right)\brA\D^{\frac{3}{2},\phi}f\brA_{L^2} \lA\frac{f_{xx}}{\L{f_x}}\rA_{L^2}\lA\frac{|D|^{2,\phi}f}{\L{f_x}}\rA_{L^2} \\
&\qquad\quad+\frac{1}{\sqrt{\lam}}\left(1+\blA\D^{\frac{3}{2},\phi}f\brA_{L^2}\right) \blA\D^{\frac{3}{2},\phi}f\brA_{L^2}^2\blA\D^{2,\phi}f\brA_{L^2}.
\end{align*}
\end{proposition}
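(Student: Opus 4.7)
The plan is to take the identity provided by Corollary~\ref{C:5.2}, namely
\begin{equation*}
\frac{1}{2}\fract\blA\D^{\tdm,\phi}f\brA_{L^2}^2+\int_\xR\frac{\bla\D^{2,\phi}f\bra^2}{1+f_x^2}\dx+\eps\lA f\rA_{\dot H^4}^2=(I)+(II)+(III),
\end{equation*}
and to bound $|(I)|$, $|(II)|$, $|(III)|$ using Propositions~\ref{X1}, \ref{P:5.6}, and \ref{P:5.7} respectively. Each application produces two contributions, and the six resulting pieces will be distributed among the three summands of the desired bound.

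For $(I)=-\tfrac12\langle[\mathcal{H},W(f)]\D^{2,\phi}f,\D^{2,\phi}f\rangle$, I would apply Proposition~\ref{X1} with $h\defn\D^{2,\phi}f$ and with $g$ defined so that $g_x=\D^{2,\phi}f$; concretely $g=-\mathcal{H}\D^{1,\phi}f$, which works because $\partial_x=-\mathcal{H}\D$. Then $\lA g\rA_{\dot H^{\mez}}=\blA\D^{\tdm,\phi}f\brA_{L^2}$, $\lA g_x/\L{f_x}\rA_{L^2}=\lA\D^{2,\phi}f/\L{f_x}\rA_{L^2}$, and $\lA h\rA_{L^2}=\blA\D^{2,\phi}f\brA_{L^2}$. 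Invoking the Remark after Proposition~\ref{X1} (which converts $\lA f\rA_{\dot H^{\tdm}_\lambda}$ into $\kappa(1/\lambda)^{-1}\blA\D^{\tdm,\phi}f\brA_{L^2}$ via~\eqref{est:kappa3}), the first term of~\eqref{h11} contributes to the third summand of the target inequality while the second term contributes to the first summand.

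For $(II)=\langle R(f)(\D^{1,\phi}f),\D^{2,\phi}f\rangle$, I would apply Proposition~\ref{P:5.6} with $g\defn\D^{1,\phi}f$ and $h\defn\D^{2,\phi}f$; once again $\lA g\rA_{\dot H^{\mez}}=\blA\D^{\tdm,\phi}f\brA_{L^2}$ and $\lA g_x/\L{f_x}\rA_{L^2}=\lA\D^{2,\phi}f/\L{f_x}\rA_{L^2}$. The two resulting contributions slot into the first and third summands, the factor $(1+\lA f\rA_{\dot H^{\tdm}}^2)$ being absorbed by $(1+\blA\D^{\tdm,\phi}f\brA_{L^2}^7)$ since $\lA f\rA_{\dot H^{\tdm}}\le\blA\D^{\tdm,\phi}f\brA_{L^2}$. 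Finally, for $(III)=\langle[\D^{1,\phi},\mathcal{T}(f)]f,\D^{2,\phi}f\rangle$, Proposition~\ref{P:5.7} applies directly with $g\defn f$ and $h\defn\D^{2,\phi}f$; the two symmetric terms of~\eqref{n18} combine to yield precisely the second summand.

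I do not anticipate a genuine analytical obstacle: the difficult commutator and remainder estimates have all been discharged in the three preparatory propositions, and what remains is essentially bookkeeping. The only technical glue is the routine inequality $\lA f\rA_{\dot H^{\tdm}_\lambda}\lesssim\kappa(1/\lambda)^{-1}\blA\D^{\tdm,\phi}f\brA_{L^2}$, which follows from~\eqref{est:kappa3} applied with $\beta=1/200$ (squaring to pick up a factor $\kappa(|\xi|)^2/\kappa(1/\lambda)^2$), together with the equivalence $\phi\sim\kappa$ from Proposition~\ref{Z9} and the definition~\eqref{defi:Hlam} of the $\dot H^{\tdm}_\lambda$-norm. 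Collecting the six pieces according to the scheme above and invoking these elementary conversions produces the stated estimate.
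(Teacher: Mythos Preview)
Your overall strategy is correct and matches the paper's approach: combine Corollary~\ref{C:5.2} with Propositions~\ref{X1}, \ref{P:5.6}, \ref{P:5.7} and distribute the resulting terms. However, there is a genuine gap in your treatment of~$(II)$.

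You write that with $g=\D^{1,\phi}f$ one has $\lA g_x/\L{f_x}\rA_{L^2}=\lA\D^{2,\phi}f/\L{f_x}\rA_{L^2}$. This is false: since $\partial_x=-\mathcal{H}\D$, one has $g_x=\partial_x\D^{1,\phi}f=-\mathcal{H}\D^{2,\phi}f$, so what actually appears is
\[
\lVert g_x/\L{f_x}\rVert_{L^2}=\lVert \mathcal{H}(\D^{2,\phi}f)/\L{f_x}\rVert_{L^2}.
\]
The Hilbert transform is an isometry on $L^2$, but it does \emph{not} commute with multiplication by the weight $1/\L{f_x}$, and there is no reason for these two weighted norms to coincide. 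This is precisely the obstruction that Remark~\ref{R:3.4} warns about.

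The missing ingredient is Lemma~\ref{L:Hilbert2} (applied with $\gamma(z)=\L{z}^{-1}$ and $h=f_x$), which gives
\[
\lVert \mathcal{H}(\D^{2,\phi}f)/\L{f_x}\rVert_{L^2}
\lesssim\bigl(1+\lVert f\rVert_{\dot H^{3/2}}\bigr)\,\lVert \D^{2,\phi}f/\L{f_x}\rVert_{L^2}.
\]
The extra factor $(1+\lVert f\rVert_{\dot H^{3/2}})$ is harmless, being absorbed into $(1+\blA\D^{\tdm,\phi}f\brA_{L^2}^{7})$. The paper's own proof makes this step explicit: after assembling the three propositions one obtains an intermediate bound containing $\lVert\mathcal{H}(\D^{2,\phi}f)/\L{f_x}\rVert_{L^2}$, and then invokes Lemma~\ref{L:Hilbert2} to finish. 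Once you add this step, your argument is complete and identical to the paper's.
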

\begin{proof} It follows from Corollary~\ref{C:5.2} with Propositions~\ref{X1},~\ref{P:5.6} and~\ref{P:5.7} that 
\begin{align*}
&\fract \blA \D^{\tdm,\phi}f\brA_{L^2}^2
+ \int_\xR \frac{\bla \D^{2,\phi}f\bra^2}{1+(\partial_x f)^2} \dx+\eps\lA f\rA_{\dot{H}^4}^2\\
&\qquad\lesssim
 \kappa\Big(\frac{1}{\lam}\Big)^{-1}\left(1+\blA\D^{\tdm,\phi}f\brA_{L^2}^{7}\right)
\blA\D^{\tdm,\phi}f\brA_{L^2} \int_\xR \frac{\bla \D^{2,\phi}f\bra^2}{1+(\partial_x f)^2} \dx\\
&\qquad\quad+\kappa\Big(\frac{1}{\lam}\Big)^{-1}\left(1+\blA\D^{\tdm,\phi}f\brA_{L^2}^2\right)
\blA\D^{\frac{3}{2},\phi}f\brA_{L^2} \lA\frac{|D|^{2,\phi}f}{\L{f_x}}\rA_{L^2} 
\bigg\Vert\frac{\mathcal{H}(\D^{2,\phi}f)}{\L{f_x}}\bigg\Vert_{L^2}
\\
&\qquad\quad+
\left( 1+\blA\D^{\frac{3}{2},\phi} f\brA_{L^2}^{5}\right)\brA\D^{\frac{3}{2},\phi}f\brA_{L^2} \lA\frac{f_{xx}}{\L{f_x}}\rA_{L^2}\lA\frac{|D|^{2,\phi}f}{\L{f_x}}\rA_{L^2} \\
&\qquad\quad+\frac{1}{\sqrt{\lam}}\left(1+\blA\D^{\frac{3}{2},\phi}f\brA_{L^2}\right) \blA\D^{\frac{3}{2},\phi}f\brA_{L^2}^2\blA\D^{2,\phi}f\brA_{L^2}.
\end{align*}
Combining this with  Lemma~\ref{L:Hilbert2}, which implies that
$$
\int \frac{(\mathcal{H}g)^2}{1+h^2}\dx \lesssim\(1+\Vert h\Vert_{\dot H^{\frac{1}{2}}}^2\)
\int  \frac{g^2}{1+h^2}\dx,
$$
one obtains the wanted result. 
\end{proof}

\subsection{Uniform global in time estimates under a smallness assumption}
Set $\phi=1$ and let $\lambda$ goes to $+\infty$ in the main estimate given by Proposition~\ref{P:6.1}, to obtain
\begin{align*}
&\fract \blA f\brA_{\dot H^{\frac{3}{2}}}^2
+ \int_\xR \frac{(\partial_{xx}f)^2}{1+(\partial_xf)^2} \dx+\eps\lA f\rA_{\dot{H}^4}^2\lesssim  \left(1+\Vert f\Vert_{\dot H^{\frac{3}{2}}}^{7}\right)
\Vert f\Vert_{\dot H^{\frac{3}{2}}} \int_\xR \frac{(\partial_{xx}f)^2}{1+(\partial_xf)^2} \dx.
\end{align*}
Then it is very easy to prove a uniform estimate under a smallness 
assumption on the initial data (see for instance \cite[\S$3.3$]{AN1} or \cite[\S$2.5$]{AN2}). 
We obtain the following
\begin{proposition}\label{C:6.2}
There exists a positive constant $c$ such that the following 
property holds. 
For all initial data $f_0$ in $H^{\tdm}(\xR)$ satisfying 
\be\label{nUG0}
\lA f_0\rA_{\dot{H}^\tdm}   \leq  c,
\ee
and for all $\eps\in (0,1]$,  the solution 
$f$ to the approximate Cauchy problem~\e{main:acp} satisfies
\begin{equation}\label{nUG1}
\sup_{t\in [0,+\infty)}\lA f(t)\rA_{\dot{H}^\tdm}^2 +\mez
\int_0^{+\infty}\int_\xR \frac{(\partial_{xx}f)^2}{1+(\partial_xf)^2} \dx\dt 
 \leq 2 \lA f_0\rA_{\dot{H}^\tdm}^2.
\end{equation}
\end{proposition}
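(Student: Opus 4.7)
The plan is a standard bootstrap argument based on the reduced energy inequality displayed in the paragraph preceding the proposition. Writing $C_0$ for its implicit constant, we have
\begin{equation*}
\frac{d}{dt}\|f\|_{\dot{H}^{3/2}}^{2}+\int_{\mathbb{R}}\frac{(\partial_{xx}f)^{2}}{1+(\partial_{x}f)^{2}}\,dx+\varepsilon\|f\|_{\dot{H}^{4}}^{2}\leq C_{0}\bigl(1+\|f\|_{\dot{H}^{3/2}}^{7}\bigr)\|f\|_{\dot{H}^{3/2}}\int_{\mathbb{R}}\frac{(\partial_{xx}f)^{2}}{1+(\partial_{x}f)^{2}}\,dx.
\end{equation*}
The existence of a global smooth solution of the regularized problem for each fixed $\varepsilon\in(0,1]$ is already granted by Proposition~\ref{P:3.6}, so everything reduces to propagating a uniform a priori bound in $\dot H^{3/2}$.

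Fix $c>0$ so small that $C_{0}(1+(\sqrt{2}\,c)^{7})(\sqrt{2}\,c)\leq\tfrac{1}{2}$ and assume $\|f_{0}\|_{\dot H^{3/2}}\leq c$. Define the bootstrap time
\begin{equation*}
T^{*}=\sup\Bigl\{\,T\geq 0\;:\;\sup_{t\in[0,T]}\|f(t)\|_{\dot{H}^{3/2}}^{2}\leq 2\|f_{0}\|_{\dot{H}^{3/2}}^{2}\Bigr\}.
\end{equation*}
Since $f\in C^{0}([0,\infty);H^{3/2})$ by Proposition~\ref{P:3.6}, the map $t\mapsto\|f(t)\|_{\dot H^{3/2}}$ is continuous, hence $T^{*}>0$. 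On $[0,T^{*}]$, the bootstrap hypothesis yields $\|f(t)\|_{\dot{H}^{3/2}}\leq\sqrt{2}\,c$, so the prefactor $C_{0}(1+\|f\|_{\dot H^{3/2}}^{7})\|f\|_{\dot H^{3/2}}$ is at most $\tfrac{1}{2}$ and absorption by the dissipative term on the left gives
\begin{equation*}
\frac{d}{dt}\|f\|_{\dot{H}^{3/2}}^{2}+\frac{1}{2}\int_{\mathbb{R}}\frac{(\partial_{xx}f)^{2}}{1+(\partial_{x}f)^{2}}\,dx\leq 0\qquad\text{on }[0,T^{*}].
\end{equation*}

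Integrating this in time yields, for all $t\in[0,T^{*}]$,
\begin{equation*}
\|f(t)\|_{\dot{H}^{3/2}}^{2}+\frac{1}{2}\int_{0}^{t}\!\!\int_{\mathbb{R}}\frac{(\partial_{xx}f)^{2}}{1+(\partial_{x}f)^{2}}\,dx\,ds\leq\|f_{0}\|_{\dot{H}^{3/2}}^{2},
\end{equation*}
which is strictly stronger than the bootstrap inequality $\|f(t)\|_{\dot H^{3/2}}^{2}\leq 2\|f_{0}\|_{\dot H^{3/2}}^{2}$. A standard continuation argument using the continuity of $t\mapsto\|f(t)\|_{\dot H^{3/2}}$ therefore forces $T^{*}=+\infty$, and letting $t\to+\infty$ gives the desired bound~\eqref{nUG1}, with a constant independent of $\varepsilon$. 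The entire difficulty of the result is upstream, concentrated in Proposition~\ref{P:6.1}; once that weighted energy inequality (with the null-type structure already factored in) is available, the argument here is a textbook absorption-plus-continuation scheme and presents no further analytical obstacle.
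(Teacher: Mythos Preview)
Your proof is correct and is precisely the approach the paper has in mind: the paper does not spell out the details but simply notes that the reduced inequality obtained from Proposition~\ref{P:6.1} (with $\phi\equiv 1$ and $\lambda\to+\infty$) makes a uniform bound ``very easy'' via absorption, referring to \cite{AN1,AN2} for the routine bootstrap. Your write-up carries this out cleanly; the only trivial edge case to mention would be $\lA f_0\rA_{\dot H^{3/2}}=0$, which forces $f_0=0$ (since $f_0\in L^2$) and hence $f\equiv 0$ by uniqueness.
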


\subsection{Uniform local in time estimates for arbitrary initial data} 
This is the most delicate step which requires to use the full strength of Proposition~\ref{P:6.1}. 
We will prove the following

\begin{proposition}\label{P:6.3}
Consider a function $\phi$ defined by~\e{n10}, for some admissible weight~$\kappa$ which in addition satisfies
$$
\lim_{r\to+\infty}\kappa(r)=+\infty. 
$$
Then for any $M_0>0$ there exists $T_0>0$ depending on $M_0,\phi$ such that the following properties holds. 
For all $\eps\in (0,1]$ and for all smooth solution $f$ of the approximate 
Muskat equation~\e{main:acp}, if
$$
\lA f_0\rA_{L^2}^2+\blA \D^{\tdm,\phi}f_0\brA_{L^2}^2\le M_0,
$$
then,
$$
\sup_{t\in [0,T_0]}\bigg\{\lA f(t)\rA_{L^2}^2+\blA \D^{\tdm,\phi}f(t)\brA_{L^2}^2\bigg\}
+\int_0^{T_0}\int_\xR \frac{\bla \D^{2,\phi}f(t,x)\bra^2}{1+f_x(t,x)^2} \dx\dt
\le 4M_0.
$$
\end{proposition}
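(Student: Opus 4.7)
Set
$$A(t)\defn\lA f(t)\rA_{L^2}^2+\blA \D^{\tdm,\phi}f(t)\brA_{L^2}^2,\qquad B(t)\defn\int_{\xR}\frac{\bla\D^{2,\phi}f(t)\bra^2}{1+f_x(t)^2}\,\dx.$$
By \e{nac:1}, $\lA f\rA_{L^2}^2$ is non-increasing in time. Applying Proposition~\ref{P:6.1} with a positive function $\lam=\lam(t)$ to be chosen, and adding the $L^2$ bound, I obtain uniformly in $\eps\in(0,1]$
\begin{align*}
\fract A + B &\les \frac{(1+A^{7/2})A^{\mez}}{\kappa(1/\lam)}\,B
+ (1+A^{5/2})A^{\mez}\lA \tfrac{f_{xx}}{\L{f_x}}\rA_{L^2} B^{\mez}
+ \lam^{-\mez}(1+A^{\mez})A\,\blA \D^{2,\phi}f\brA_{L^2}.
\end{align*}

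\textbf{Core step: deriving \e{e:A-B}.} The heart of the argument is to upgrade this into the closed estimate \e{e:A-B}. My plan is: first select the time-dependent threshold $\lam(t)=B(t)^{-\mez}$, so that the first right-hand-side term becomes $(1+A^4)B/\kappa(B^{\mez})$, which already matches one of the contributions in \e{e:A-B}. For the remaining two terms I would dyadically split the frequency domain at scale $B^{-\mez}$ and combine (i) Proposition~\ref{P:commD} to commute $\D^{s,\phi}$ through the weight $1/\L{f_x}$, (ii) Lemma~\ref{L:Hilbert2} to pass from unweighted to $\L{f_x}^{-1}$-weighted $L^2$ bounds on Hilbert-transform-type expressions, and (iii) the pointwise weight inequality \e{est:kappa3} relating $\kappa$ at different scales. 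This should yield a gain $1/\kappa(B^{1/50})$ in the estimate of $\lA f_{xx}/\L{f_x}\rA_{L^2}$ and a logarithmic factor $\log(4+B)^{\mez}$ in the estimate of $\blA\D^{2,\phi}f\brA_{L^2}$. With $\lam^{-\mez}=B^{\uq}$, the second term then contributes $(1+A^4)B/\kappa(B^{1/50})$ and the third contributes $(1+A^4)B^{\tq}\log(4+B)^{\mez}$, producing \e{e:A-B}. This is the principal obstacle of the whole proof: every interpolation and commutator manipulation must remain compatible with the weight $\phi$, and extracting the gain $1/\kappa(B^{1/50})$ on the $\lA f_{xx}/\L{f_x}\rA_{L^2}$ term is precisely where the null-type structure of Proposition~\ref{P:5.7} becomes decisive.

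\textbf{Closing the bootstrap.} Once \e{e:A-B} is in hand, set $T_\star\defn\sup\{T\ge 0:\ A(t)\le 4M_0\text{ on }[0,T]\}$. On $[0,T_\star]$ one has $(1+A^4)\le K\defn 1+(4M_0)^4$. Young's inequality gives $C_0 K\log(4+B)^{\mez}B^{\tq}\le \tfrac{1}{8}B + C_1(M_0)$. Since $\lim_{r\to\infty}\kappa(r)=+\infty$, I can choose $B_0=B_0(M_0,\kappa)\ge 1$ large enough that $\kappa(B_0^{1/50})\ge 8C_0 K$; then both $KB/\kappa(B^{1/50})$ and $KB/\kappa(B^{\mez})$ are $\le\tfrac{1}{8}B$ whenever $B\ge B_0$, and are bounded by $KB_0$ otherwise. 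Plugging these bounds into \e{e:A-B} yields
$$\fract A + \tfrac{1}{2}B \le \mathcal{G}_1(M_0,\kappa)$$
for some constant $\mathcal{G}_1$. Integrating, $A(t)\le A(0)+t\mathcal{G}_1\le M_0+t\mathcal{G}_1$, so any choice $T_0=T_0(M_0,\kappa)>0$ with $T_0\mathcal{G}_1\le M_0$ forces $A(t)\le 2M_0<4M_0$ on $[0,T_0]$, extending $T_\star$ past $T_0$ and closing the bootstrap. A further integration yields $\int_0^{T_0}B\,\dt\le 2(A(0)+T_0\mathcal{G}_1)\le 4M_0$ after a possible additional shrinking of $T_0$. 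All estimates are uniform in $\eps$ since the parabolic contribution $\eps\lA f\rA_{\dot{H}^4}^2$ always sits on the good side.
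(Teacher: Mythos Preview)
Your approach matches the paper's: same choice $\lambda=B^{-1/2}$, same target inequality~\eqref{e:A-B}, and the bootstrap at the end is carried out correctly.

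Two points where your sketch diverges from what actually happens:

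\emph{The $\kappa(B^{1/50})^{-1}$ gain.} You correctly flag the estimate of $\lA f_{xx}/\L{f_x}\rA_{L^2}$ as the principal obstacle but only describe a plan. The paper's argument is more specific than your outline suggests: one applies Proposition~\ref{P:commD} \emph{twice}, first with $\phi\equiv 1$ to get $\lA f_{xx}/\L{f_x}\rA_{L^2}\lesssim(1+\lA f\rA_{\dot H^{3/2}})X$ where $X=\blA\D^{1/4}\big(\D^{7/4}f/\L{f_x}\big)\brA_{L^2}$, then with the given $\phi$ to bound the weighted analogue $X_\phi=\blA\D^{1/4,\phi}\big(\D^{7/4}f/\L{f_x}\big)\brA_{L^2}$ by $B^{1/2}+\blA\D^{3/2,\phi}f\brA_{L^2}X$. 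One then interpolates $X$ against $X_\phi$ at scale $X_\phi^{1/25}$ to extract the factor $1/\kappa(X_\phi^{1/25})$, and uses $\kappa\to\infty$ to close the resulting implicit inequality for $X_\phi$. This produces~\eqref{v200}, which also carries an additive $\mathcal{G}(A)$ term (your summary of the ``second term'' contribution omits this, though you recover it by citing~\eqref{e:A-B} directly).

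\emph{Tool attribution.} The null-type structure of Proposition~\ref{P:5.7} is not what delivers the $\kappa$-gain here --- that proposition is already absorbed into Proposition~\ref{P:6.1}, which you take as input. The gain comes entirely from Proposition~\ref{P:commD} and the interpolation just described. Likewise, Lemma~\ref{L:Hilbert2} plays no role at this stage (it was used inside the proof of Proposition~\ref{P:6.1}). The logarithmic bound on $\blA\D^{2,\phi}f\brA_{L^2}$ follows from the elementary interpolation $\lA f_x\rA_{L^\infty}\lesssim(1+A)\log(2+B)^{1/2}$, not from any frequency splitting.
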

\begin{proof}
We want to estimate the following quantities:
\begin{equation*}
A(t)=\lA f(t)\rA_{L^2}^2+\blA \D^{\tdm,\phi}f(t)\brA_{L^2}^2,\qquad B(t)=\int_\xR \frac{\bla \D^{2,\phi}f(t,x)\bra^2}{1+(\partial_xf(t,x))^2} \dx.
\end{equation*}
As already seen in the proof of Proposition~\ref{P:3.6}, the $L^2$-norm is decreasing:
\be\label{v12}
\lA f(t)\rA_{L^2}^2\le \lA f_0\rA_{L^2}^2.
\ee
On the other hand, notice that we have the obvious bound 
$\lA f\rA_{\dot{H}^\tdm}\le \sqrt{A}$. Hence, the main estimate, 
as given by Proposition~\ref{P:6.1}, implies that
\begin{multline}\label{v14}
\fract A
+ B\\
\lesssim
\kappa\Big(\frac{1}{\lam}\Big)^{-1}\left(1+A\right)^4 B+
\left(1+A\right)^3 B^{\frac{1}{2}} \lA\frac{f_{xx}}{\L{f_x}}\rA_{L^2} +\frac{1}{\sqrt{\lam}}\left(1+A\right)^{\frac{3}{2}} \blA\D^{2,\phi}f\brA_{L^2}.
\end{multline}
Our next task presents itself: we must estimate 
\be\label{v13}
\blA\D^{2,\phi}f\brA_{L^2}, \quad 
\lA\frac{f_{xx}}{\L{f_x}}\rA_{L^2}
\ee
in terms of $A$ and $B$. 

To estimate the three other terms in~\e{v13}, we shall use 
some interpolation inequalities adapted to our problem. 
We begin with the following

\begin{lemma}The following inequalities hold:
\begin{align}
&\lA f_x\rA_{L^\infty} \lesssim \big( 1+A\big)
\log\big(2+B\big)^{\frac{1}{2}},\label{v10}\\
&\blA\D^{2,\phi}f\brA_{L^2}\lesssim \big( 1+A\big)
\log\big(2+B\big)^{\frac{1}{2}}\label{v98}
B^{\frac{1}{2}}.
\end{align}
\end{lemma}
\begin{proof}
Let us prove \e{v10}. Recall the classical interpolation inequality 
\be\label{v1}
\lA f_x\rA_{L^\infty}\lesssim 1+ \lA f\rA_{L^2}+\log\big(2+ \lA f\rA_{\dot H^{2}}\big)^{\frac{1}{2}} 
\lA f\rA_{\dot H^{\frac{3}{2}}}.
\ee
For the sake of completeness, recall that \e{v1} is proved by writing 
\begin{align*}
\lA f_x\rA_{L^\infty}&\leq \int \la \xi\ra \bla\hat f(\xi)\bra\dxi \\
&\lesssim\lA f\rA_{L^2}+\int_{1\leq |\xi|\leq \lambda_0 }\la \xi\ra \bla\hat f(\xi)\bra\dxi
+\int_{|\xi|\geq \lambda_0}\la \xi\ra \bla\hat f(\xi)\bra\dxi\\
&\lesssim\lA f\rA_{L^2}
+(\log \lambda_0)^{\frac{1}{2}} \lA f\rA_{\dot H^{\frac{3}{2}}}+|\lambda_0|^{-\frac{1}{2}} \lA f\rA_{\dot H^{2}},
\end{align*}
and then choosing $\lambda_0=(2+ \lA f\rA_{\dot H^{2}})^2$. 

On the other hand, directly from the definition of 
$B=\int_\xR \frac{\bla \D^{2,\phi}f(x)\bra^2}{1+f_x(x)^2} \dx$, we have
\begin{equation}\label{v2}
\lA f\rA_{\dot H^2}\les \blA\D^{2,\phi}f\brA_{L^2}\leq (1+\lA f_x\rA_{L^\infty})B^{\mez}
\le 1+\lA f_x\rA_{L^\infty}^2+B.
\end{equation}
By combining~\e{v1} and \e{v2}, we obtain
 \begin{align*}
\lA f_x\rA_{L^\infty}\lesssim
\big( 1+\lA f\rA_{ H^{\frac{3}{2}}}\big)\log\big(3+ \lA f_x\rA_{L^\infty}^2+B\big)^{\frac{1}{2}},
\end{align*}
which in turn implies that, 
$$
\lA f_x\rA_{L^\infty} \lesssim \Big( 1+\lA f\rA_{ H^{\frac{3}{2}}}^2\Big)
\log\bigg(2+\int_\xR \frac{\bla \D^{2,\phi}f(x)\bra^2}{1+f_x(x)^2} \dx\bigg)^{\frac{1}{2}}.
$$
This proves~\e{v10}. Also, by plugging the bound \e{v10} in \e{v2} we get~\e{v98}. 
\end{proof}
It follows from \e{v14} and the previous estimate that
\begin{align*}
\fract A
+ B &\lesssim
	\kappa\Big(\frac{1}{\lam}\Big)^{-1}\left(1+A\right)^4 B+
\left(1+A\right)^3 B^{\frac{1}{2}} \lA\frac{f_{xx}}{\L{f_x}}\rA_{L^2}\\
&\quad+\frac{1}{\sqrt{\lam}}\left(1+A\right)^{2}
\log\big(4+B\big)^{\frac{1}{2}} B^{\frac{1}{2}}.
\end{align*}
Choosing $\lam =B^{-\mez}$, 
\begin{align}
\fract A
+ B&\lesssim \left(1+A^{4}\right)\left(\log(4+B)^{\frac{1}{2}}B^{\frac{3}{4}}+B/\kappa(B^{\mez})\right) +(1+A^3)B^{\mez}
\lA\frac{f_{xx}}{\L{f_x}}\rA_{L^2}.\label{Z5}
\end{align}

\begin{lemma}
There exists a non-decreasing function $\mathcal{G}\colon\xR_+\to\xR_+$ such that
\be\label{v200}
\lA\frac{f_{xx}}{\L{f_x}}\rA_{L^2}
\lesssim \left(1+A\right)B^{\mez}/\kappa\left(B^{\frac{1}{50}}\right)+\mathcal{G}\left(A\right).
\ee
\end{lemma}
\begin{proof}
For this proof, the key inequality is given by Proposition~\ref{P:commD}, which implies 
that, for any $\sigma\in (0,1/2)$, 
\be\label{Z1-b}
\lVert \D^{\sigma,\phi}\left(
\frac{g}{\sqrt{1+h^2}}\right)- \frac{1}{\sqrt{1+h^2}}
\D^{\sigma,\phi}g\rVert_{L^2}
\les\blA \D^{\frac{1}{2},\phi}h\brA_{L^2}   
\lVert \frac{g}{\sqrt{1+h^2}}\rVert_{\dot H^{\sigma}}.
\ee
We will work out two applications of the above estimate. 
Firstly we notice that the previous inequality holds for any function $\phi$ 
given by~\e{n10}, for some admissible weight~$\kappa$. In particular, 
it holds for $\phi\equiv 1$. Since $\partial_{xx}=-\D^\frac14 \D^\frac74$, 
the latter result implies that
\begin{equation}
\lA\frac{f_{xx}}{\L{f_x}}\rA_{L^2}\lesssim 
\left(1+\Vert f\Vert_{\dot H^{\frac{3}{2}}}\right)\lA\D^{\frac{1}{4}}
\left(\frac{|D|^{\frac{7}{4}}f}{\L{f_x}}\right)\rA_{L^2}.
\end{equation}
Secondly, we will also use the fact that \e{Z1-b} implies that
\be\label{v201}
\lA\D^{\frac{1}{4},\phi}\left(\frac{|D|^{\frac{7}{4}}f}{\L{f_x}}\right)\rA_{L^2}
\lesssim B^{\frac{1}{2}}+
\blA\D^{\frac{3}{2},\phi}f\brA_{L^2}\lA\D^{\frac{1}{4}}\left(\frac{|D|^{\frac{7}{4}}f}{\L{f_x}}\right)\rA_{L^2}.
\ee
Then, to derive the wanted estimate~\e{v200}, we must compare the two following quantities:
$$
X=\lA\D^{\frac{1}{4}}\left(\frac{|D|^{\frac{7}{4}}f}{\L{f_x}}\right)\rA_{L^2}
\quad\text{and}\quad
X_\phi=\lA\D^{\frac{1}{4},\phi}\left(\frac{|D|^{\frac{7}{4}}f}{\L{f_x}}\right)\rA_{L^2}.
$$
Firstly, we use the elementary inequality: for any $\lambda>0$, 
$$
\blA \D^\uq u\brA_{L^2}^2\lesssim \sqrt{\lambda}\lA u\rA_{L^2}^2+\frac{1}{\kappa(\lambda)^2}\blA \D^{\uq,\phi}u\brA_{L^2}^2,
$$
which is proved as above, by using Plancherel's identity, dividing the integral 
into two parts and using the equivalence $\phi\sim \kappa$. Now the previous estimate implies at once that
\begin{equation}\label{Z2}
X\lesssim  X_\phi^{\frac{1}{100}} \blA\D^{\frac{7}{4}}f\brA_{L^2}+X_\phi/\kappa\left( X_\phi^{\frac{1}{25}}\right)\cdot
\end{equation}
So, it follows from \e{v201} that
$$
X_\phi\lesssim  B^{\frac{1}{2}}+    X_\phi^{\frac{1}{100}}
\blA\D^{\frac{7}{4}}f\brA_{L^2}\blA\D^{\frac{3}{2},\phi}f\brA_{L^2}
+\blA\D^{\frac{3}{2},\phi}f\brA_{L^2} X_\phi/\kappa\left( X_\phi^{\frac{1}{25}}\right).
$$
Since $\blA\D^{\frac{7}{4}}f\brA_{L^2}^2\les \lA f\rA_{\dot{H}^{\tdm}}\lA f\rA_{\dot{H}^2}$, 
by using Young's inequality one infers that
$$
X_\phi\les  B^{\frac{1}{2}}
+\Vert f\Vert_{\dot H^{\frac32}}^{\frac{50}{99}}\Vert f\Vert_{\dot H^2}^{\frac{50}{99}}
\blA\D^{\frac{3}{2},\phi}f\brA_{L^2}^{\frac{100}{99}} +\blA\D^{\frac{3}{2},\phi}f\brA_{L^2}X_\phi/\kappa\big(X_\phi^{\frac{1}{25}}\big).$$
Since $\kappa(r)\to \infty$ as $r\to \infty$, we infer that there exists a non-decreasing function 
$\mathcal{G}\colon\xR_+\to\xR_+$ so that
$$
X_\phi\lesssim B^{\frac{1}{2}}+
\Vert f\Vert_{\dot H^2}^{\frac{20}{33}} +\mathcal{G}\left(\blA\D^{\frac{3}{2},\phi}f\brA_{L^2}\right).
$$
Combining this with \eqref{Z2}, we obtain
\begin{equation}
X\lesssim  B^{\frac{1}{2}}/\kappa\left(B^{\frac{1}{50}}\right)+\Vert f\Vert_{\dot H^2}^{\frac{20}{33}}
+\mathcal{G}\left(\blA\D^{\frac{3}{2},\phi}f\brA_{L^2}\right),
\end{equation}
and hence
\begin{equation}
\lA\frac{f_{xx}}{\L{f_x}}\rA_{L^2}\lesssim   \left(1+\Vert f\Vert_{\dot H^{\frac{3}{2}}}\right)B^{\frac{1}{2}}/
\kappa\left(B^{\frac{1}{50}}\right)
+\Vert f\Vert_{\dot H^2}^{\frac{7}{11}} +\mathcal{G}\left(\blA\D^{\frac{3}{2},\phi}f\brA_{L^2}\right).
\end{equation}
This means that
\begin{align*}
\lA\frac{f_{xx}}{\L{f_x}}\rA_{L^2}\lesssim \left(1+A\right)B^{\mez}/\kappa\left(B^{\frac{1}{50}}\right)+\Vert f\Vert_{\dot H^2}^{\frac{7}{11}} +\mathcal{G}\left(A\right).
\end{align*}
Then the wanted result follows from the bound~\e{v98}. 
This proves the lemma.
\end{proof}

It follows from~\e{Z5} and \e{v200} that
$$
\fract A
+ B\leq C \left(1+A^{4}\right)\left(\log(4+B)^{\frac{1}{2}}B^{\frac{3}{4}}+B/\kappa(B^{\mez})+ B/\kappa(B^{\frac{1}{50}})\right)+\mathcal{G}\left(A\right).
$$
Since $\lim_{r\to+\infty}\kappa(r)=+\infty$, it follows that, up to modifying the function $\mathcal{G}$, we have
$$
\fract A
+ \frac{1}{2}B\leq \mathcal{G}\left(A\right).
$$
Now the proposition follows at once from a continuity argument. 
\end{proof}

\subsection{Contraction estimate}\label{S:3.5}
To conclude the proof we need an estimate for the difference of two solutions. 
This will serve us to prove uniqueness results as well as to prove that the approximate solutions of the Muskat equations defined above form a Cauchy sequence and hence they converge strongly.
\begin{proposition}\label{P:6.final}
Consider a function $\phi$ defined by~\e{n10}, for some admissible weight~$\kappa$ which in addition satisfies
$$
\lim_{r\to+\infty}\kappa(r)=+\infty. 
$$
Then there exists a non-decreasing function $\mathcal{G}\colon\xR_+\to\xR_+$ 
such that the following property holds. 
For any $\eps\in [0,1]$, any $T\in (0,1]$ and any couple of solutions $f_1,f_2$ of the approximate 
Muskat equation~\e{main:acp}, defined in the time interval $[0,T]$  
with initial data $f_{1,0},f_{2,0}$ respectively, 
satisfying 
\begin{equation}\label{Z105}
\sup_{t\in [0,T]}
\blA f_k(t)\brA_{\mathcal{H}^{\tdm,\phi}}^2\leq M<\infty,\quad
\int_0^{T}
\int \frac{(\partial_{xx}f_k)^2}{1+(\partial_xf_k)^2}\dx\dt <\infty \quad (k=1,2),
\end{equation}
the difference $g=f_1-f_2$ is estimated by
\begin{equation}\label{wue}
\fract\Vert g(t) \Vert_{\dot{H}^\mez}\leq \mathcal{G}(M)
\log\left(2+\sum_{k=1}^2
\lA\frac{\partial_{xx}f_k}{\langle\partial_xf_k\rangle}(t)\rA_{L^2}\right)  \Vert g(t)\Vert_{\dot H^{\mez}} .
\end{equation}
\end{proposition}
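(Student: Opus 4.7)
The plan is to derive the evolution equation satisfied by the difference $g\defn f_1-f_2$, perform an $\dot H^{\mez}$-energy estimate using the paralinearised form of Proposition~\ref{P:4.1}, and finally extract the logarithmic factor via the interpolation already exploited in the proof of~\e{v10}.

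First, writing the two approximate equations in the form~\e{main:acp-bis} and subtracting, I obtain
\begin{equation*}
\partial_t g + \varepsilon\D^8 g + W(f_1)\partial_x g + \frac{1}{1+f_{1,x}^2}\D g = \mathcal R_1 + \mathcal R_2 + \mathcal R_3,
\end{equation*}
where the three remainders collect the differences of coefficients and of the operator $R$:
\begin{align*}
\mathcal R_1 &= \bigl(W(f_2)-W(f_1)\bigr)\partial_x f_2,\\
\mathcal R_2 &= \left(\frac{1}{1+f_{2,x}^2}-\frac{1}{1+f_{1,x}^2}\right)\D f_2,\\
\mathcal R_3 &= R(f_1)f_1 - R(f_2)f_2.
\end{align*}
Each $\mathcal R_j$ depends linearly on $g$ modulo coefficients built out of $(f_1,f_2)$, through the same null-type combinations $(\Delta_\alpha f_k)^2/(1+(\Delta_\alpha f_k)^2)$ analysed in Section~\ref{S:5}.

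Next, I would apply $\D^{\mez}$ to this equation and pair with $\D^{\mez}g$ in $L^2$. Proceeding as in Corollary~\ref{C:5.2} — using $\partial_x=-\mathcal H \D$ and the computation~\e{w1} — the main parabolic term produces the non-negative contribution $\int(\D g)^2/(1+f_{1,x}^2)\dx$, which I simply discard (keeping only its sign), plus two commutators and three remainder contributions. The commutator involving $W(f_1)$ is bounded by Proposition~\ref{X1} specialised to $\phi\equiv 1$ and $\lambda\to+\infty$; the commutator of $\D^{\mez}$ with $(1+f_{1,x}^2)^{-1}\D$ is controlled by Proposition~\ref{P:commD}; and the three $\mathcal R_j$ are estimated by arguments parallel to Propositions~\ref{P:5.6} and~\ref{P:5.7}, where $g$ now plays the role of the argument carrying the $\dot H^{\mez}$-factor. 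Collecting the estimates yields
\begin{equation*}
\tfrac12\fract\lA g\rA_{\dot H^{\mez}}^2 \le C(M)\bigl(1+\lA f_{1,x}\rA_{L^\infty}+\lA f_{2,x}\rA_{L^\infty}\bigr)\lA g\rA_{\dot H^{\mez}}^2.
\end{equation*}
The logarithmic factor is then produced by the same interpolation underlying~\e{v10} and~\e{v98}:
\begin{equation*}
\lA f_{k,x}\rA_{L^\infty}\lesssim \bigl(1+\lA f_k\rA_{\mathcal H^{\tdm,\phi}}^2\bigr)\log\Big(2+\lA\tfrac{\partial_{xx}f_k}{\L{\partial_x f_k}}\rA_{L^2}\Big)^{\mez},
\end{equation*}
which, under the uniform bound~\e{Z105}, gives exactly \e{wue} after dividing by $\lA g\rA_{\dot H^{\mez}}$.

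The main obstacle is that one must ensure the three error sources yield only a \emph{logarithmic} loss in $\lA f_{k,x}\rA_{L^\infty}$, not a polynomial one. A naive analysis would produce factors like $\lA f_{k,x}\rA_{L^\infty}^2$, which after interpolation would give a full power of $\int(\partial_{xx}f_k)^2/(1+f_{k,x}^2)\dx$ and make Gr\"onwall's lemma useless. Avoiding this requires using the null-type structure of Propositions~\ref{X1},~\ref{P:5.6} and~\ref{P:5.7} systematically, so that the denominators $1+f_{k,x}^2$ are absorbed in all the most singular terms and at most a single factor of $\L{f_{k,x}}$ survives — only then can it be traded for a logarithm.
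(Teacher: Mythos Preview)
Your proposal has a genuine gap: you discard the parabolic dissipation $\int(\D g)^2/(1+f_{1,x}^2)\,\dx$, and this term is not optional. When you apply Proposition~\ref{X1} (or~\ref{P:5.6}) to the difference equation and send $\lambda\to+\infty$, the $1/\sqrt\lambda$ term disappears but the other term survives with coefficient $\|f\|_{\dot H^{3/2}_\lambda}\to\|f\|_{\dot H^{3/2}}$, yielding a right-hand side of size
\[
C(M)\,\lA\frac{\D g}{\L{\partial_x f_k}}\rA_{L^2}^2.
\]
This is a norm of $g$ at regularity $\dot H^1$, not $\dot H^{1/2}$; it cannot be bounded by $C(M)\|g\|_{\dot H^{1/2}}^2$, and with the dissipation thrown away you have nothing left to absorb it. Your intermediate inequality $\tfrac12\fract\|g\|_{\dot H^{1/2}}^2\le C(M)(1+\sum_k\|f_{k,x}\|_{L^\infty})\|g\|_{\dot H^{1/2}}^2$ is therefore unreachable by this route.

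The paper's proof instead \emph{keeps} the dissipation (in fact it symmetrises in $f_1,f_2$ to obtain both $\int(\D g)^2/\L{\partial_x f_1}^2\,\dx$ and $\int(\D g)^2/\L{\partial_x f_2}^2\,\dx$ on the left), keeps $\lambda$ finite, and splits the right-hand side into two pieces as in the claim~\e{claim:unique}: a piece carrying $\kappa(1/\lambda)^{-1}$ times $\sum_k\|\D g/\L{\partial_x f_k}\|_{L^2}^2$, and a piece carrying $1/\sqrt\lambda$ times $\|g\|_{\dot H^{1/2}}\sum_k\|\D g/\L{\partial_x f_k}\|_{L^2}$. The hypothesis $\kappa(r)\to\infty$ is then used to choose $\lambda=\lambda(M,\kappa)$ so small that the first piece is absorbed by the dissipation; the second is handled by Young's inequality (half into the dissipation, half into $\mathcal G(M)\log(2+B)\|g\|_{\dot H^{1/2}}^2$, using~\e{v98} to control $\|\D g\|_{L^2}$). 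This is exactly where the weighted space and the assumption $\lim_{r\to\infty}\kappa(r)=\infty$ enter; your $\phi\equiv1$, $\lambda\to\infty$ specialisation erases the mechanism. A smaller point: pairing the equation for $g$ directly with $\D g$ (as the paper does) avoids the extra commutator of $\D^{1/2}$ with $(1+f_{1,x}^2)^{-1}\D$ that your scheme introduces.
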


\begin{proof}[Proof of Proposition~\ref{P:6.final}]
Using the decomposition 
of $\mathcal{T}(f_k)f_k$, we find that the difference $g=f_1-f_2$ satisfies
\begin{align*}
\partial_tg+W(f_1)\partial_x g+\frac{\D g}{1+(\partial_xf_1)^2}
&=  R(f_1)(g)+\left(\mathcal{T}(f_2+g)-\mathcal{T}(f_2)\right)f_2.
\end{align*}
We take 
the $L^2$-scalar product of this equation with $\D g$ to get
\begin{align*}
\frac{1}{2}\fract\Vert g \Vert^{2}_{\dot{H}^\mez}+\int\frac{( \D g)^2}{1+(\partial_x f_{1})^2} \dx
&\leq \left|\langle W(f_1)\partial_x g,|D| g\rangle\right|+\left|\left\langle R(f_1)(g),\D g\right\rangle\right|\\
&\quad+\left|\langle\left(\mathcal{T}(f_2+g)-\mathcal{T}(f_2)\right)f_2,\D g\rangle\right|.
\end{align*}
By combining the previous estimate with the one that we obtain by 
interchanging the role of $f_1$ and $f_2$, we find that
\be\label{claim:unique0}
\fract\Vert g \Vert^{2}_{\dot{H}^\mez}+
\int\left(\frac{( \D g)^2}{1+(\partial_x f_{1})^2}+\frac{( \D g)^2}{1+(\partial_x f_{2})^2}\right) \dx
\le (I)+(II)+(III)
\ee
with
\begin{align*}
(I)&=\left|\langle W(f_1)\partial_x g,|D| g\rangle\right|+\left|\langle W(f_2)\partial_x g,|D| g\rangle\right|,\\
(II)&=\left|\left\langle R(f_1)(g),\D g\right\rangle\right|+\left|\left\langle R(f_2)(g),\D g\right\rangle\right|,\\
(III)&=\left|\langle\left(\mathcal{T}(f_2+g)-\mathcal{T}(f_2)\right)f_2,\D g\rangle\right|+\left|\langle\left(\mathcal{T}(f_1-g)-\mathcal{T}(f_1)\right)f_1,\D g\rangle\right|.
\end{align*}

Set
\begin{align*}
A(t)&=\blA f_1(t)\brA_{\dot{H}^{\frac{3}{2}}}^2+\blA f_2(t)\brA_{\dot {H}^{\frac{3}{2}}}^2,\\
B(t)&=\int \frac{(\partial_{xx}f_1(t,x))^2}{1+(\partial_xf_1(t,x))^2}\dx+
\int \frac{(\partial_{xx}f_2(t,x))^2}{1+(\partial_xf_2(t,x))^2}\dx.
\end{align*}
We claim that for any positive constant $\lambda\in (0,1)$, there holds
\begin{equation}\label{claim:unique}
\begin{aligned}
\la (I)\ra+\la (II)\ra+\la(III)\ra&\lesssim \frac{1}{\sqrt{\lam}}  (1+A)^2\log(2+B)^\mez \Vert g\Vert_{\dot H^{\mez}}\sum_{k=1}^2\lA\frac{\D g}{\L{\partial_xf_k}}\rA_{L^2}\\
&\quad + \frac{1}{\kappa(1/\lambda)}
\left(1+A^4\right)A^{\frac{1}{2}}\sum_{k=1}^2\lA\frac{\D g}{\L{\partial_xf_k}}\rA_{L^2}^2.
\end{aligned}
\end{equation}
Assume that this is true. Since $\lim_{r\to+\infty}\kappa(r)=+\infty$, 
we can choose  $\lambda=\lambda(A,\kappa)$ small enough such that 
$$    \la (I)\ra+\la (II)\ra+\la(III)\ra\leq \mathcal{G}(A)\log(2+B)  \Vert g\Vert_{\dot H^{\mez}}^2+\frac{1}{2}\sum_{k=1}^2\lA\frac{\D g}{\L{\partial_xf_k}}\rA_{L^2}^2.$$
So, by \eqref{claim:unique0}, 
\begin{equation*}
\fract\Vert g \Vert^{2}_{\dot{H}^\mez}\leq \mathcal{G}(A)\log(2+B)  \Vert g\Vert_{\dot H^{\mez}}^2. 
\end{equation*}
Then the wanted estimate~\e{wue} follows at once. 

{\em Step 1: estimates of $(I)$ and $(II)$.} 
It follows from~\e{w1} that
$$
\left|\langle W(f_k)\partial_x g, \D g\rangle\right|
= \frac{1}{2} \left|\Big\langle \left[\mathcal{H}, W(f_k)\right ] \D g,\D g\Big\rangle\right|.
$$
Therefore, we deduce from Proposition~\ref{X1} that for any $\lambda\in (0,1)$
$$
\la (I)\ra 
\les\frac{1}{\sqrt{\lam}}  A^{\frac{1}{2}}
\Vert g\Vert_{\dot H^{\frac{1}{2}}}
\Vert \D g\Vert_{L^2}+ \frac{1}{\kappa(1/\lambda)} \sum_{k=1}^2\left(1+A^{\frac{7}{2}}\right)
A^{\frac{1}{2}}
\lA\frac{g_x}{\L{\partial_xf_k}}\rA_{L^2}
\lA\frac{\D g}{\L{\partial_xf_k}}\rA_{L^2}.
$$
Similarly, it follows from Proposition~\ref{P:5.6} that for any $\lambda\in (0,1)$ 
$$
\la (II)\ra \les\frac{1}{\sqrt{\lam}}  A\Vert g\Vert_{\dot H^{\mez}}\Vert \D g\Vert_{L^2}+\frac{1}{\kappa(1/\lambda)}\sum_{k=1}^2\left(1+A\right)
A^{\frac{1}{2}}\lA\frac{g_x}{\L{\partial_xf_k}}\rA_{L^2}\lA
\frac{\D g}{\L{\partial_xf_k}}\rA_{L^2}.
$$

Notice that 
$$\lA \D g\rA_{L^2}\le \big(1+\lA \partial_xf_k\rA_{L^\infty}^2\big)^\mez
\lA\frac{\D g}{\L{\partial_xf_k}}\rA_{L^2}
\les (1+A)\log(2+B)^\mez\lA\frac{\D g}{\L{\partial_xf_k}}\rA_{L^2},$$
where we have used the interpolation estimate~\e{v98}.\\
On the other hand, it follows from Lemma~\ref{L:Hilbert2} that
$$
\lA\frac{g_x}{\L{\partial_xf_k}}\rA_{L^2}
\les \(1+\lA f_k\rA_{\dot{H}^{\frac{3}{2}}}\)
\lA\frac{\D g}{\L{\partial_xf_k}}\rA_{L^2}.
$$
Consequently,
\begin{equation}
\begin{aligned}
\la (I)\ra+\la (II)\ra &\les \frac{1}{\sqrt{\lam}}
(1+A)^2\log(2+B)^\mez \Vert g\Vert_{\dot H^{\mez}}\sum_{k=1}^2\lA\frac{\D g}{\L{\partial_xf_k}}\rA_{L^2}\\
&\quad + \frac{1}{\kappa(1/\lambda)}
\left(1+A^4\right)A^{\frac{1}{2}}\sum_{k=1}^2\lA\frac{\D g}{\L{\partial_xf_k}}\rA_{L^2}^2.
\end{aligned}
\end{equation}
By using the Young's inequality, we see that $\la (I)\ra+\la (II)\ra$ is estimated by 
the right-hand side of~\e{claim:unique}.

{\em Step 2: estimate of $(III)$.} Dividing and multiplying by $\L{\partial_xf_k}$ 
and then using the Cauchy-Schwarz inequality, we have
\begin{align*}
\la (III)\ra&\le \lA \L{\partial_xf_2}
\(\mathcal{T}(f_2+g)-\mathcal{T}(f_2)\)f_2\rA_{L^2}
\lA\frac{\D g}{\L{\partial_xf_2}}\rA_{L^2}\\
&+\lA \L{\partial_xf_1}\(\mathcal{T}(f_1-g)-\mathcal{T}(f_1)\)f_1\rA_{L^2}
\lA\frac{\D g}{\L{\partial_xf_1}}\rA_{L^2}.
\end{align*}
Consider three functions $h_1,h_2,h$. We want to estimate 
$$
\Lambda\defn \L{\partial_xh_1}\left(\mathcal{T}(h_1)-\mathcal{T}(h_2)\right)h.
$$ 
Recall that
$$
\mathcal{T}(h_k)h = -\frac{1}{\pi}\int_\xR\left(\partial_x\Delta_\alpha h\right)
\frac{\left(\Delta_\alpha h_k\right)^2}{1+\left(\Delta_\alpha h_k\right)^2}\dalpha.
$$
Using the obvious estimate
\begin{equation*}
\left|\frac{x_1^2}{1+x_1^2}-\frac{x_2^2}{1+x_2^2}\right|\lesssim \frac{1}{\sqrt{1+x_1^2}}
|x_1-x_2|,
\end{equation*}
we have
\begin{align*}
\la \Lambda\ra 
&\les \int \frac{\L{\partial_x h_1}}{\L{\Delta_\a h_1}}
\la \Delta_\a(h_1-h_2)\ra\la \Delta_\a h_x\ra
\dalpha\\
&\les \int \(1+|\partial_xh_1-\Delta_{\alpha}h_1|\)
\la \Delta_\a(h_1-h_2)\ra\la \Delta_\a h_x\ra
\dalpha.
\end{align*}
Consequently, using the Cauchy-Schwarz inequality, 
we find that the $L^2$-norm of $\Lambda$ 
is bounded by 
\begin{align*}
&\(\iint \la \Delta_\a(h_1-h_2)\ra^2\dalpha\dx\)^\mez
\(\iint \la \Delta_\a h_x\ra^2\dalpha\dx\)^\mez+\\
&\(\iint \la \Delta_\a h_1-\partial_xh_1\ra^2\frac{\dalpha\dx}{|\alpha|^2}\)^\mez
\bigg(\iint\la\frac{\delta_\alpha (h_1-h_2)}{|\alpha|^{1/4}}\ra^4\frac{\dalpha\dx}{|\alpha|}\bigg)^\frac14
\bigg(\iint\la\frac{\delta_\alpha h_x}{|\alpha|^{1/4}}\ra^4\frac{\dalpha\dx}{|\alpha|}\bigg)^\frac14
\end{align*}
and hence, in view of 
the embedding $\dot{H}^\mez(\xR)\hookrightarrow \dot{F}^{1/4}_{4,4}(\xR)$ (see~\e{FB}) and 
the estimate~\e{an1:x1}, we conclude that
$$
\lA \Lambda\rA_{L^2}\
\les \(1+\lA h_1\rA_{\dot{H}^\tdm}\)\lA h_1-h_2\rA_{\dot{H}^\mez}\lA h\rA_{\dot{H}^\tdm}.
$$
This implies that
\begin{align*}
\la (III)\ra&\le \sum_{k=1}^2 \(1+A^{\frac{1}{2}}\)A^{\frac{1}{2}}
\lA g\rA_{\dot{H}^\mez}\lA \frac{\D g}{\L{\partial_xf_k}}\rA_{L^2}.
\end{align*}
This proves that $(III)$ is estimated by the right-hand side of \e{claim:unique}, which completes the proof.
\end{proof}

\subsection{Enhanced regularity}
Eventually, we need to prove a result which asserts that one can always 
improve the regularity of a solution with values in the 
endpoint Sobolev space in order to gain the existence of a weight.  
\begin{proposition}\label{P:6.1b}
Let $f\in C^0([0,T];H^{3/2}(\xR))$ be a solution of the Muskat equation~\e{a1} such that 
$f_{xx}/\langle f_x\rangle$ belongs to $L^2((0,T)\times\xR)$. 
Assume that initially $f_0$ belongs to $\mathcal{H}^{\tdm,\phi_0}(\xR)$ where $\phi_0$ is given by~$\e{n60b}$ 
for some admissible weight $\kappa_0$. Then there exists $T_0<T$ such that
$$
\sup_{t\in [0,T_0]}
\blA D^{\tdm,\phi_0}f(t)\brA_{L^2}^2
+\int_0^{T_0}\int_\xR \frac{\bla \D^{2,\phi_0}f\bra^2}{1+(\partial_x f)^2} \dx \dt<\infty.
$$
\end{proposition}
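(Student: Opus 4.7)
The plan is to construct a candidate solution $\tilde f\in\mathcal X^{\tdm,\phi_0}(T_0)$ with $\tilde f(0)=f_0$ by viscous approximation of the mollified datum, and then identify $\tilde f$ with the given $f$ through a bootstrap in time.

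\emph{Construction of $\tilde f$.} Set $M_0\defn\lA f_0\rA_{L^2}^2+\blA\D^{\tdm,\phi_0}f_0\brA_{L^2}^2$. For $\varepsilon\in(0,1]$, mollify: $f_{0,\varepsilon}=f_0\star\rho_\varepsilon$. Since the Fourier symbol of $\D^{\tdm,\phi_0}$ is nonnegative, $\lA f_{0,\varepsilon}\rA_{L^2}^2+\blA\D^{\tdm,\phi_0}f_{0,\varepsilon}\brA_{L^2}^2\le M_0$ and $f_{0,\varepsilon}\to f_0$ in $\mathcal H^{\tdm,\phi_0}$. Proposition~\ref{P:3.6} produces global smooth solutions $f_\varepsilon$ of~\eqref{main:acp} with data $f_{0,\varepsilon}$; Proposition~\ref{P:6.3} then yields $T_0>0$ depending only on $M_0$ and $\phi_0$ such that, uniformly in $\varepsilon$,
\begin{equation*}
\sup_{t\in[0,T_0]}\left\{\lA f_\varepsilon(t)\rA_{L^2}^2+\blA\D^{\tdm,\phi_0}f_\varepsilon(t)\brA_{L^2}^2\right\}+\int_0^{T_0}\!\!\int_\xR\frac{\bla\D^{2,\phi_0}f_\varepsilon\bra^2}{1+(f_\varepsilon)_x^2}\dx\dt\le 4M_0.
\end{equation*}
Applying the contraction estimate Proposition~\ref{P:6.final} with $\phi=\phi_0$ and $M\lesssim\sqrt{M_0}$ to pairs $(f_{\varepsilon_1},f_{\varepsilon_2})$---and using Cauchy--Schwarz to derive $L^1_t$-integrability of the logarithmic prefactor from the $L^2_t$-integrability of $(f_\varepsilon)_{xx}/\L{(f_\varepsilon)_x}$---Gronwall yields $f_\varepsilon\to\tilde f$ in $C^0([0,T_0];\dot H^{\mez})$. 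Interpolating with the uniform $\dot H^\tdm$ bound upgrades this to $C^0([0,T_0];\dot H^s)$ for every $s<\tdm$. Weak-$*$ compactness and Fatou transfer the weighted energy bound to $\tilde f$; a standard continuity-in-time argument places $\tilde f$ in $\mathcal X^{\tdm,\phi_0}(T_0)$, and Proposition~\ref{P:continuity} permits passage to the limit in~\eqref{main:acp}, so $\tilde f$ solves the Muskat equation with $\tilde f(0)=f_0$.

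\emph{Identification $\tilde f=f$ --- the main obstacle.} Both $\tilde f$ and $f$ solve Muskat in $X^\tdm$ from the same initial datum, but Proposition~\ref{P:6.final} demands that \emph{both} solutions satisfy a common $\mathcal H^{\tdm,\phi}$ bound---precisely what we are trying to establish for $f$. To break this circularity, I set $t^*\defn\sup\{t\in[0,\min(T_0,T)]:f\equiv\tilde f\text{ on }[0,t]\}$ and argue $t^*=\min(T_0,T)$ by a two-step bootstrap. For $t^*>0$, I introduce a second viscous family $g_\varepsilon$ solving~\eqref{main:acp} with the \emph{unmollified} datum $f_0$: Proposition~\ref{P:6.3} still provides the uniform $\mathcal H^{\tdm,\phi_0}$ bound, so $g_\varepsilon\to\tilde f$ in $C^0_t\dot H^\mez$ by the argument above. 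The crux is then to prove $g_\varepsilon\to f$ in $\dot H^\mez$ via an \emph{asymmetric} use of Proposition~\ref{P:6.final} applied to $(g_\varepsilon,f)$, in which only $g_\varepsilon$ carries the weighted bound. Inspection of the proof of Proposition~\ref{P:6.final} shows that the weighted norms of the two solutions enter only through the choice of the absorption parameter $\lambda=\lambda(A,\kappa_0)$, where $A$ involves only the unweighted $\dot H^\tdm$ norms (finite for $f$ by hypothesis) and the admissible weight $\kappa_0$ is supplied by $g_\varepsilon$; the estimate therefore still closes, yielding $g_\varepsilon\to f$ and hence $f=\tilde f$ on a right neighborhood of $0$.

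Finally, if $t^*<\min(T_0,T)$, then by continuity $f(t^*)=\tilde f(t^*)\in\mathcal H^{\tdm,\phi_0}$ with $\lA f(t^*)\rA_{L^2}^2+\blA\D^{\tdm,\phi_0}f(t^*)\brA_{L^2}^2\le 4M_0$; restarting the viscous construction from $t^*$ with datum $f(t^*)$ produces, by the same arguments, an $\mathcal X^{\tdm,\phi_0}$-extension of $\tilde f$ which must agree with $f$ on a right neighborhood of $t^*$, contradicting maximality. Hence $t^*=\min(T_0,T)$ and $f\in\mathcal X^{\tdm,\phi_0}(T_0)$ with the energy bound inherited from $\tilde f$. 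The principal technical hurdle throughout this plan is the asymmetric use of Proposition~\ref{P:6.final} supporting $g_\varepsilon\to f$: verifying carefully that the proof of the contraction estimate tolerates a one-sided weighted bound (so that $f$ itself need not be controlled in $\mathcal H^{\tdm,\phi_0}$ a priori) is the delicate step on which the whole argument hinges.
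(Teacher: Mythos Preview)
Your approach---construct a weighted solution $\tilde f$ by viscous approximation and then identify $\tilde f=f$ via the contraction estimate---is fundamentally different from the paper's, and it has a genuine gap precisely where you flag it: the ``asymmetric'' use of Proposition~\ref{P:6.final}.

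Your claim is that in the proof of the contraction estimate the weighted norms enter only through the choice of $\lambda=\lambda(A,\kappa_0)$ with $A$ the \emph{unweighted} $\dot H^{3/2}$ norm. This is not correct. In Step~1 of that proof, the bound for $(I)$ comes from Proposition~\ref{X1}, whose conclusion involves $\lA f_k\rA_{\dot H^{3/2}_\lambda}$ for \emph{each} $k=1,2$. To extract the smallness factor $\kappa(1/\lambda)^{-1}$ needed for the absorption, one must invoke~\eqref{est:kappa3}, which converts $\lA f_k\rA_{\dot H^{3/2}_\lambda}$ into $\kappa(1/\lambda)^{-1}\blA\D^{3/2,\phi}f_k\brA_{L^2}$---a \emph{weighted} norm. (The $A^{1/2}$ written in the paper at that point should be read as the $M^{1/2}$ from hypothesis~\eqref{Z105}.) Without the weighted bound on $f$, the only control on $\lA f(t)\rA_{\dot H^{3/2}_\lambda}$ is that it tends to zero as $\lambda\to 0$ by dominated convergence, but the rate depends on the \emph{profile} of $f(t)$, not on its $\dot H^{3/2}$ norm; you therefore cannot choose $\lambda$ from $A$ alone. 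A separate issue: your $g_\varepsilon$ solves the $\varepsilon$-regularized equation while $f$ solves the exact one, so the difference equation for $g_\varepsilon-f$ carries an extra $\varepsilon\D^8 g_\varepsilon$ term that you do not treat.

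One could salvage your scheme by observing that, since $f\in C^0([0,T];H^{3/2})$, the trajectory $\{f(t):t\in[0,T]\}$ is compact and hence equi-integrable in $H^{3/2}$; by the mechanism behind Lemma~\ref{L:enhanced} there is \emph{some} admissible weight $\kappa'\to\infty$ with $\sup_t\blA\D^{3/2,\phi'}f(t)\brA_{L^2}<\infty$, and then Proposition~\ref{P:6.final} applied with $\phi'$ (not $\phi_0$) would legitimately close the identification. But you do not make this argument.

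The paper's proof is entirely different and avoids the uniqueness detour altogether. It works directly on the given solution $f$: one mollifies $f$ itself (not the data) to $f_\varepsilon=f\star\rho_\varepsilon$, runs the weighted energy identity of Corollary~\ref{C:5.2} pairing against $(\D^{2,\phi}f_\varepsilon)\star\rho_\varepsilon$, bounds the right-hand side via Propositions~\ref{X1},~\ref{P:5.6},~\ref{P:5.7}, and lets $\varepsilon\to 0$ (with a separate argument that the mollifier-commutator $(W(f)\mathcal H g)\star\rho_\varepsilon-W(f)\mathcal H(g\star\rho_\varepsilon)$ vanishes in the limit). Crucially, this is first carried out for a \emph{bounded} admissible weight $\kappa\lesssim\kappa_0$, so that $\blA\D^{3/2,\phi}f\brA_{L^2}$ is automatically finite; the resulting inequality closes on a short interval $[0,T_0]$ with $T_0$ depending only on $\kappa_0$ and the data, and then one lets $\kappa\uparrow\kappa_0$ by monotone convergence. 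No appeal to the contraction estimate is made, which is important for the paper's logical order: Proposition~\ref{P:6.1b} is precisely what is used afterwards to \emph{justify} applying Proposition~\ref{P:6.final} to arbitrary $X^{3/2}$ solutions.
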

\begin{proof}
Consider a function $\phi$ defined by~\e{n10}, for some admissible weight~$\kappa$ 
with $\sup_{r\geq 0}\kappa(r)<\infty$ and such that $\kappa\lesssim \kappa_0$. 
Introduce $f_\varepsilon=f\star \rho_\varepsilon$ where 
$\rho_\varepsilon$ is the standard mollifier in $\mathbb{R}$. 
Proceeding as in the proof of Corollary \ref{C:5.2} (that is by commuting $\D^{1,\phi}$ 
with the Muskat equation and then by taking the $L^2$-scalar 
product with $(\D^{2,\phi}f_\eps)\star \rho_\eps$), 
we deduce from Fatou's lemma that, for any $T_0\in (0,T]$,
\begin{align*}
\sup_{t\in [0,T_0]} \blA D^{\tdm,\phi}f(t)\brA_{L^2}^2
+ 2\int_{0}^{T_0}\int_\xR 
\frac{\bla \D^{2,\phi}f\bra^2}{1+(\partial_x f)^2} \dx \dtau 
\leq  \blA D^{\tdm,\phi}f(0)\brA_{L^2}^2+2\sum_{k=1}^3I_k(T_0),
\end{align*}
where
\begin{align*}
&I_1(T_0)=\limsup_{\varepsilon\to 0} \int_{0}^{T_0} \left|\Big\langle  W(f)\mathcal{H} \D^{2,\phi}f,\D^{2,\phi}f_\varepsilon\star \rho_\varepsilon\Big\rangle \right|\dtau, \\
&
I_2(T_0)=\limsup_{\varepsilon\to 0} \int_{0}^{T_0}
\left|\big\langle R(f)(\D^{1,\phi}f), \D^{2,\phi}f_\varepsilon\star \rho_\varepsilon\big\rangle\right|\dtau ,
\\&
I_3(T_0)=\limsup_{\varepsilon\to 0} \int_{0}^{T_0}
\left|\Big\langle\left[\D^{1,\phi},\mathcal{T}(f)\right]f, \D^{2,\phi}f_\varepsilon\star \rho_\varepsilon\Big\rangle\right|\dtau.
\end{align*}

By combining the estimates given by 
Propositions~\ref{X1},~\ref{P:5.6} and~\ref{P:5.7}, we obtain 
\begin{align*}
& I_2(T_0)+I_3(T_0)+\limsup_{\varepsilon\to 0} \int_{0}^{T_0}
\left|\Big\langle  W(f)\mathcal{H} \D^{2,\phi}f_\varepsilon,\D^{2,\phi}f_\varepsilon\Big\rangle \right|\dtau\\
&\qquad\lesssim
\kappa\Big(\frac{1}{\lam}\Big)^{-1}\int_0^{T_0}\left(1+\blA\D^{\tdm,\phi}f\brA_{L^2}^{7}\right)
\blA\D^{\tdm,\phi}f\brA_{L^2}\lA\frac{|D|^{2,\phi}f}{\L{f_x}}\rA_{L^2}^2 \dtau
\\
&\qquad\quad+
\int_0^{T_0}\left( 1+\blA\D^{\frac{3}{2},\phi} f\brA_{L^2}^{5}\right)
\brA\D^{\frac{3}{2},\phi}f\brA_{L^2} \lA\frac{f_{xx}}{\L{f_x}}\rA_{L^2}
\lA\frac{|D|^{2,\phi}f}{\L{f_x}}\rA_{L^2}\dtau \\
&\qquad\quad+\frac{1}{\sqrt{\lam}}\int_0^{T_0}
\left(1+\blA\D^{\frac{3}{2},\phi}f\brA_{L^2}\right) \blA\D^{\frac{3}{2},\phi}f\brA_{L^2}^2
\blA\D^{2,\phi}f\brA_{L^2}\dtau,
\end{align*}
for any $\lambda\in (0,1).$\\
Now, we will prove that 
\begin{align}\label{Z1b}
\limsup_{\varepsilon\to 0} \int_{0}^{T_0}
\left|\Big\langle(W(f)\mathcal{H} g)\star\rho_\varepsilon-W(f)\mathcal{H} g\star\rho_\varepsilon \, , \, g\star\rho_\varepsilon\Big\rangle \right|\dtau=0,
\end{align}
with $g=\D^{2,\phi}f$. 
It is enough to show that 
\begin{align}
\limsup_{\varepsilon\to 0}\int_{0}^{T_0} \blA\langle f_x\rangle 
\big( (W(f)\mathcal{H} g)\star\rho_\varepsilon-W(f)\mathcal{H} g\star\rho_\varepsilon\big)\brA_{L^2}^2\dtau=0.
\end{align}
To see this, we set $\tilde{g}=|\mathcal{H}(g)|/\langle f_x\rangle$ 
and repeat the computations used to 
derive~\eqref{Z1-W}. This gives
\begin{align*}
&\int_{0}^{T_0}\blA 
\langle f_x\rangle
\big((W(f)\mathcal{H} g)\star\rho_\varepsilon-W(f)\mathcal{H} g\star\rho_\varepsilon\big)\brA_{L^2}^2\dtau\\
&\lesssim  \int_{0}^{T_0}\int\left(\fint_{|y|\leq \varepsilon}
\langle f_x(x)\rangle \langle f_x(x-y)\rangle|W(f)(x)- W(f)(x-y)|\tilde{g}(x-y)\dy\right)^2 \dx\dtau\\
&\lesssim
\int_0^{T_0}\left(\iint_{|x-y|\leq \varepsilon}
\langle f_x(x)\rangle^5 \langle f_x(y)\rangle^5 |W(f)(x)-W(f)(y)|^5 \frac{\dy\dx}{|x-y|^{2}}\right)^{\frac{2}{5}} \dtau
\Vert\tilde{g}\Vert_{L^2((0,T)\times\mathbb{R})}^{2}.
\end{align*}
The next step is a reprise of one argument in the proof of Proposition~\ref{X1}. 
More precisely, it follows from the proof of \eqref{X12} that 
\begin{equation*}
\lim_{\varepsilon\to 0}\int_0^{T_0} \left(\iint_{|x-y|\leq \varepsilon}
\langle f_x(x)\rangle^5 \langle f_x(y)\rangle^5
\la W(f)(x)-W(f)(y)\ra^5 \frac{\dy\dx}{|x-y|^{2}}\right)^{\frac{2}{5}}\dtau=0.
\end{equation*}
This implies \eqref{Z1b}.

Now, we set 
\begin{equation*}
A(T_0)=\sup_{t\in [0,T_0]} \Big\{ \blA D^{\tdm,\phi}f(t)\brA_{L^2}^2+\lA f(t)\rA_{L^2}^2\Big\},
\qquad
B(T_0)=\int_{0}^{T_0}\lA\frac{|D|^{2,\phi}f}{\L{f_x}}\rA_{L^2}^2 \dtau.
\end{equation*}
We previous estimates imply that, for any $\lambda\in (0,1)$, 
\begin{align*}
 A(T_0 )+ B(T_0)
 &\lesssim  \blA D^{\tdm,\phi}f(0)\brA_{L^2}^2+\blA f(0)\brA_{L^2}^2+
 \left(1+A(T_0 )^{4}\right) \kappa\Big(\frac{1}{\lam}\Big)^{-1} B(T_0)
\\\nonumber
&\quad+ \left( 1+A(T_0 )^{3}\right)
\int_0^{T_0} \lA\frac{f_{xx}}{\L{f_x}}\rA_{L^2}\lA\frac{|D|^{2,\phi}f}{\L{f_x}}\rA_{L^2}\dtau \\\nonumber
&\quad+\frac{1}{\sqrt{\lam}}\left(1+A(T_0)^{2}\right) \int_0^{T_0}\blA\D^{2,\phi}f\brA_{L^2}\dtau.
\end{align*}
On the other hand, by \eqref{v98} and \eqref{v200}, for any $\delta\in (0,1)$, we have 
a non-decreasing function $\mathcal{G}_{\kappa_0}\colon \xR_+\to\xR_+$ (depending only on $\kappa_0$) such that
\begin{align*}
&\lA\frac{f_{xx}}{\L{f_x}}\rA_{L^2}
\leq  \delta \lA\frac{|D|^{2,\phi}f}{\L{f_x}}\rA_{L^2}+\mathcal{G}_{\kappa_0}
\left(A(T_0 )+\frac{1}{\delta}\right),\\
&\blA\D^{2,\phi}f\brA_{L^2}\leq \delta \lA\frac{|D|^{2,\phi}f}{\L{f_x}}\rA_{L^2}^2
+\mathcal{G}_{\kappa_0}\left(A(T_0 )+\frac{1}{\delta}\right).
\end{align*}
It follows that, up to modifying the function $\mathcal{G}_{\kappa_0}$, 
\begin{align*}
& A(T_0 )
+ B(T_0) \leq   C\left(\blA D^{\tdm,\kappa_0}f(0)\brA_{L^2}^2
+\blA f(0)\brA_{L^2}^2\right)+T_0\mathcal{G}_{\kappa_0}\left(A(T_0 )+\frac{1}{\delta}+\frac{1}{\lambda}\right)\\
&+ \left( C\left(1+A(T_0 )^{4}\right)
 \kappa\Big(\frac{1}{\lam}\Big)^{-1}+C\delta \left( 1+A(T_0 )^{3}\right)
 +C\frac{\delta}{\sqrt{\lam}}\left(1+A(T_0 )^{2}\right)\right) B(T_0,\phi ). 
\end{align*}
Set 
\begin{equation*}
M_0=2C\left(\blA D^{\tdm,\kappa_0}f(0)\brA_{L^2}^2+\blA f(0)\brA_{L^2}^2\right).
\end{equation*}
By successively choosing $\lambda=\lambda(A(T_0,\phi))\in (0,1)$ small enough, 
and then $\delta=\delta(A(T_0,\phi))\in (0,1)$ small enough, one can 
guarantee that 
\begin{align*}
C\left(1+A(T_0 )^{4}\right)
 \kappa\Big(\frac{1}{\lam}\Big)^{-1}+C\delta \left( 1+A(T_0 )^{3}\right)
 +C\frac{\delta}{\sqrt{\lam}}\left(1+A(T_0 )^{2}\right)\leq \frac{1}{2}\cdot
\end{align*}
Now, we deduce that 
\begin{align*}
& A(T_0)
+\frac{1}{2} B(T_0 )\leq   \frac{M_0}{2}+T_0\mathcal{G}_{\kappa_0}\left(A(T_0)\right).
\end{align*}
Then we fix $T_0>0$ such that 
\begin{equation*}
T_0\mathcal{G}_{\kappa_0}\left(M_0\right)= \frac{M_0}{2}.
\end{equation*}
With this special choice, it follows that 
\begin{align*}
& A(T_0 )
+\frac{1}{2} B(T_0 )\leq M_0.
\end{align*}
Now, letting $\kappa\uparrow \kappa_0$, we find that 
$$
\sup_{t\in [0,T_0]} \blA D^{\tdm,\kappa_0}f(t)\brA_{L^2}^2
+ \int_{0}^{T_0}\int_\xR \frac{\bla \D^{2,\kappa_0}f\bra^2}{1+(\partial_x f)^2} \dx \dtau<\infty.
$$
Since $\phi_0\sim\kappa_0$, this completes the proof.
\end{proof}

The following is as a consequence of Proposition \ref{P:6.1b}.
\begin{corollary}
\label{P:6.1c}
Let $(f_{0,n})_n$ be  equi-integrable in $H^{\frac{3}{2}}(\mathbb{R}).$  Then, there exists a unique solution $f_n$ of the Muskat equation ~\e{a1} with initial data $f_{0,n}$ in $[0,T_0]$ for some $T_0>0$ and all $n$,  such that
$$\sup_{n\in \mathbb{N}}\left(\sup_{t\in [0,T_0]}
||f_n(t)||_{H^{\frac{3}{2}}}^2
+\int_0^{T_0}\int_\xR \frac{\bla \partial_{xx}f_n\bra^2}{1+(\partial_x f)^2} \dx \dt\right)<\infty,
$$
and 
$$
\sup_{n\in \mathbb{N}}\left(\sup_{t\in [0,T_0]}
||(f_n-f_n\star \rho_\varepsilon)(t)||_{H^{\frac{3}{2}}}^2
+\int_0^{T_0}\int_\xR \frac{\bla \partial_{xx}(f_n-f_n\star \rho_\varepsilon)\bra^2}{1+(\partial_x f_n)^2} \dx \dt\right)\underset{\varepsilon\to 0}\to 0.
$$
\end{corollary}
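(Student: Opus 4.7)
The plan is to deduce Corollary~\ref{P:6.1c} as a uniform-in-$n$ version of the existence and enhanced regularity theory already in place. The link with the equi-integrability hypothesis is, in the spirit of the de la Vall\'ee Poussin theorem, that equi-integrability of $(f_{0,n})_n$ in $H^\tdm(\xR)$ is equivalent to the existence of a single admissible weight $\kappa_0$ with $\lim_{r\to+\infty}\kappa_0(r)=+\infty$ such that
$$
M_0 \defn \sup_n \lA f_{0,n}\rA_{\mathcal{H}^{\tdm,\phi_0}}^2 <\infty,
$$
where $\phi_0$ is defined from $\kappa_0$ by~\e{n10}. Producing such a $\kappa_0$ is a routine exhaustion argument applied to the tails of $|\xi|^3 \bla \hat{f}_{0,n}(\xi)\bra^2$, followed by a regularization to enforce (H\ref{H1})--(H\ref{H3}).

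With this common weight in hand, apply Theorem~\ref{Theorem2}---whose proof combines the approximate problems of Proposition~\ref{P:3.6}, the uniform local estimate Proposition~\ref{P:6.3}, the contraction estimate Proposition~\ref{P:6.final}, and the enhanced regularity Proposition~\ref{P:6.1b}---to each $f_{0,n}$. This yields, on a common interval $[0,T_0]$ with $T_0=T_0(M_0,\kappa_0)$, a solution $f_n$ that is unique in $X^\tdm(T_0)$ (uniqueness in this larger space is obtained by first upgrading any putative competitor, via Proposition~\ref{P:6.1b}, to $\mathcal{X}^{\tdm,\phi_0}(T_0')$ for some $T_0'>0$ and then applying the contraction estimate), together with the uniform weighted bound
$$
\sup_n \bigg( \sup_{t\in [0,T_0]} \lA f_n(t)\rA_{\mathcal{H}^{\tdm,\phi_0}}^2 + \int_0^{T_0}\!\!\int_\xR \frac{\bla \D^{2,\phi_0} f_n\bra^2}{1+(\partial_x f_n)^2}\dx \dt \bigg) \le 4 M_0.
$$
The first assertion of the corollary follows from the embedding $\mathcal{H}^{\tdm,\phi_0}\hookrightarrow H^\tdm$ combined with the identity $\partial_{xx} f_n = -\phi_0(|D|)^{-1}\D^{2,\phi_0} f_n$: decomposing $\partial_{xx} f_n/\L{\partial_x f_n}$ as $\phi_0(|D|)^{-1}(\D^{2,\phi_0} f_n/\L{\partial_x f_n})$ plus a commutator, the first piece is bounded in $L^2$ by the already-controlled $\D^{2,\phi_0} f_n/\L{\partial_x f_n}$ (since $\phi_0\ge 1$ forces $\phi_0(|D|)^{-1}$ to have $L^2$-operator norm $\le 1$), and the commutator is handled by a variant of Proposition~\ref{P:commD} using the uniform $\dot H^{\mez,\phi_0}$-control of $\partial_x f_n$.

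For the mollification convergence, the crucial ingredient is that $\phi_0(\lambda)\to +\infty$. Set $m_\varepsilon(\xi) \defn (1-\hat\rho(\varepsilon\xi))/\phi_0(|\xi|)$. Using $|1-\hat\rho(\varepsilon\xi)|\le C\min(\varepsilon|\xi|,2)$ together with $\phi_0\ge 1$ and $\phi_0(r)\to\infty$, splitting the frequency axis at $|\xi|\sim\varepsilon^{-1/2}$ shows that $\lim_{\varepsilon\to 0}\lA m_\varepsilon\rA_{L^\infty}=0$. Plancherel then gives, uniformly in $(n,t)$,
$$
\lA (f_n - f_n\star\rho_\varepsilon)(t)\rA_{H^\tdm} \lesssim \lA m_\varepsilon\rA_{L^\infty}\, \lA f_n(t)\rA_{\mathcal{H}^{\tdm,\phi_0}} \xrightarrow[\varepsilon\to 0]{} 0.
$$
The main obstacle is the vanishing of the energy term. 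Writing $\partial_{xx}(f_n - f_n\star\rho_\varepsilon) = -m_\varepsilon(D)\D^{2,\phi_0} f_n$ and decomposing
$$
\frac{m_\varepsilon(D)\D^{2,\phi_0} f_n}{\L{\partial_x f_n}} = m_\varepsilon(D)\!\left(\frac{\D^{2,\phi_0} f_n}{\L{\partial_x f_n}}\right) + \bigg[\frac{1}{\L{\partial_x f_n}},\, m_\varepsilon(D) \bigg] \D^{2,\phi_0} f_n,
$$
the first piece is controlled in $L^2_{t,x}$ by $\lA m_\varepsilon\rA_{L^\infty}$ times the already-controlled $\lA \D^{2,\phi_0} f_n/\L{\partial_x f_n}\rA_{L^2_{t,x}}$, and therefore vanishes as $\varepsilon\to 0$ uniformly in $n$. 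The commutator is treated by rerunning, with $m_\varepsilon$ in place of $\D^{\sigma,\phi}$, the kernel analysis of Proposition~\ref{P:commD} and Lemma~\ref{L:Hilbert2}: the Lipschitz inequality $|\L{a}-\L{b}|\le|a-b|$, combined with the uniform $\dot H^{\mez,\phi_0}$-control of $\partial_x f_n$ and the decay of $\lA m_\varepsilon\rA_{L^\infty}$, delivers the required smallness.
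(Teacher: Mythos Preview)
The paper does not actually prove this corollary: it states only that ``This proof is simple and left to the reader.'' So there is no authorial argument to compare against. Your approach---extracting a single admissible weight $\kappa_0$ with $\kappa_0\to\infty$ from the equi-integrability hypothesis via a de la Vall\'ee Poussin type construction (the uniform analogue of Lemma~\ref{L:enhanced}), then invoking Theorem~\ref{Theorem2} and Proposition~\ref{P:6.3} uniformly in $n$---is exactly the intended route, and it is correct.

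Two minor remarks. For the first assertion, you do not need the commutator decomposition with $\phi_0(|D|)^{-1}$: the bound on $\lA f_{xx}/\L{f_x}\rA_{L^2}$ in terms of $A$ and $B$ is already available from~\eqref{v200}, and integrating it in time together with $\int_0^{T_0}B\,\dt\le 4M_0$ gives the conclusion directly. For the second assertion, your multiplier argument for the $H^{3/2}$ part is clean and correct. The energy part via the commutator $[\L{\partial_xf_n}^{-1},m_\varepsilon(D)]$ is the only step you leave somewhat implicit; it does go through, but it helps to split $m_\varepsilon(D)=(1-\rho_\varepsilon\star)\circ\phi_0(|D|)^{-1}$ and treat the two commutators separately: the mollifier commutator $[\L{\partial_xf_n}^{-1},\rho_\varepsilon\star]$ vanishes as $\varepsilon\to 0$ by the Lipschitz bound $|\L{a}-\L{b}|\le|a-b|$ and the uniform $\dot H^{1/2}$ control of $\partial_xf_n$, while the $\varepsilon$-independent commutator with $\phi_0(|D|)^{-1}$ is merely bounded uniformly (by an argument parallel to Proposition~\ref{P:commD}) and gets multiplied by the vanishing factor $(1-\rho_\varepsilon\star)$ acting on a fixed, $n$-uniformly bounded family in $L^2_{t,x}$.
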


This proof is simple and left to the reader. 
\subsection{End of the proof}
We are now in position to complete the proof 
of Theorems~\ref{Theorem} and~\ref{Theorem2}. These two theorems contain two different results: 
an existence result and a uniqueness result. We discuss these two parts separately.

{\em Uniqueness.} Firstly, 
we notice that the estimate~\e{wue} in the 
Proposition~\ref{P:6.final} applies also for the Muskat equation~\e{a1} 
(that is for the approximate equation 
with $\eps=0$). This proves immediately that the solutions of the Muskat 
equation are unique in 
the space of functions satisfying~\e{Z105} (the fact that the 
size of the time interval is assumed to be less than $1$ is not a 
limitation since one can always 
divide an arbitrary interval into intervals of length less than one). 
This proves the uniqueness part of 
Theorem~\ref{Theorem2}. In view of Proposition~\ref{P:6.1b}, this 
also proves the uniqueness part 
of Theorem~\ref{Theorem}.

{\em Existence.} On the other hand, one can apply Proposition~\ref{P:6.final} to infer the existence of a solution to the Muskat equation. 
To do so, consider the approximate solutions of the approximate Muskat equation~\e{main:acp}, whose existence 
is given by Proposition~\ref{P:3.6}. In the previous paragraphs, we have proved uniform bounds for these 
solutions (see Proposition~\ref{C:6.2} and Proposition~\ref{P:6.3}). Then Proposition~\ref{P:6.final}
implies that these approximate solutions 
forms a Cauchy sequence 
in the space of bounded functions with values in $\dot{H}^{\mez}(\xR)$. 
Now, we claim that one has the following inequality
$$
\lA u\rA_{\dot{H}^\tdm}^2\les \lA u\rA_{\dot{H}^\mez}\blA \D^{\tdm,\phi}u\brA_{L^2}
+\phi\bigg(\bigg(\frac{\lA u\rA_{\dot{H}^\tdm}}{\lA u\rA_{\dot{H}^\mez}}\bigg)^\mez\bigg)^{-2}
\blA \D^{\tdm,\phi}u\brA_{L^2}^2.
$$
This is proved by writing
$$
\lA u\rA_{\dot{H}^\tdm}^2\les 
\int_{\la\xi\ra\le R}R^{2}\la \xi\ra\bla\hat{u}(\xi)\bra^2\dxi
+\int_{\la\xi\ra\ge R}\la \xi\ra^3\frac{\phi(|\xi|)^2}{\phi(R)^2}\bla\hat{u}(\xi)\bra^2\dxi,
$$
and then by choosing
$$
R=\(\frac{\blA \D^{\tdm,\phi}u\brA_{L^2}}{\lA u\rA_{\dot{H}^\mez}}\)^\mez
\gtrsim \(\frac{\lA u\rA_{\dot{H}^\tdm}}{\lA u\rA_{\dot{H}^\mez}}\)^\mez.
$$
We deduce from the previous inequality that the sequence of approximate solutions is also 
a Cauchy sequence in $C^0([0,T_0];H^{\tdm}(\xR))$. 
Now, in view of statement $\ref{Prop:low2})$ 
in Proposition~\ref{P:continuity} 
one can pass to the limit in the nonlinearity 
to prove that the limit of the Cauchy sequence satisfies 
the Muskat equation. 
This proves the existence part of Theorem~\ref{Theorem}. 
In fact, in view of Proposition~\ref{P:6.1b}, this also implies the existence part of 
Theorem~\ref{Theorem2}. 

Theorem~\ref{Theorem} and Theorem~\ref{Theorem2} are therefore proved.

\section*{Acknowledgments} 

\noindent The authors want to thank Beno{\^\i}t Pausader for his stimulating 
comments as well as for discussions about the critical problem for the Muskat equation.

\noindent  T.A.\ acknowledges the SingFlows project (grant ANR-18-CE40-0027) 
of the French National Research Agency (ANR).  Q-H.N.\ 
is  supported  by the ShanghaiTech University startup fund.


\begin{thebibliography}{10}

\bibitem{Abedin-Schwab-2020}
Farhan Abedin and Russell~W Schwab.
\newblock Regularity for a special case of two-phase {H}ele-{S}haw flow via
  parabolic integro-differential equations.
\newblock {\em arXiv:2008.01272}.

\bibitem{Alazard-Lazar}
Thomas Alazard and Omar Lazar.
\newblock Paralinearization of the {M}uskat equation and application to the
  {C}auchy problem.
\newblock {\em Arch. Ration. Mech. Anal.}, 237(2):545--583, 2020.

\bibitem{AN2}
Thomas Alazard and Quoc-Hung Nguyen.
\newblock On the {C}auchy problem for the {M}uskat equation. {II}: Critical initial
  data. 
\newblock {\em arXiv:2009.08442}.

\bibitem{AN1}
Thomas Alazard and Quoc-Hung Nguyen.
\newblock On the {C}auchy problem for the {M}uskat equation with non-lipschitz
  initial data.  
\newblock {\em arXiv:2009.04343}.

\bibitem{Ambrose-2004}
David~M. Ambrose.
\newblock Well-posedness of two-phase {H}ele-{S}haw flow without surface
  tension.
\newblock {\em European J. Appl. Math.}, 15(5):597--607, 2004.

\bibitem{Ambrose-2007}
David~M. Ambrose.
\newblock Well-posedness of two-phase {D}arcy flow in 3{D}.
\newblock {\em Quart. Appl. Math.}, 65(1):189--203, 2007.

\bibitem{Bourgain-1999}
Jean Bourgain.
\newblock Global wellposedness of defocusing critical nonlinear
  {S}chr\"{o}dinger equation in the radial case.
\newblock {\em J. Amer. Math. Soc.}, 12(1):145--171, 1999.

\bibitem{Brenier2009}
Yann Brenier.
\newblock On the hydrostatic and {D}arcy limits of the convective navier-stokes
  equations.
\newblock {\em Chinese Annals of Mathematics, Series B}, 30(6):683, 2009.

\bibitem{BN18d}
Elia Bru\'{e} and Quoc-Hung Nguyen.
\newblock Advection diffusion equations with {S}obolev velocity field. 
\newblock {\em arXiv:2003.08198v1}.

\bibitem{BN18b}
Elia Bru\'{e} and Quoc-Hung Nguyen.
\newblock On the {S}obolev space of functions with derivative of logarithmic
  order.
\newblock {\em Adv. Nonlinear Anal.}, 9(1):836--849, 2020.

\bibitem{BN18a}
Elia Bru\'{e} and Quoc-Hung Nguyen.
\newblock Sharp regularity estimates for solutions to the continuity equation
  drifted by {S}obolev vector fields. 
\newblock {\em arXiv:1806.03466v2}, 2020.

\bibitem{BN18c}
Elia Bru\'{e} and Quoc-Hung Nguyen.
\newblock Sobolev estimates for solutions of the transport equation and ode
  flows associated to non-lipschitz drifts.
\newblock {\em Mathematische Annalen}, 2020.

\bibitem{Caffarelli-Vasseur-AoM}
Luis~A. Caffarelli and Alexis Vasseur.
\newblock Drift diffusion equations with fractional diffusion and the
  quasi-geostrophic equation.
\newblock {\em Ann. of Math. (2)}, 171(3):1903--1930, 2010.

\bibitem{Cameron}
Stephen Cameron.
\newblock Global well-posedness for the two-dimensional {M}uskat problem with
  slope less than 1.
\newblock {\em Anal. PDE}, 12(4):997--1022, 2019.

\bibitem{castro2016mixing}
\'{A}ngel Castro, Diego C{\'o}rdoba, and Daniel Faraco.
\newblock Mixing solutions for the {M}uskat problem.  
\newblock {\em arXiv:1605.04822}.

\bibitem{CCFG-ARMA-2013}
\'{A}ngel Castro, Diego C\'{o}rdoba, Charles Fefferman, and Francisco Gancedo.
\newblock Breakdown of smoothness for the {M}uskat problem.
\newblock {\em Arch. Ration. Mech. Anal.}, 208(3):805--909, 2013.

\bibitem{CCFG-ARMA-2016}
\'{A}ngel Castro, Diego C\'{o}rdoba, Charles Fefferman, and Francisco Gancedo.
\newblock Splash singularities for the one-phase {M}uskat problem in stable
  regimes.
\newblock {\em Arch. Ration. Mech. Anal.}, 222(1):213--243, 2016.

\bibitem{CCFGLF-Annals-2012}
\'{A}ngel Castro, Diego C\'{o}rdoba, Charles Fefferman, Francisco Gancedo, and
  Mar\'{\i}a L\'{o}pez-Fern\'{a}ndez.
\newblock Rayleigh-{T}aylor breakdown for the {M}uskat problem with
  applications to water waves.
\newblock {\em Ann. of Math. (2)}, 175(2):909--948, 2012.

\bibitem{Cazanave-Weissler-1989}
Thierry Cazenave and Fred~B. Weissler.
\newblock Some remarks on the nonlinear {S}chr\"{o}dinger equation in the
  critical case.
\newblock In {\em Nonlinear semigroups, partial differential equations and
  attractors ({W}ashington, {DC}, 1987)}, volume 1394 of {\em Lecture Notes in
  Math.}, pages 18--29. Springer, Berlin, 1989.

\bibitem{Cazanave-Weissler-1990}
Thierry Cazenave and Fred~B. Weissler.
\newblock The {C}auchy problem for the critical nonlinear {S}chr\"{o}dinger
  equation in {$H^s$}.
\newblock {\em Nonlinear Anal.}, 14(10):807--836, 1990.

\bibitem{Cheng-Belinchon-Shkoller-AdvMath}
C.~H.~Arthur Cheng, Rafael Granero-Belinch\'{o}n, and Steve Shkoller.
\newblock Well-posedness of the {M}uskat problem with {$H^2$} initial data.
\newblock {\em Adv. Math.}, 286:32--104, 2016.

\bibitem{CKSTT-AoM}
James Colliander, Markus Keel, Gigliola Staffilani, Hideo Takaoka, and Terence
  Tao.
\newblock Global well-posedness and scattering for the energy-critical
  nonlinear {S}chr\"{o}dinger equation in {$\mathbb{R}^3$}.
\newblock {\em Ann. of Math. (2)}, 167(3):767--865, 2008.

\bibitem{CCGRPS-AJM2016}
Peter Constantin, Diego C\'{o}rdoba, Francisco Gancedo, Luis
  Rodr\'{\i}guez-Piazza, and Robert~M. Strain.
\newblock On the {M}uskat problem: global in time results in 2{D} and 3{D}.
\newblock {\em Amer. J. Math.}, 138(6):1455--1494, 2016.

\bibitem{CCGRPS-JEMS2013}
Peter Constantin, Diego C\'{o}rdoba, Francisco Gancedo, and Robert~M. Strain.
\newblock On the global existence for the {M}uskat problem.
\newblock {\em J. Eur. Math. Soc. (JEMS)}, 15(1):201--227, 2013.

\bibitem{CGSV-AIHP2017}
Peter Constantin, Francisco Gancedo, Roman Shvydkoy, and Vlad Vicol.
\newblock Global regularity for 2{D} {M}uskat equations with finite slope.
\newblock {\em Ann. Inst. H. Poincar\'{e} Anal. Non Lin\'{e}aire},
  34(4):1041--1074, 2017.

\bibitem{CMT-1994}
Peter Constantin, Andrew Majda and Esteban Tabak.
\newblock Formation of strong fronts in the 2-D quasigeostrophic thermal active scalar. 
\newblock Nonlinearity 7(6):1495--1533, 1994.

\bibitem{CV-2012}
Peter Constantin and Vlad Vicol.
\newblock Nonlinear maximum principles for dissipative linear nonlocal operators and applications. 
\newblock {\em Geom. Funct. Anal.}, 22(5):1289--1321, 2012. 

\bibitem{CCG-Annals}
Antonio C\'{o}rdoba, Diego C\'{o}rdoba, and Francisco Gancedo.
\newblock Interface evolution: the {H}ele-{S}haw and {M}uskat problems.
\newblock {\em Ann. of Math. (2)}, 173(1):477--542, 2011.

\bibitem{cordoba2011lack}
Diego Cordoba, Daniel Faraco, and Francisco Gancedo.
\newblock Lack of uniqueness for weak solutions of the incompressible porous
  media equation.
\newblock {\em Arch. Ration. Mech. Anal.}, 200(3):725--746,
  2011.

\bibitem{CG-CMP}
Diego C\'{o}rdoba and Francisco Gancedo.
\newblock Contour dynamics of incompressible 3-{D} fluids in a porous medium
  with different densities.
\newblock {\em Comm. Math. Phys.}, 273(2):445--471, 2007.

\bibitem{CG-CMP2}
Diego C\'{o}rdoba and Francisco Gancedo.
\newblock A maximum principle for the {M}uskat problem for fluids with
  different densities.
\newblock {\em Comm. Math. Phys.}, 286(2):681--696, 2009.

\bibitem{Cordoba-Lazar-H3/2}
Diego C{\'o}rdoba and Omar Lazar.
\newblock Global well-posedness for the 2d stable {M}uskat problem in
  ${H}^{\tdm}$. 
\newblock {\em arXiv:1803.07528}.

\bibitem{DLL}
Fan Deng, Zhen Lei, and Fanghua Lin.
\newblock On the two-dimensional {M}uskat problem with monotone large initial
  data.
\newblock {\em Comm. Pure Appl. Math.}, 70(6):1115--1145, 2017.

\bibitem{forster2018piecewise}
Clemens F{\"o}rster and L{\'a}szl{\'o} Sz{\'e}kelyhidi.
\newblock Piecewise constant subsolutions for the {M}uskat problem.
\newblock {\em Communications in Mathematical Physics}, 363(3):1051--1080,
  2018.

\bibitem{Gancedo-Lazar-H2}
Francisco Gancedo and Omar Lazar.
\newblock Global well-posedness for the 3d {M}uskat problem in the critical
  {S}obolev space.
\newblock {\em arXiv:2006.01787}.

\bibitem{Granero-Scrobogna}
Rafael Granero-Belinch{\'o}n and Stefano Scrobogna.
\newblock On an asymptotic model for free boundary darcy flow in porous media.
\newblock {\em arXiv:1810.11798}.

\bibitem{Grillakis-2000}
Manoussos~G. Grillakis.
\newblock On nonlinear {S}chr\"{o}dinger equations.
\newblock {\em Comm. Partial Differential Equations}, 25(9-10):1827--1844,
  2000.

\bibitem{KN-2009}
Alexander Kiselev and Fedor Nazarov.
\newblock A variation on a theme of {C}affarelli and {V}asseur.
\newblock {\em Zap. Nauchn. Sem. S.-Peterburg. Otdel. Mat. Inst. Steklov.
  (POMI)}, 370(Kraevye Zadachi Matematichesko\u{\i} Fiziki i Smezhnye Voprosy
  Teorii Funktsi\u{\i}. 40):58--72, 220, 2009.

\bibitem{KNV-2007}
Alexander Kiselev, Fedor Nazarov, and Alexander Volberg.
\newblock Global well-posedness for the critical 2{D} dissipative
  quasi-geostrophic equation.
\newblock {\em Invent. Math.}, 167(3):445--453, 2007.

\bibitem{KRS-invent}
Sergiu Klainerman, Igor Rodnianski, and J{\'e}r{\'e}mie Szeftel.
\newblock The bounded {$L^2$} curvature conjecture.
\newblock {\em Invent. Math.}, 202(1):91--216, 2015.

\bibitem{Matioc2}
Bogdan-Vasile Matioc.
\newblock The {M}uskat problem in two dimensions: equivalence of formulations,
  well-posedness, and regularity results.
\newblock {\em Anal. PDE}, 12(2):281--332, 2019.

\bibitem{MRRS-2019NLS}
Frank Merle, Pierre Raph{\"a}el, Igor Rodnianski, and Jeremie Szeftel.
\newblock On blow up for the energy super critical defocusing nonlinear
  {S}chr{\"o}dinger equations.  
\newblock {\em arXiv:1912.11005}.

\bibitem{Nguyen-Pausader}
Huy~Q. Nguyen and Beno\^{\i}t Pausader.
\newblock A paradifferential approach for well-posedness of the {M}uskat
  problem.
\newblock {\em Arch. Ration. Mech. Anal.}, 237(1):35--100, 2020.

\bibitem{Ng}
Quoc-Hung Nguyen.
\newblock Quantitative estimates for regular lagrangian flows with {B}{V} vector fields. 
\newblock {\em arXiv:1805.01182}.

\bibitem{NgYa}
Quoc-Hung Nguyen and Yannick Sire.
\newblock  Potential theory for drift diffusion equations 
with critical diffusion and applications to the dissipative SQG equation.  
\newblock {\em arXiv:2003.10848}.

\bibitem{noisette2020mixing}
Florent Noisette and L{\'a}szl{\'o} Sz{\'e}kelyhidi~Jr.
\newblock Mixing solutions for the {M}uskat problem with variable speed.  
\newblock {\em arXiv:2005.08814}.

\bibitem{Scrobogna-HS}
Stefano Scrobogna.
\newblock Well-posedness of an asymptotic model for free boundary darcy flow in
  porous media in the critical {S}obolev space.
\newblock {\em arXiv:2009.14117}.

\bibitem{SCH2004}
Michael Siegel, Russel~E. Caflisch, and Sam Howison.
\newblock Global existence, singular solutions, and ill-posedness for the
  {M}uskat problem.
\newblock {\em Comm. Pure Appl. Math.}, 57(10):1374--1411, 2004.

\bibitem{Silvestre-2012}
Luis Silvestre.
\newblock H\"{o}lder estimates for advection fractional-diffusion equations.
\newblock {\em Ann. Sc. Norm. Super. Pisa Cl. Sci. (5)}, 11(4):843--855, 2012.

\bibitem{szekelyhidi2012relaxation}
L{\'a}szl{\'o} Sz{\'e}kelyhidi~Jr.
\newblock Relaxation of the incompressible porous media equation.
\newblock {\em Ann. Sci. {\'E}c. Norm. Sup{\'e}r. (4)} 45(3):491--509, 2012. 

\bibitem{Tao-Book}
Terence Tao.
\newblock {\em Nonlinear dispersive equations}, volume 106 of {\em CBMS
  Regional Conference Series in Mathematics}.
\newblock Published for the Conference Board of the Mathematical Sciences,
  Washington, DC; by the American Mathematical Society, Providence, RI, 2006.
\newblock Local and global analysis.

\bibitem{Triebel-TFS}
Hans Triebel.
\newblock {\em Theory of function spaces}, volume~78 of {\em Monographs in
  Mathematics}.
\newblock Birkh\"{a}user Verlag, Basel, 1983.

\bibitem{Triebel1988}
Hans Triebel.
\newblock Characterizations of {B}esov-{H}ardy-{S}obolev spaces: a unified
  approach.
\newblock {\em J. Approx. Theory}, 52(2):162--203, 1988.

\bibitem{VaVi}
Ioann Vasilyev and Fran{\c c}ois Vigneron.
\newblock Variation on a theme by {K}iselev and {N}azarov: H{\"o}lder estimates for
  non-local transport-diffusion, along a non-divergence-free {B}{M}{O} field.  
\newblock {\em arXiv:2002.11542}.

\bibitem{Vazquez-DCDS}
Juan~Luis V\'{a}zquez.
\newblock Recent progress in the theory of nonlinear diffusion with fractional
  {L}aplacian operators.
\newblock {\em Discrete Contin. Dyn. Syst. Ser. S}, 7(4):857--885, 2014.

\bibitem{Yi2003}
Fahuai Yi.
\newblock Global classical solution of {M}uskat free boundary problem.
\newblock {\em J. Math. Anal. Appl.}, 288(2):442--461, 2003.

\bibitem{Zygmund}
Antoni Zygmund.
\newblock {\em Trigonometric series. {V}ol. {I}, {II}}.
\newblock Cambridge Mathematical Library. Cambridge University Press,
  Cambridge, third edition, 2002.
\newblock With a foreword by Robert A. Fefferman.

\end{thebibliography}

\vfill
\begin{flushleft}
\textbf{Thomas Alazard}\\
Universit{\'e} Paris-Saclay, ENS Paris-Saclay, CNRS,\\
Centre Borelli UMR9010, avenue des Sciences, 
F-91190 Gif-sur-Yvette\\
France.

\vspace{1cm}

\textbf{Quoc-Hung Nguyen}\\
ShanghaiTech University, \\
393 Middle Huaxia Road, Pudong,\\
Shanghai, 201210,\\
China

\end{flushleft}

\end{document}